\makeatletter \@addtoreset{equation}{section} \makeatother
\renewcommand\thefigure{\thesection.\@arabic\c@figure}
\renewcommand\thetable{\thesection.\@arabic\c@table}
\newtheorem{theorem}{Theorem}[section]
\newtheorem{lemma}[theorem]{Lemma}
\newtheorem{proposition}[theorem]{Proposition}
\newtheorem{corollary}[theorem]{Corollary}
\newtheorem{remark}[theorem]{Remark}
\newcommand{\mc}[1]{{\mathcal #1}}
\newcommand{\ms}[1]{{\mathscr #1}}
\newcommand{\mf}[1]{{\mathfrak #1}}
\newcommand{\mb}[1]{{\mathbf #1}}
\newcommand{\bb}[1]{{\mathbb #1}}
\newcommand{\bs}[1]{{\boldsymbol #1}}
\newcommand{\<}{\langle}
\renewcommand{\>}{\rangle}
\renewcommand{\Cap}{{\rm cap}}
\title{A Martingale approach to metastability}
\author{J. Beltr\'an, C. Landim}
\address{\noindent IMCA, Calle los Bi\'ologos 245, Urb. San C\'esar
  Primera Etapa, Lima 12, Per\'u and PUCP, Av. Universitaria cdra. 18,
  San Miguel, Ap. 1761, Lima 100, Per\'u. 
\newline e-mail: \rm
  \texttt{johel.beltran@pucp.edu.pe} }
\address{\noindent IMPA, Estrada Dona Castorina 110, CEP 22460 Rio de
  Janeiro, Brasil and CNRS UMR 6085, Universit\'e de Rouen, Avenue de
  l'Universit\'e, BP.12, Technop\^ole du Madril\-let, F76801
  Saint-\'Etienne-du-Rouvray, France.  \newline e-mail: \rm
  \texttt{landim@impa.br} }
\begin{document}

\keywords{Metastability, Mixing times, Markov processes.} 

\begin{abstract}
  We presented in \cite{bl2,bl7} an approach to derive the metastable
  behavior of continuous-time Markov chains. We assumed in these
  articles that the Markov chains visit points in the time scale in
  which it jumps among the metastable sets. We replace this condition
  here by assumtpions on the mixing times and on the relaxation times
  of the chains reflected at the boundary of the metastable sets.
\end{abstract}

\maketitle

\section{Introduction}
\label{sec-1}


Cassandro et al. proposed in a seminal paper \cite{cgov} a general
method to derive the metastable behavior of continuous-time Markov
chains with exponentially small jump rates, called the pathwise
approach. In many different contexts these ideas permitted to prove
that the exit time from a metastable set has an asymptotic exponential
law; to provide estimates for the expectations of the exit times; to
describe the typical escape trajectory from a metastable set; to
compute the distribution of the exit (saddle) points from a metastable
set; and to prove the convergence of the finite-dimensional
distributions of the order parameter, the macroscopic variable which
characterizes the state of the process, to the finite-dimensional
distributions of a finite-state Markov chain. This approach has known
a great success, and it is impossible to review here the main results.
We refer to \cite{ov1} for a recent account of this theory.

In \cite{begk1, begk2}, Bovier et al. proposed a new approach to prove
the metastable behavior of continuous-time Markov chains, known as the
potential theoretic approach. Motivated by the dynamics of mean field
spin systems, the authors created tools, based on the potential theory
of reversible Markov processes, to compute the expectation of the exit
time from a metastable set and to prove that these exit times are
asymptotically exponential. They also expressed the expectation of the
exit time from a metastable set and the jump probabilities among the
metastable sets in terms of eigenvalues and right-eigenvectors of the
generator of the Markov chain.

Compared to the pathwise approach, the potential theoretic approach
does not attempt to describe the typical exit path from a metastable
set, but provides precise asymptotic formulas for the expectation of
the exit time from a metastable set. This accuracy, not reached by the
pathwise approach, whose estimates admit exponential errors in the
parameter, permits to encompass in the theory dynamics which present
logarithmic energy or entropy barriers such as \cite{cmt, bl3,
  clmst}. Moreover, in the case of a transition from a metastable set
to a stable set, it characterizes the asymptotic dynamics: the process
remains at the metastable set an exponential time whose mean has been
estimated sharply and then it jumps to the stable set.

As the pathwise approach, the potential theoretic approach has been
succesfully applied to a great number of models. We refer to the
recently published paper \cite{bbi1} for references.

Inspired by the evolution of sticky zero-range processes
\cite{bl3,l1}, dynamics which have a finite number of stable sets with
logarithmic energy barriers, we proposed in \cite{bl2, bl7} a third
approach to metastability, now called the martingale approach. This
method was succesfully applied to derive the asymptotic behavior of
the condensate in sticky zero-range processes \cite{bl3,l1}, to prove
that in the ergodic time scale random walks among random traps
\cite{jlt1,jlt2} converge to $K$-processes, and to show that the
evolution among the ground states of the Kawasaki dynamics for the two
dimensional Ising lattice gas \cite{bl5, gl5} on a large torus
converges to a Brownian motion as the temperature vanishes.

To depict the asymptotic dynamics of the order parameter, one has to
compute the expectation of the holding times of each metastable set
and the jump probabilities amid the mestastable sets.  The potential
theoretic approach permits to compute the expectations of the holding
times and yields a formula for the jump probabilities in terms of
eigenvectors of the generator. This latter formula, although
interesting from the theoretical point of view, since it establishes a
link between the spectral properties of the generator and the
metastable behavior of the process, is of little pratical use because
one is usually unable to compute the eigenvectors of the generator.

The martingale approach replaces the formula of the jump probabilities
written through eigenvectors of the generator by one, \cite[Remark 2.9
and Lemma 6.8]{bl2}, expressed only in terms of the capacities,
capacities which can be estimated using the Dirichlet and the Thomson
variational principles. We have, therefore, a precise description of
the asymptotic dynamics of the order parameter: a sharp estimate of
the holding times at each metastable set from the potential
theoretical approach, and an explicit expression for the jump
probabilities among the metastable sets from the aforementioned
formula.

This informal description of the asymptotic dynamics of the order
parameter among the metastable sets has been converted in \cite{bl2,
  bl7} into a theorem which asserts that the order parameter converges
to a Markov chain in a topology introduced in \cite{jlt2}, weaker than
the Skorohod one. The proof of this result relies on three hypotheses,
formulated in terms of the stationary measure and of the capacities
between sets, and it uses the martingale characterization of a
Markovian dynamics and the notion of the trace of a Markov process on
a subset of the configuration space.

In the martingale approach, the potential theory tools developped by
Bovier et al. \cite{{begk1, begk2}} to prove the metastability of
Markov chains can be very useful in some models \cite{bl3,
  l1} or not needed at all, as in \cite{jlt1, jlt2}. In these latter
dynamics, the asymptotic jump probabilities among the metastable sets,
which, as we said, can be expressed through capacities, are estimated
by other means without reference to potential theory.

The proof of the convergence of the order parameter to a Markov chain
presented in \cite{bl2, bl7} requires that in each metastable set the
time it takes for the process to visit a representative configuration
of the metastable set is small compared to the time the process stays
in the metastable set. We introduced in \cite{bl2} a condition,
expressed in terms of capacities, which guarantees that a
representative point of the metastable set is visited before the
process reaches another metastable set. This quite strong assumption,
fulfilled by a large class of dynamics, fails in some cases, as in
polymer models in the depinned phase \cite{cmt, clmst} or in the dog
graph \cite{sc1}. The main goal of this article is to weaken this
assumption.

More recently, Bianchi and Gaudilli\`ere \cite{bg11} proposed still
another approach based on the fact that the exit time from a set
starting from the quasi-stationary measure associated to this set is
an exponential random variable. The proof that the exit time from a
metastable set is asymptotically exponential is thus reduced to the
proof that the state of the process gets close to the quasi-stationary
state before the process leaves the metastable set. To derive this
property the authors obtained estimates on the mixing time towards the
quasi-stationary state and on the asymptotic exit distribution with
errors expressed in terms of the ratio between the spectral radius of
the generator of the process killed when it leaves the metastable set
and the spectral gap of the process reflected at the boundary of the
metastable set, a ratio which has to be small if a metastable behavior
is expected. They also introduced $(\kappa,\lambda)$-capacities, an
object which plays an important role in this article.  \smallskip

After these historical remarks, we present the main results of this
article. Consider a sequence of continuous-time Markov chains
$\eta^N(t)$. To describe the asymptotic evolution of the dynamics
among the metastable sets, let $X^N_t$ be the functional of the
process which indicates the current metastable set visited:
\begin{equation*}
X^N_t \;=\; \sum_{x=1}^\kappa x\, \mb 1\{\eta^N(t) \in \mc E^x_N\}\;.
\end{equation*}
In this formula, $\kappa$ represents the number of metastable sets and
$\mc E^x_N$, $1\le x\le \kappa$, the metastable sets. The
non-Markovian dynamics $X^N_t$ is called the order process or the
the order in short.

The main result of \cite{bl2,bl7} states that under certain
conditions, which can be expressed only in terms of the stationary
measure and of the capacities between the metastable sets, the order
converges in some time scale and in some topology to a Markov process
on $S=\{1, \dots, \kappa\}$.

The main drawback of the method \cite{bl2,bl7} is that it requires the
process to visit points. More precisely, we needed to assume that each
metastable set $\ms E^x_N$ contains a configuration $\xi^x_N$ which,
once the process enters $\ms E^x_N$, is visited before the process
reaches another metastable set:
\begin{equation}
\label{00}
\lim_{N\to\infty} \sup_{\eta\in \ms E^x_N} \bb P_{\eta} \big[ \, H_{
\breve{\ms E}^{x}_N}<H_{\xi^x_N} \,\big]\;=\; 0
\end{equation}
for all $x\in S$.  Here, $H_A$, $A\subset E_N$, stands for the hitting
time of $A$, $\breve{\ms E}^{x}_N = \cup_{y\not =x} \ms E^{y}_N$, and
$\bb P_\eta$ represents the distribution of the process $\eta^N(t)$
starting from the configuration $\eta$. The configuration $\xi^x_N$ is
by no means special. It is shown in \cite{bl2} that if this property
holds for one configuration $\xi$ in $\ms E^x_N$, it holds for any
configuration in $\ms E^x_N$.

Property \eqref{00} is fulfilled by some dynamics, as sticky
zero-range processes \cite{bl3, l1}, trap models \cite{jlt1, jlt2} or
Markov processes on finite sets \cite{bl4, bl5}, but it is clearly not
fulfilled in general.
 
The purpose of this paper is to replace condition \eqref{00} by
assumptions on the relaxation time of the process reflected at the
boundary of a metastable set. We propose two different set of
hypotheses. The first set essentially requires only the spectral gap
of the process to be much smaller than the spectral gaps of the
reflected processes on each metastable set, and the average jump rates
among the metastable sets to converge when properly
renormalized. Under these conditions, Theorem \ref{s02} states that
the finite-dimensional distributions of the order process converge to
the finite-dimensional distributions of a finite state Markov chain,
provided the initial distribution is not too far from the equilibrium
measure.

On the other hand, if one is able to show that the mixing times of the
reflected processes on each metastable set are much smaller than the
relaxation time of the process, Theorem \ref{s00} and Lemma \ref{s15}
affirm that the order process converges to a finite state Markov
chain. Hence, the condition that the process visits points is replaced
in this article by estimates on the mixing times of the reflected
processes.

In Section \ref{sec5}, we apply these results to two models. We show
that the polymer in the depinned phase considered by Caputo et al. in
\cite{cmt, clmst} satisfy the first set of conditions and that the dog graph
introduced by Diaconis and Saloff-Coste \cite{sc1} satisfy the second
set of assumptions. H. Lacoin and A. Teixeira \cite{lt1} are presently
working on another polymer model in which the second set of conditions
can be verified.

\section{Notation and results}
\label{sec0}

Fix a sequence $(E_N: N\ge 1)$ of countable state spaces. The elements
of $E_N$ are denoted by the Greek letters $\eta$, $\xi$. For each
$N\ge 1$ consider matrix $R_N : E_N\times E_N \to \bb R$ such that
$R_N(\eta, \xi)\ge 0$, $\eta\not = \xi$, $-\infty < R_N(\eta, \eta)
<0$, $\sum_{\xi} R_N(\eta,\xi)=0$, $\eta\in E_N$.  Denote by $\{\eta^N
(t) : t\ge 0\}$ the right-continuous, continuous-time strong Markov
process on $E_N$ whose generator $L_N$ is given by
\begin{equation}
\label{c01}
(L_Nf) (\eta) \,=\, \sum_{\xi\in E_N} R_N(\eta,\xi)
\, \big\{f(\xi)-f(\eta)\big\}\;,
\end{equation}
for bounded functions $f:E_N\to \bb R$. We assume that $\eta^N(t)$ is
positive-recurrent and reversible. Denote by $\pi=\pi_N$ the unique
invariant probability measure, by $\lambda_N(\eta)$, $\eta\in E_N$,
the holding rates, $\lambda_N(\eta) = \sum_{\xi\not = \eta}
R_N(\eta,\xi)$, and by $p_N(\eta,\xi)$, $\eta,\xi\in E_N$, the jump
probabilities: $p_N(\eta,\xi) = \lambda_N(\eta)^{-1} \, R_N(\eta,\xi)$
for $\eta\not = \xi$, and $p_N(\eta,\eta)=0$ for $\eta\in E_N$. We
assume that $ p_N(\eta,\xi)$ are the transition probabilities of a
positive-recurrent discrete-time Markov chain. In particular the
measure $M_N(\eta) := \pi_N(\eta) \lambda_N(\eta)$ is finite.

Throughout this article we omit the index $N$ as much as possible. We
write, for instance, $\eta(t)$, $\pi$ for $\eta^N(t)$, $\pi_N$,
respectively. Denote by $D(\bb R_+,E_N)$ the space of right-continuous
trajectories with left limits endowed with the Skorohod topology. Let
$\bb P_{\eta} = \bb P^N_{\eta}$, $\eta\in E_N$, be the probability
measure on $D(\bb R_+,E_N)$ induced by the Markov process $\{\eta (t)
: t\ge 0\}$ starting from $\eta$. Expectation with respect to $\bb
P_{\eta}$ is denoted by $\bb E_{\eta}$.

For a subset $\ms A$ of $E_N$, denote by $H_{\ms A}$ the hitting time
of $\ms A$ and by $H^+_{\ms A}$ the return time to $\ms A$:
\begin{equation}
\label{71}
\begin{split}
& H^+_{\ms A} \,=\, \inf\{ t>0 : \eta (t) \in \ms A \,,\, 
\eta (s) \not= \eta (0) \;\;\textrm{for some $0< s <
  t$}\}\,,  \\
&\quad H_{\ms A} \,:=\, \inf \big\{ t > 0 : 
\eta (t) \in \ms A \big\}\,,   
\end{split}
\end{equation}
with the convention that $H_{\ms A} = \infty$, $H^+_{\ms A} = \infty$
if $\eta (s)\not\in \ms A$ for all $s>0$. We sometimes write $H(\ms
A)$ for $H_{\ms A}$. Denote by $\Cap_N (\ms A, \ms B)$ the capacity
between two disjoint subsets $\ms A$, $\ms B$ of $E_N$:
\begin{equation*}
\Cap_N (\ms A, \ms B) \;=\;
\sum_{\eta\in \ms A} \pi(\eta)\, \lambda(\eta)\, \bb P_{\eta} 
\big[ H_{\ms B} < H_{\ms A}^+ \big]\;.
\end{equation*}

Denote by $L^2(\pi)$ the space of square summable functions $f:E_N\to
\bb R$ endowed with the scalar product $\<f,g\>_\pi = \sum_{\eta\in
  E_N} \pi(\eta) f(\eta) g(\eta)$. Let $\mf g=\mf g_N$ be the spectral
gap of the generator $L_N$:
\begin{equation*}
\mf g \;=\; \inf_f \frac{\< (-L_N) f, f\>_{\pi}} {\<f,f\>_{\pi}} \;,
\end{equation*}
where the infimum is carried over all functions $f$ in $L^2(\pi)$
which are orthogonal to the constants: $\<f,1\>_\pi=0$.

Fix a finite number of disjoint subsets $\ms E^1_N, \dots, \ms
E^\kappa_N$, $\kappa\ge 2$, of $E_N$: $\ms E^x_N\cap \ms
E^y_N=\varnothing$, $x\neq y$. The sets $\ms E^x_N$ have to be
interpreted as wells for the Markov dynamics $\eta(t)$.  Let $\ms
E_N=\cup_{x\in S}\ms E^x_N$ and let $\Delta_N=E_N \setminus \ms E_N$
so that
\begin{equation}
\label{nv1}
E_N \,=\, \ms E^1_N\cup\dots \cup \ms E^{\kappa}_N
\cup\, \Delta_N\;. 
\end{equation}
In contrast with the wells $\ms E^x_N$, $\Delta_N$ is a set of small
measure which separates the wells.

\smallskip\noindent{\bf A. Trace process}. Denote by $\{\eta^{\ms E}
(t): t\ge 0\}$ the $\ms E_N$-valued Markov process obtained as the
trace of $\{\eta^N (t): t\ge 0\}$ on $\ms E_N$. We refer to
\cite[Section 6.1]{bl2} for a precise definition.  The rate at which
the trace process jumps from $\eta$ to $\xi\in \ms E_N$ is denoted by
$R^{\ms E}(\eta, \xi)$ and its generator by $L_{\ms E}$:
\begin{equation*}
(L_{\ms E} f)(\eta) \,=\, \sum_{\xi\in \ms E_N} R^{\ms  E}(\eta,\xi)
\, \big\{f(\xi)-f(\eta)\big\}\;, \quad \eta\in \ms  E_N\;.
\end{equation*}
By \cite[Proposition 6.3]{bl2}, the probability measure $\pi$
conditioned to $\ms E_N$, $\pi_{\ms E} (\eta) = \pi(\eta)/\pi(\ms
E_N)$, is reversible for the trace process.

Let $\bb P^{\ms E}_{\eta}$, $\eta\in \ms E_N$, be the probability
measure on $D(\bb R_+, \ms E_N)$ induced by the trace process
$\{\eta^{\ms E} (t) : t\ge 0\}$ starting from $\eta$. Expectation with
respect to $\bb P^{\ms E}_{\eta}$ is denoted by $\bb E^{\ms
  E}_{\eta}$.  Denote by $\mf g_{\ms E}$ the spectral gap of the
trace process:
\begin{equation*}
\mf g_{\ms E} \;=\; \inf_f \frac{\< (-L_{\ms E}) f, f\>_{\pi_{\ms E}}} 
{\<f,f\>_{\pi_{\ms E}}} \;,
\end{equation*}
where the infimum is carried over all functions $f$ in $L^2(\pi_{\ms
  E})$ which are orthogonal to the constants: $\<f,1\>_{\pi_{\ms
    E}}=0$.

Proposition \ref{s05} presents an estimate of the spectral gap of the
trace process in terms of the spectral gap of the original process.

\begin{proposition}
\label{s05}
Let $f$ be an eigenfunction associated to $\mf g$ such that
$E_{\pi}[f^2] = 1$, $E_{\pi}[f] = 0$. Then,
\begin{equation*}
\mf g_{\ms E} \, \Big\{ 1 - \frac 1{\pi(\ms E_N)}\,
E_{\pi} \big[f^2 \mb 1\{\ms E^c_N\}\big] \Big\} 
\;\le\; \mf g \;\le\; \mf g_{\ms E} \;. 
\end{equation*}
\end{proposition}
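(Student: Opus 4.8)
The plan is to derive both inequalities from the variational characterizations of $\mf g$ and $\mf g_{\ms E}$ together with the Dirichlet principle for the trace process. For $u:E_N\to\bb R$ and $g:\ms E_N\to\bb R$ write $D_N(u)=\<(-L_N)u,u\>_\pi$ and $D_{\ms E}(g)=\<(-L_{\ms E})g,g\>_{\pi_{\ms E}}$. The one structural input is the identity
\begin{equation*}
\pi(\ms E_N)\, D_{\ms E}(g) \;=\; \inf\big\{ D_N(u) : u:E_N\to\bb R,\ u=g \text{ on } \ms E_N\big\}\;,
\end{equation*}
proved in \cite{bl2}, the infimum being attained at the harmonic extension $h$ of $g$ (that is, $L_N h=0$ on $\Delta_N$, $h=g$ on $\ms E_N$); in particular $\pi(\ms E_N)\, D_{\ms E}(u|_{\ms E_N}) \le D_N(u)$ for every $u$. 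I would also use repeatedly that $\pi(\ms E_N) + \pi(\Delta_N) = 1$ and the Cauchy--Schwarz bound $\big(\sum_{\eta\in\Delta_N}\pi(\eta)v(\eta)\big)^2 \le \pi(\Delta_N)\sum_{\eta\in\Delta_N}\pi(\eta)v(\eta)^2$.

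\emph{Upper bound.} Fix $g\in L^2(\pi_{\ms E})$ with $\<g,1\>_{\pi_{\ms E}}=0$ and let $h$ be its harmonic extension, so that $h=g$ on $\ms E_N$ and $\sum_{\eta\in\ms E_N}\pi(\eta)g(\eta)=0$. Since $h$ need not be $\pi$-centered, inserting $h-E_\pi[h]$ into the variational formula for $\mf g$ gives
\begin{equation*}
\mf g\,\big( E_\pi[h^2] - E_\pi[h]^2\big) \;\le\; D_N(h) \;=\; \pi(\ms E_N)\, D_{\ms E}(g)\;.
\end{equation*}
Because $E_\pi[h]=\sum_{\eta\in\Delta_N}\pi(\eta)h(\eta)$, Cauchy--Schwarz together with $\pi(\Delta_N)\le 1$ yields $E_\pi[h]^2 \le \sum_{\eta\in\Delta_N}\pi(\eta)h(\eta)^2$, hence $E_\pi[h^2]-E_\pi[h]^2 \ge \sum_{\eta\in\ms E_N}\pi(\eta)h(\eta)^2 = \pi(\ms E_N)\,\<g,g\>_{\pi_{\ms E}}$. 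Dividing through, $\mf g\,\<g,g\>_{\pi_{\ms E}} \le D_{\ms E}(g)$, and minimizing over $g$ gives $\mf g\le\mf g_{\ms E}$ (if the infimum defining $\mf g_{\ms E}$ is not attained one argues with near-minimizers, which changes nothing).

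\emph{Lower bound.} Let $f$ be as in the statement, so $D_N(f)=\mf g$. Set $g=f|_{\ms E_N}$, $a=\sum_{\eta\in\ms E_N}\pi(\eta)f(\eta)$ and $b=E_\pi[f^2\mb 1\{\ms E^c_N\}]$, so that $\pi(\ms E_N)\<g,g\>_{\pi_{\ms E}}=1-b$ and $\<g,1\>_{\pi_{\ms E}}=a/\pi(\ms E_N)$. Applying the variational formula for $\mf g_{\ms E}$ to $g-\<g,1\>_{\pi_{\ms E}}$ and then the Dirichlet-principle inequality from the first paragraph,
\begin{equation*}
\mf g_{\ms E}\Big( \<g,g\>_{\pi_{\ms E}} - \<g,1\>_{\pi_{\ms E}}^2 \Big) \;\le\; D_{\ms E}(g) \;\le\; \frac{1}{\pi(\ms E_N)}\, D_N(f) \;=\; \frac{\mf g}{\pi(\ms E_N)}\;,
\end{equation*}
which after multiplication by $\pi(\ms E_N)$ reads $\mf g_{\ms E}\big( 1 - b - a^2/\pi(\ms E_N)\big) \le \mf g$. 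Finally, $E_\pi[f]=0$ forces $a=-\sum_{\eta\in\ms E^c_N}\pi(\eta)f(\eta)$, so Cauchy--Schwarz gives $a^2 \le \pi(\ms E^c_N)\, b = (1-\pi(\ms E_N))\, b$, whence $1-b-a^2/\pi(\ms E_N) \ge 1 - b/\pi(\ms E_N)$. Since $\mf g_{\ms E}\ge 0$, this yields $\mf g_{\ms E}\big\{ 1 - \pi(\ms E_N)^{-1}E_\pi[f^2\mb 1\{\ms E^c_N\}]\big\}\le\mf g$.

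\emph{Main obstacle.} The only ingredient beyond the variational principles and the elementary Cauchy--Schwarz bookkeeping is the Dirichlet principle for the trace process quoted above; this is where reversibility enters, through \cite[Proposition 6.3]{bl2}, which makes $\pi_{\ms E}$ reversible for the trace process and underlies the extremal characterization of $D_{\ms E}$. Everything else is routine manipulation of the splitting $E_N=\ms E_N\cup\Delta_N$.
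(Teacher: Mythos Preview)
Your proof is correct and follows essentially the same route as the paper's own argument: both directions rest on the identity $\pi(\ms E_N)\,D_{\ms E}(g)=\inf\{D_N(u):u|_{\ms E_N}=g\}$ (attained at the harmonic extension), the variational characterization of the two spectral gaps, and the same Cauchy--Schwarz bookkeeping on $\Delta_N$. The one difference is that you cite this Dirichlet-principle identity from \cite{bl2}, whereas the paper proves it directly in the course of the argument (the paper's equations (5.1) and (5.2)); since that identity does not appear verbatim in \cite{bl2}, it would be safer either to prove it as the paper does or to cite \cite[Proposition~6.1]{bl2} together with a short computation.
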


In the examples we have in mind $\pi(\ms E_N)$ converges to $1$. In
particular, if we show that an eigenfunction associated to $\mf g$ is
bounded, $\mf g_{\ms E}/\mf g$ converges to $1$.  We provide in
Lemma \ref{s22} an upper bound for $\mf g_{\ms E}$ in terms of
capacities.

Denote by $\Psi_N:\ms E_N\mapsto S = \{1, \dots, \kappa\}$, the
projection given by
$$
\Psi_N(\eta) \;=\; \sum_{x=1}^\kappa  x \, 
\mathbf 1\{\eta \in \ms E^x_N\}\;.
$$
and by $\{X^N_t: t\ge 0\}$ the stochastic process on $S$ defined by
$X^N_t=\Psi_N(\eta^{\ms E}(t))$. Clearly, besides trivial cases,
$\{X^N_t: t\ge 0\}$ is not Markovian. We refer to $X^N_t$ as the
\emph{order} process or order for short.

\smallskip\noindent{\bf B. Reflected process.} Denote by $\{\eta^{\mb r,
  x} (t) : t\ge 0\}$, $1\le x \le\kappa$, the Markov process $\eta
(t)$ reflected at $\ms E^x_N$. This is the process obtained from the
Markov process $\eta(t)$ by forbiding all jumps from $\eta$ to $\xi$
if $\eta$ or $\xi$ do not belong to $\ms E^x_N$. The generator $L_{\mb
  r, x}$ of this Markov process is given by
\begin{equation*}
(L_{\mb r, x} f)(\eta) \,=\, \sum_{\xi\in \ms E^x_N} R_N(\eta,\xi)
\, \big\{f(\xi)-f(\eta)\big\}\;, \quad \eta\in \ms E^x_N\;.
\end{equation*}
Assume that the reflected process $\eta^{\mb r, x} (t)$ is irreducible
for each $1\le x\le \kappa$. It is easy to show that the conditioned
probability measure $\pi_{x}$ defined by 
\begin{equation}
\label{jo1}
\pi_{x} (\eta) \;=\; \frac{\pi(\eta)}{\pi(\ms E^x_N)}\;,
\quad \eta\in \ms E^x_N \;, 
\end{equation}
is reversible for the reflected process.  Let $\mf g_{\mb r, x}$ be
the spectral gap of the reflected process:
\begin{equation*}
\mf g_{\mb r, x} \;=\; \inf_f \frac{\< (-L_{\mb r, x}) f, f\>_{\pi_x}} 
{\<f,f\>_{\pi_x}} \;,
\end{equation*}
where the infimum is carried over all functions $f$ in $L^2(\pi_x)$
which are orthogonal to the constants: $\<f,1\>_{\pi_x}=0$. 

\smallskip\noindent{\bf C. Enlarged process}. Consider a irreducible,
positive recurrent Markov process $\xi(t)$ on a countable set $E$
which jumps from a state $\eta$ to a state $\xi$ at rate $R(\eta,
\xi)$. Denote by $\pi$ the unique stationary state of the process. Let
$E^\star$ be a copy of $E$ and denote by $\eta^\star\in E^\star$ the
copy of $\eta\in E$. Following \cite{bg11}, for $\gamma >0$ denote by
$\xi^\gamma (t)$ the Markov process on $E \cup E^\star$ whose jump
rates $R^\gamma (\eta,\xi)$ are given by
\begin{equation*}
R^\gamma (\eta,\xi) \;=\; 
\begin{cases}
R(\eta,\xi) & \text{if $\eta$ and $\xi\in E$,} \\
\gamma & \text{if $\xi = \eta^\star$ or if $\eta = \xi^\star$,} \\
0 & \text{otherwise.} 
\end{cases}
\end{equation*}
Therefore, being at some state $\xi^\star$ in $E^\star$, the process
may only jump to $\xi$ and this happens at rate $\gamma$. In contrast,
being at some state $\xi$ in $E$, the process $\xi^\gamma (t)$ jumps
with rate $R (\xi, \xi')$ to some state $\xi'\in E$, and jumps with
rate $\gamma$ to $\xi^\star$.  We call the process $\xi^\gamma (t)$
the \emph{$\gamma$-enlargement} of the process $\xi(t)$.

Let $\pi_\star$ be the probability measure on $E \cup E^\star$ defined
by
\begin{equation*}
\pi_\star (\eta) = (1/2)\, \pi (\eta)\; ,\; \quad
\pi_\star (\eta^\star) = \pi_\star (\eta) \;,\;\; 
\eta\in E\;.
\end{equation*}
The probability measure $\pi_\star$ is invariant for the enlarged
process $\xi^\gamma (t)$ and is reversible whenever $\pi$ is
reversible.  \smallskip

Let $\ms E^{\star, x}_N$, $1\le x\le \kappa$, be a copy of the set
$\ms E^x_N$ and let $\ms E^\star_N = \cup_{1\le x\le \kappa} \ms
E^{\star, x}_N$, $\breve{\ms E}^{\star, x}_N = \cup_{y\not = x} \ms
E^{\star, y}_N$.  Fix a sequence $\gamma = \gamma_N$ and denote by
$\eta^\star (t) = \eta^{\ms E, \gamma}$ the $\gamma$-enlargement of
the trace process $\eta^{\ms E}(t)$.  Denote the generator of this
Markov chain by $L_\star$, by $R_\star (\eta,\xi)$ the rate at which
it jumps from $\eta$ to $\xi$, and by $\lambda_\star(\eta)$ the
holding rates, $\lambda_\star(\eta) = \sum_{\xi\in \ms E_N \cup \ms
  E^\star_N} R_\star (\eta,\xi)$.

Denote by $\bb P^{\star, \gamma}_\eta$, $\eta\in \ms E_N \cup \ms
E^\star_N$, the probability measure on the path space $D(\bb R_+, \ms
E_N \cup \ms E^\star_N)$ induced by the Markov process $\eta^\star(t)$
starting from $\eta$ and recall the definition of the hitting time and
the return time introduced in \eqref{71}.  For $x\not = y\in S$, let
$r_N(x,y)$ be the average rate at which the enlarged process
$\eta^\star (t)$ jumps from $\ms E^{\star, x}_N$ to $\ms E^{\star,
  y}_N$:
\begin{equation}
\label{31}
\begin{split}
r_N(x,y) \; &=\; \frac{1}{\pi_{\star} (\ms E^{\star, x}_N)}
\sum_{\eta\in \ms E^{\star, x}_N} \pi_{\star} (\eta)\, \lambda_\star(\eta)
\, \bb P^{\star, \gamma}_\eta
\big[ H_{\ms E^{\star, y}_N} < H^+_{\breve{\ms E}^{\star, y}_N} \big] \\
&=\; \frac{\gamma}{\pi_{\ms E} (\ms E^{x}_N)}
\sum_{\eta\in \ms E^{x}_N} \pi_{\ms E} (\eta)\, \bb P^{\star, \gamma}_\eta
\big[ H_{\ms E^{\star, y}_N} < H_{\breve{\ms E}^{\star, y}_N} \big] \;.  
\end{split}
\end{equation}
By \cite[Proposition 6.2]{bl2}, $r_N(x,y)$ corresponds to the average
rate at which the trace of the process $\eta^\star(t)$ on $\ms
E^\star_N$ jumps from $\ms E^{\star, x}_N$ to $\ms E^{\star,
  y}_N$. This explains the terminology.

For two disjoint subsets $\ms A$, $\ms B$ of $\ms E_N \cup \ms
E^\star_N$, denote by $\Cap_\star(\ms A, \ms B)$ the capacity between
$\ms A$ and $\ms B$:
\begin{equation*}
\Cap_\star(\ms A, \ms B) \;=\; \sum_{\eta\in \ms A} \pi_\star(\eta) \,
\lambda_\star(\eta)\, \bb P^{\star,\gamma}_\eta \big[ H_{\ms B} <
H^+_{\ms A}\big]\;. 
\end{equation*}
Let $A$, $B$ be two disjoint subsets of $S$. Taking $\ms A= \cup_{x\in
  A} \ms E^{\star, x}_N$, $\ms B= \cup_{y\in B} \ms E^{\star, y}_N$ in
the previous formula, since the enlarged process may only jump from
$\eta^\star$ to $\eta$ and since $\pi_\star(\eta^\star) =
\pi_\star(\eta) = (1/2) \pi_{\ms E}(\eta)$,
\begin{equation}
\label{20}
\Cap_\star \Big( \bigcup_{x\in A} \ms E^{\star, x}_N, \bigcup_{y\in B} \ms
E^{\star, y}_N \Big) \;=\; \frac{\gamma} 2 
\sum_{x\in A} \sum_{\eta\in \ms E^{x}_N} \pi_{\ms E} (\eta) \,
\bb P^{\star,\gamma}_\eta \big[ H (\bigcup_{y\in B}
\ms E^{\star, y}_N) < H (\bigcup_{x\in A} \ms E^{\star, x}_N) \big]\;. 
\end{equation}
It follows from this identity and some simple algebra that
\begin{equation}
\label{28}
\pi_\star (\ms E^{x}_N) \, \sum_{y\not = x} r_N(x,y) \;=\;
\frac{\gamma} 2 \sum_{\eta\in \ms E^{x}_N} \pi_{\ms E} (\eta) \,
\bb P^{\star,\gamma}_\eta \big[ H_{\breve{\ms E}^{\star, x}_N}
< H_{\ms E^{\star, x}_N} \big] \;=\; 
\Cap_\star \big( \ms E^{\star, x}_N, \breve{\ms E}^{\star, x}_N \big) \;.
\end{equation}

\smallskip\noindent{\bf D. $L^2$ theory.}  We show in this subsection
that with very few assumptions one can prove the convergence of the
finite-dimensional distributions of the order $X^N_t$.  Let
\begin{equation}
\label{38}
\ms M_x \;=\; \min\big\{ \pi_{\ms E} (\ms E^{x}_N) \,,\,
1- \pi_{\ms E} (\ms E^{x}_N) \big\}\;, \quad x\in S\;.
\end{equation}

\begin{theorem}
\label{s02}
Suppose that there exist a non-negative sequence $\{\theta_N : N\ge
1\}$ and non-negative numbers $r(x,y)$, $x\not = y\in S$, such that
\begin{equation}
\tag*{\bf (L1)}
\begin{split}
\theta^{-1}_N \; & \ll\; \min_{x\in S} \mf g_{\mb r, x} \;, \\
\lim_{N\to \infty} \theta_N \, r_N (x,y) \; & =\; r(x,y)\;, \quad
x\not = y\in S\;.
\end{split}
\end{equation}
Fix $x_0\in S$. Let $\{\nu_N : N\ge 1\}$ be a sequence of probability
measures concentrated on $\ms E^{x_0}_N$, $\nu_N(\ms E^{x_0}_N)=1$,
and such that
\begin{equation}
\label{32}
\tag{\bf L2G}
E_{\pi_{\ms E}} \Big[ \Big( \frac {d\nu_N}{d\pi_{\ms E}} 
\Big)^2 \Big] \;\le\; \frac{C_0}{ \max_{x\in S} \, \ms M_x}
\end{equation}
for some finite constant $C_0$. Then, under $\bb P^{\ms E}_{\nu_N}$
the finite-dimensional distributions of the time-rescaled order ${\mb
  X}^N_t = X^N_{t \theta_N}$ converge to the finite-dimensional
distributions of the Markov process on $S$ which starts from $x_0$ and
jumps from $x$ to $y$ at rate $r(x,y)$.
\end{theorem}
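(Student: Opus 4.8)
\emph{Proof plan.} Although the order $X^N$ is not Markovian, the trace process $\eta^{\ms E}(t)$ is, and $X^N_t=\Psi_N(\eta^{\ms E}(t))$; this is used throughout. The plan is to reduce the convergence of the finite--dimensional distributions to the single semigroup estimate: for every bounded $f:S\to\bb R$ and every $t>0$, uniformly for $t$ in compact sets,
\begin{equation*}
\big\| P^{\ms E}_{t\theta_N}(f\circ\Psi_N)\,-\,\big(e^{t\mathbb L}f\big)\circ\Psi_N\big\|_{L^2(\pi_{\ms E})}\;\longrightarrow\;0 \;,
\end{equation*}
where $P^{\ms E}_s=e^{sL_{\ms E}}$ and $\mathbb L$ is the generator on $S$ with rates $r(x,y)$. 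Granting this, the finite--dimensional convergence follows by induction on the number of time points: in $\bb E^{\ms E}_{\nu_N}\big[\prod_{i=1}^{k}f_i(\mb X^N_{t_i})\big]$ one conditions on the trace filtration at time $t_{k-1}\theta_N$, uses the Markov property of $\eta^{\ms E}$ to replace the last factor by $\big(e^{(t_k-t_{k-1})\mathbb L}f_k\big)(\Psi_N(\eta^{\ms E}(t_{k-1}\theta_N)))$ up to an $L^2(\pi_{\ms E})$ error, and controls this error by Cauchy--Schwarz: the law of $\eta^{\ms E}(t_{k-1}\theta_N)$ under $\bb P^{\ms E}_{\nu_N}$ has $\pi_{\ms E}$--density $P^{\ms E}_{t_{k-1}\theta_N}(d\nu_N/d\pi_{\ms E})$, and $\|P^{\ms E}_{t_{k-1}\theta_N}(d\nu_N/d\pi_{\ms E})\|_{L^2(\pi_{\ms E})}\le\|d\nu_N/d\pi_{\ms E}\|_{L^2(\pi_{\ms E})}$ since $P^{\ms E}_s$ is an $L^2(\pi_{\ms E})$--contraction; this is exactly where the hypothesis \textbf{(L2G)} enters. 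One thus collapses a $k$--point expectation into a $(k-1)$--point one, the base case $k=1$ being the displayed estimate together with $\nu_N(\ms E^{x_0}_N)=1$, which produces $(e^{t\mathbb L}f)(x_0)$.

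For the semigroup estimate, introduce the orthogonal projection $\ms P$ in $L^2(\pi_{\ms E})$ onto the functions constant on each well $\ms E^x_N$ (i.e.\ conditional expectation given $\Psi_N$), and decompose $L_{\ms E}=L_0+L_1$ into its within--well and between--well parts, so that $\ms P L_0=0$ and $L_0$ annihilates well--constant functions. Write $u_N(t)=P^{\ms E}_{t\theta_N}(f\circ\Psi_N)$, $w_N(t)=\ms P u_N(t)$ (identified with a function on $S$), $v_N(t)=(I-\ms P)u_N(t)$. The fluctuation satisfies $v_N(0)=0$ and
\begin{equation*}
\frac{d}{dt}\,\|v_N(t)\|_{L^2(\pi_{\ms E})}^{2}\;=\;2\theta_N\big[-\,\ms D_0\big(v_N(t)\big)\,+\,\big\langle v_N(t),\,L_1u_N(t)\big\rangle_{\pi_{\ms E}}\big]\;,
\end{equation*}
$\ms D_0$ being the Dirichlet form of $L_0$. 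Here a Dirichlet--form comparison (the within--well transition rates of the trace process dominate those of the reflected process) gives $\ms D_0(v)\ge\big(\min_x\mf g_{\mb r, x}\big)\,\|v\|_{L^2(\pi_{\ms E})}^{2}$ on $\{\ms P v=0\}$, while $|\langle v_N,L_1u_N\rangle|\le 2\|f\|_\infty\,\|v_N\|_\infty\sum_{\eta}\pi_{\ms E}(\eta)\,R^{\ms E}\big(\eta,\breve{\ms E}^{\Psi_N(\eta)}_N\big)$, the last sum being $\sum_{x\neq y}\pi_{\ms E}(\ms E^x_N)\,\bar r_N(x,y)=O(\theta_N^{-1})$, where $\bar r_N(x,y)$ denotes the $\pi_{\ms E}$--average over $\ms E^x_N$ of the between--well rate into $\ms E^y_N$; together with $\theta_N^{-1}\ll\min_x\mf g_{\mb r, x}$ (the first part of \textbf{(L1)}) and $\|v_N\|_\infty\le 2\|f\|_\infty$, Gr\"onwall's inequality yields $\|v_N(t)\|_{L^2(\pi_{\ms E})}^{2}=O\big(\|f\|_\infty^{2}/(\theta_N\min_x\mf g_{\mb r, x})\big)\to 0$. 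The well--constant part obeys $\dot w_N=\theta_N\,(\ms P L_{\ms E}\ms P)\,w_N+\theta_N\,\ms P L_1 v_N$ on the finite set $S$, where $\ms P L_{\ms E}\ms P$ is the generator with rates $\bar r_N(x,y)$ and $\theta_N\bar r_N(x,y)\to r(x,y)$ by the second part of \textbf{(L1)} (using the identification, furnished by the $\gamma$--enlargement, of the averaged bare rates $\bar r_N$ with the capacitary rates $r_N$ of \eqref{31}); since the forcing $\theta_N\ms P L_1 v_N$ is small by the fluctuation bound, continuous dependence for linear ODEs on $S$ gives $w_N(t)\to e^{t\mathbb L}f$. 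Combining the two components proves the estimate.

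The step I expect to be the main obstacle is the quantitative control of the fluctuation $v_N$ and of the forcing $\theta_N\ms P L_1 v_N$: one must ensure that the resulting bounds are not merely $o(1)$ but small enough to survive the Cauchy--Schwarz pairing against the densities allowed by \textbf{(L2G)}, i.e.\ roughly $o(\max_x\ms M_x)$, which demands a careful accounting of how the between--well rates are distributed \emph{inside} each well and of the interplay between $\min_x\mf g_{\mb r, x}$, $\theta_N$ and the well masses $\pi_{\ms E}(\ms E^x_N)$; this is delicate in the degenerate regime where some well masses tend to $0$. A secondary technical point is the comparison, under \textbf{(L1)}, of the averaged bare rates $\bar r_N(x,y)$ with the capacitary rates $r_N(x,y)$: this is where the $\gamma$--enlargement does its work, the rate--$\gamma$ teleportation smoothing out the fast within--well oscillations of the exit rate so that $r_N$ becomes a robust object, and it requires $\gamma=\gamma_N$ to lie in the window $\theta_N^{-1}\ll\gamma_N\ll\min_x\mf g_{\mb r, x}$.
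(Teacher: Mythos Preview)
Your plan has a genuine gap at precisely the point you flag as ``secondary'': the identification of the averaged bare rates $\bar r_N(x,y)=E_{\pi_x}[R^{\ms E}(\eta,\ms E^y_N)]$ with the capacitary rates $r_N(x,y)$ of \eqref{31}. No such identification holds. Hypothesis \textbf{(L1)} controls only $r_N$, and \eqref{50} gives $\sum_{y\neq x}r_N(x,y)\le\sum_{y\neq x}\bar r_N(x,y)$ with no reverse inequality. In fact the paper introduces the extra hypothesis \eqref{43}, namely $\limsup_N\theta_N\,E_{\pi_{x_0}}[R^{\ms E}(\eta,\breve{\ms E}^{x_0}_N)]<\infty$, precisely because a bound on $\theta_N\bar r_N$ is \emph{not} a consequence of \textbf{(L1)}. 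This breaks your argument in two places: (i) your fluctuation estimate needs $\sum_\eta\pi_{\ms E}(\eta)R^{\ms E}(\eta,\breve{\ms E}^{\Psi_N(\eta)}_N)=O(\theta_N^{-1})$, which is exactly the unavailable bound on $\bar r_N$; (ii) your ODE for $w_N$ has generator $\ms P L_{\ms E}\ms P$ with rates $\bar r_N$, so you need $\theta_N\bar r_N(x,y)\to r(x,y)$, again not given. The $\gamma$-enlargement does not ``smooth'' $\bar r_N$ into $r_N$; rather, $r_N$ is defined through the enlargement and is a genuinely different object from the start.

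The paper's proof avoids $\bar r_N$ altogether by replacing the indicators $\mb 1\{\ms E^x_N\}$ with the equilibrium potentials $V_x(\eta)=\bb P^{\star,\gamma}_\eta[H_{\ms E^{\star,x}_N}<H_{\breve{\ms E}^{\star,x}_N}]$ of the $\gamma$-enlarged trace process, for a sequence $\theta_N^{-1}\ll\gamma_N\ll\min_x\mf g_{\mb r,x}$. Two features make this work. First, by \eqref{19} one has $L_{\ms E}V_x=-\gamma(1-V_x)$ on $\ms E^x_N$ and $L_{\ms E}V_x=\gamma V_x$ on $\breve{\ms E}^x_N$, and an explicit computation (identity \eqref{29}) gives $E[L_{\ms E}V_x\mid\mc P]=-\sum_{y\neq x}r_N(x,y)$ on $\ms E^x_N$ and $r_N(y,x)$ on $\ms E^y_N$: the conditional expectation produces the capacitary rates $r_N$, not $\bar r_N$. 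Second, the replacement of $L_{\ms E}V_x$ by its conditional expectation is controlled via the $\mc H_{-1}$ norm (Proposition~\ref{s01}), which after the spectral-gap bound reduces to $\gamma^2\mf g_{\mb r,x}^{-1}\|1-V_x\|_{\pi_x}^2$; this is bounded through the Dirichlet-form identity \eqref{24} and the capacity estimate \eqref{37}, both consequences of \textbf{(L1)} alone. Claim~A (that $V_x$ is $L^2$-close to $\mb 1\{\ms E^x_N\}$) also comes from \eqref{25}, and the factor $\ms M_x$ there is what makes \textbf{(L2G)} the right density hypothesis. In short, the equilibrium potentials are the device that converts the abstract hypothesis on $r_N$ into a usable martingale; working with indicators and the bare projection $\ms P L_{\ms E}\ms P$ forces you onto $\bar r_N$, which \textbf{(L1)} does not control.
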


Let $\nu_N$ be the measure $\pi_{x_0}$ defined in \eqref{jo1}. In this
case condition \eqref{32} becomes
\begin{equation}
\label{69}
\tag{\bf L2}
\min_{x\in S}\, \max_{z\not = x} \, \pi (\ms E^{z}_N)  
\;\le\; C_0 \, \min_{y\in S} \,  \pi (\ms E^{y}_N)
\end{equation}
for some finite constant $C_0$.  Condition \eqref{32} is satisfied in
two cases. Either if all wells $\ms E^y_N$ are stable sets (there
exists a positive constant $c_0$ such that $\pi_{\ms E}(\ms E^y_N) \ge
c_0$ for all $y\in S$, $N\ge 1$), or if there is only one stable set
and all the other ones have comparable measures (there exists $x\in S$
and $C_0$ such that $\lim_N \pi_{\ms E}(\ms E^x_N) =1$ and $\pi_{\ms
  E}(\ms E^y_N) \le C_0 \pi_{\ms E}(\ms E^z_N)$ for all $y$, $z\not =
x$). In particular, when there are only two wells, $|S|=2$, assumption
\eqref{32} is satisfied by the measures $\nu_N = \pi_{x}$,
$x=1,2$. \smallskip

Theorem \ref{s02} describes the asymptotic evolution of the trace of
the Markov $\eta(t)$ on $\ms E_N$. The next lemma shows that in the
time scale $\theta_N$ the time spent on the complement of $\ms E_N$ is
negligible.

\begin{lemma}
\label{s09}
Assume that
\begin{equation}
\tag*{\bf (L3)}
\lim_{N\to\infty} \frac{\pi(\Delta_N)}{\pi(\ms E^x_N)} \;=\; 0
\end{equation}
for all $x\in S$. Let $\{\nu_N : N\ge 1\}$ be a sequence of
probability measures concentrated on some well $\ms E^{x_0}_N$,
$x_0\in S$, and satisfying \eqref{32}. Then, for every $t>0$,
\begin{equation}
\label{58}
\lim_{N\to\infty} \bb E_{\nu_N}
\Big[\int_0^t {\bs 1}\{\eta (s\theta_N) \in \Delta_N\}
\, ds \,\Big]\, =\,0\,.
\end{equation}
\end{lemma}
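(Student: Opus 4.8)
The plan is to show that neither the spectral gaps of the reflected processes nor hypothesis (L1) enter: the statement follows from the $L^2(\pi)$-contraction property of the semigroup, the $L^2$ bound \eqref{32} on the initial density, and (L3) alone. Write $\{P_t : t\ge 0\}$ for the semigroup of $\eta(t)$, and set $f = d\nu_N/d\pi$; since $\nu_N$ is concentrated on $\ms E^{x_0}_N \subset \ms E_N$, the density $f$ is supported in $\ms E^{x_0}_N$, and \eqref{32} will be seen to force $f \in L^2(\pi)$. By Tonelli's theorem and the identity $\bb E_{\nu_N}[\mathbf 1\{\eta(r) \in \Delta_N\}] = \<f, P_r\, \mathbf 1\{\Delta_N\}\>_\pi$, where $\mathbf 1\{\Delta_N\}$ denotes the indicator of $\Delta_N$, the left-hand side of \eqref{58} equals $\int_0^t \<f, P_{s\theta_N}\, \mathbf 1\{\Delta_N\}\>_\pi\, ds$.

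First I would bound the integrand uniformly in the time argument. Since $P_r$ is a contraction of $L^2(\pi)$, the Cauchy--Schwarz inequality gives, for every $r \ge 0$, $|\<f, P_r\, \mathbf 1\{\Delta_N\}\>_\pi| \le \|f\|_{L^2(\pi)}\, \|\mathbf 1\{\Delta_N\}\|_{L^2(\pi)} = \|f\|_{L^2(\pi)}\, \pi(\Delta_N)^{1/2}$. Consequently the left-hand side of \eqref{58} is at most $t\, \|f\|_{L^2(\pi)}\, \pi(\Delta_N)^{1/2}$, and the time scale $\theta_N$ has played no role --- this is why (L1) is not needed.

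It remains to estimate $\|f\|_{L^2(\pi)}$ by the left-hand side of \eqref{32}. Because $\pi_{\ms E} = \pi/\pi(\ms E_N)$ on $\ms E_N$ and $f$ is supported in $\ms E_N$, we have $d\nu_N/d\pi_{\ms E} = \pi(\ms E_N)\, f$ on $\ms E_N$, whence $E_\pi[f^2] = \pi(\ms E_N)^{-1}\, E_{\pi_{\ms E}}[(d\nu_N/d\pi_{\ms E})^2] \le C_0\, (\pi(\ms E_N)\, \max_{x\in S}\ms M_x)^{-1}$ by \eqref{32}, which in particular yields $f \in L^2(\pi)$. Next I would check the elementary bound $\pi(\ms E_N)\, \max_{x\in S}\ms M_x \ge \min_{x\in S}\pi(\ms E^x_N)$: if $x_\ast$ minimizes $\pi(\ms E^x_N)$ then, since $\kappa \ge 2$ and the wells partition $\ms E_N$, one has $\pi_{\ms E}(\ms E^{x_\ast}_N) \le 1/\kappa \le 1/2$, so that $\ms M_{x_\ast} = \pi_{\ms E}(\ms E^{x_\ast}_N)$ and $\pi(\ms E_N)\, \max_{x}\ms M_x \ge \pi(\ms E_N)\, \pi_{\ms E}(\ms E^{x_\ast}_N) = \pi(\ms E^{x_\ast}_N)$. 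Combining the three estimates, the left-hand side of \eqref{58} is at most $t\, \sqrt{C_0}\, \big(\max_{x\in S}\pi(\Delta_N)/\pi(\ms E^x_N)\big)^{1/2}$, which converges to $0$ by (L3).

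The only step requiring some care is the last piece of bookkeeping, namely the inequality $\pi(\ms E_N)\, \max_{x}\ms M_x \ge \min_{x}\pi(\ms E^x_N)$, which is exactly what makes the $L^2$ hypothesis \eqref{32}, written in terms of the reference measure $\pi_{\ms E}$, compatible with hypothesis (L3), written in terms of $\pi$. Everything else is a one-line consequence of $L^2$-contractivity and Cauchy--Schwarz, and no input on recurrence, mixing, or spectral gaps beyond the existence of $\pi$ is used.
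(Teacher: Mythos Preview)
Your proof is correct and follows essentially the same approach as the paper's. The only organizational difference is that the paper applies Cauchy--Schwarz once at the level of the initial distribution, bounding $\big(\bb E_{\nu_N}[\,\cdot\,]\big)^2$ by $\pi(\ms E_N)^{-1}\, E_{\pi_{\ms E}}[(d\nu_N/d\pi_{\ms E})^2]\,\bb E_\pi\big[(\int_0^t\cdots)^2\big]$ and then using stationarity to get $\bb E_\pi\big[(\int_0^t\cdots)^2\big]\le t^2\pi(\Delta_N)$, whereas you first pass to the time integral via Tonelli and then apply Cauchy--Schwarz together with $L^2$-contractivity of the semigroup to each time slice; both routes produce the same final estimate $t\sqrt{C_0}\,(\pi(\Delta_N)/\min_x\pi(\ms E^x_N))^{1/2}$.
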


\smallskip\noindent{\bf E. Mixing theory.}  If one is able to show
that the process mixes inside each well before leaving the well, the
assumptions on the initial state can be relaxed and the convergence of
the order can be derived.  Let $T^{\rm mix}_{\mb r, x}$, $x\in
S$, be the mixing time of the reflected process $\eta^{\mb r, x} (t)$.

\begin{theorem} 
\label{s00}
Fix $x_0\in S$.  Suppose that there exist a non-negative sequence
$\{\theta_N : N\ge 1\}$ and non-negative numbers $r(x,y)$, $x\not =
y\in S$, satisfying conditions {\rm {\bf (L1)}} and such that
\begin{equation}
\label{43}
\limsup_{N\to\infty} \theta_N \, E_{\pi_{x_0}}[R^{\ms E}(\eta,
\breve{\ms E}^{x_0})] \;<\; \infty\;.
\end{equation}
Let $\{\nu_N : N\ge 1\}$ be a sequence of probability measures
concentrated on $\ms E^{x_0}_N$, $\nu_N(\ms E^{x_0}_N)=1$. Assume that
condition \eqref{69} is fulfilled and that there exists a sequence
$T_N$, $T^{\rm mix}_{\mb r, x_0} \ll T_N \ll\theta_N$, such that
\begin{equation}
\tag*{\bf (L4)}
\lim_{N\to\infty} 
\bb P^{\ms E}_{\nu_N} \big[ H_{\breve{\ms E}^{x_0}_N} \le T_N
\;\big] \;=\; 0\;.
\end{equation}
Then, the finite-dimensional distributions of the time-rescaled order
${\mb X}^N_t = X^N_{t \theta_N}$ under $\bb P^{\ms E}_{\nu_N}$
converges to the finite-dimensional distributions of the Markov
process on $S$ which starts from $x_0$ and jumps from $x$ to $y$ at
rate $r(x,y)$.
\end{theorem}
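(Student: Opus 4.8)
The plan is to reduce Theorem \ref{s00} to Theorem \ref{s02} by replacing the ``bad'' initial distribution $\nu_N$, which is only known to be concentrated on $\ms E^{x_0}_N$, by a ``good'' one that satisfies the $L^2$-bound \eqref{32}, at the cost of an error term that vanishes in the limit. Since Theorem \ref{s02} with $\nu_N = \pi_{x_0}$ already gives the desired convergence (condition \eqref{69} is exactly \eqref{32} for this choice), it suffices to show that for every fixed finite collection of times $0 \le t_1 < \dots < t_k$ and every bounded function on $S^k$, the expectation under $\bb P^{\ms E}_{\nu_N}$ of the corresponding functional of $({\mb X}^N_{t_1}, \dots, {\mb X}^N_{t_k})$ is asymptotically equal to the same expectation under $\bb P^{\ms E}_{\pi_{x_0}}$.

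The mechanism for this is the mixing assumption \textbf{(L4)} together with the separation of scales $T^{\rm mix}_{\mb r, x_0} \ll T_N \ll \theta_N$. First I would use \textbf{(L4)} to say that, up to an event of vanishing probability under $\bb P^{\ms E}_{\nu_N}$, the trace process has not left the well $\ms E^{x_0}_N$ by time $T_N$; hence on this event ${\mb X}^N_s = x_0$ for all $s \le T_N/\theta_N$ and, moreover, the trace process restricted to $[0, H_{\breve{\ms E}^{x_0}_N}]$ coincides with the reflected process $\eta^{\mb r, x_0}$. Since $T_N \gg T^{\rm mix}_{\mb r, x_0}$, the law of $\eta^{\mb r, x_0}(T_N)$ started from $\nu_N$ is within $o(1)$ in total variation of $\pi_{x_0}$. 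Therefore the law of $\eta^{\ms E}(T_N)$ under $\bb P^{\ms E}_{\nu_N}$, call it $\mu_N$, is $o(1)$-close in total variation to $\pi_{x_0}$; in particular $\mu_N$ is concentrated on $\ms E^{x_0}_N$ and inherits whatever $L^2$-type control one needs. Applying the Markov property at time $T_N$ and using $T_N/\theta_N \to 0$, the finite-dimensional distributions of ${\mb X}^N$ under $\bb P^{\ms E}_{\nu_N}$ are, up to $o(1)$, those of ${\mb X}^N$ under $\bb P^{\ms E}_{\mu_N}$, and then by total-variation closeness, those under $\bb P^{\ms E}_{\pi_{x_0}}$, to which Theorem \ref{s02} applies. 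The extra hypothesis \eqref{43} enters to guarantee that shifting the starting time by $T_N$ (equivalently, $T_N/\theta_N$ in the rescaled clock) does not move the process out of $\ms E^{x_0}_N$: it bounds the rate of leaving $\ms E^{x_0}_N$ from $\pi_{x_0}$ on the scale $\theta_N$, so that $\bb P^{\ms E}_{\pi_{x_0}}[H_{\breve{\ms E}^{x_0}_N} \le T_N] \le T_N\, E_{\pi_{x_0}}[R^{\ms E}(\eta, \breve{\ms E}^{x_0})] = (T_N/\theta_N)\, O(1) = o(1)$.

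The main obstacle, and the step requiring genuine care, is the passage from mixing of the \emph{reflected} process $\eta^{\mb r, x_0}$ to control of the \emph{trace} process $\eta^{\ms E}$ on the interval $[0,T_N]$. These two processes agree only up to the hitting time $H_{\breve{\ms E}^{x_0}_N}$, so one must combine the mixing estimate with the bound from \textbf{(L4)} on $\{H_{\breve{\ms E}^{x_0}_N} \le T_N\}$; there is a delicate interplay here because the mixing time bound is a worst-case-over-initial-states statement for the reflected chain, whereas we need it for the specific (possibly singular) measure $\nu_N$. I would handle this by coupling: run the reflected chain and the trace chain together, started from $\nu_N$, so that they coincide until $H_{\breve{\ms E}^{x_0}_N}$; on the complement of $\{H_{\breve{\ms E}^{x_0}_N} \le T_N\}$ the two values at time $T_N$ agree, and the mixing property controls the reflected one regardless of $\nu_N$. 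A secondary technical point is to ensure $\mu_N$ (or a slight modification, e.g. $\mu_N$ conditioned on $\ms E^{x_0}_N$) genuinely satisfies \eqref{32}: since $\| \mu_N - \pi_{x_0}\|_{\rm TV} \to 0$ this follows once one notes that the $L^2$-norm bound \eqref{69} for $\pi_{x_0}$ is stable under $o(1)$ total-variation perturbations supported on $\ms E^{x_0}_N$, or alternatively by writing the finite-dimensional functional directly as an expectation of a bounded function against $\mu_N$ and against $\pi_{x_0}$ and bounding the difference by $2\|\mu_N - \pi_{x_0}\|_{\rm TV}$, which sidesteps re-verifying \eqref{32} for $\mu_N$ altogether.
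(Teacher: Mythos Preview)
Your outline is correct and follows the same route as the paper: reduce to Theorem~\ref{s02} with initial law $\pi_{x_0}$ by (i) using \textbf{(L4)} to discard $\{H_{\breve{\ms E}^{x_0}_N}\le T_N\}$, (ii) coupling the trace and reflected processes on the complement, (iii) using $T_N\gg T^{\rm mix}_{\mb r,x_0}$ to replace the time-$T_N$ law by $\pi_{x_0}$ in total variation, and (iv) absorbing the time shift $T_N/\theta_N\to 0$.

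One substantive difference: to control $\bb P^{\ms E}_{\pi_{x_0}}[H_{\breve{\ms E}^{x_0}_N}\le T_N]$ the paper passes through the quasi-stationary measure $\pi^*_{x_0}$ and invokes results of Bianchi--Gaudilli\`ere, whereas you assert the direct bound $T_N\,E_{\pi_{x_0}}[R^{\ms E}(\eta,\breve{\ms E}^{x_0}_N)]$. Your bound is valid, but note that $\pi_{x_0}$ is \emph{not} stationary for the trace process, so the one-line justification needs care: write $\bb P^{\ms E}_{\pi_{x_0}}[H\le T_N]=\bb E^{\ms E}_{\pi_{x_0}}\big[\int_0^{T_N\wedge H}R^{\ms E}(\eta^{\ms E}(s),\breve{\ms E}^{x_0}_N)\,ds\big]$ by optional stopping, bound the integrand by $R^{\ms E}(\eta^{\ms E}(s),\breve{\ms E}^{x_0}_N)\mb 1\{\eta^{\ms E}(s)\in\ms E^{x_0}_N\}$, and then dominate $\pi_{x_0}$ by $\pi_{\ms E}/\pi_{\ms E}(\ms E^{x_0}_N)$ to use stationarity of $\pi_{\ms E}$. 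This is indeed more elementary than the paper's route.

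One warning: your first suggestion, that the $L^2$ bound \eqref{69} for $\pi_{x_0}$ is ``stable under $o(1)$ total-variation perturbations'', is false in general (a TV-small perturbation can have arbitrarily large $L^2$ norm). Your second suggestion---comparing expectations of bounded functionals directly via $\|\mu_N-\pi_{x_0}\|_{\rm TV}$ and thereby bypassing \eqref{32} for $\mu_N$---is the correct fix, and is exactly what the paper does.
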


Assumption \eqref{43} is not difficult to be verified. By \cite[Lemma
6.7]{bl2},
\begin{equation}
\label{61}
E_{\pi_x} \big[ R^{\ms E}(\eta, \breve{\ms E}^{x}_N) \big] \;=\;
\frac 1{\pi(\ms E^{x}_N)} \; \Cap_N (\ms E^{x}_N , \breve{\ms
  E}^{x}_N)\; ,
\end{equation} 
The Dirichlet principle \cite{g1, gl2} provides a variational formula
for the capacity and a bound for the expression in \eqref{43}. We show
in \eqref{50} below that $\sum_{y\not = x} r_N(x,y) \le E_{\pi_x}
[ R^{\ms E}(\eta, \breve{\ms E}^{x}_N)]$.

An assumption slightly stronger than ({\bf L4}) gives tightness of the
speeded-up order.  For a probability measure $\nu_N$ on $\ms
E_N$, denote by $\bb Q_{\nu_N}$ the probability measure on the path
space $D(\bb R_+, S)$ induced by the time-rescaled order ${\mb
  X}^N_{t} = \Psi_N(\eta^{\ms E}(t\theta_N))$ starting from $\nu_N$.

\begin{lemma}
\label{s15}
Let $\{\theta_N : N\ge 1\}$ be a sequence such that $\theta^{-1}_N \ll
\min_{x\in S} \mf g_{\mb r, x}$ and such that for all $x\in S$,
\begin{equation}
\label{51}
\limsup_{N\to\infty} \theta_N \, E_{\pi_x}[R^{\ms E}(\eta,
\breve{\ms E}^{x}_N)] \;<\; \infty\;.
\end{equation}
Assume that there exists a sequence $T_N$ such that
$\max_{x\in S} T^{\rm mix}_{\mb r, x} \ll T_N$ and such that
for all $x\in S$,
\begin{equation}
\tag*{\bf (L4U)}
\lim_{N\to\infty} \sup_{\eta\in \ms E^{x}_N}
\bb P^{\ms E}_{\eta} \big[ H_{\breve{\ms E}^{x}_N} \le T_N
\;\big] \;=\; 0\;.
\end{equation}
Let $\nu_N$ be a sequence of probability measures on $\ms E_N$.
Then, the sequence $(\bb Q_{\nu_N} : N\ge 1)$ is tight.
\end{lemma}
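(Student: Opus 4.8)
The plan is to verify Aldous's tightness criterion for the $D(\bb R_+,S)$-valued processes $({\mb X}^N)$. Since $S$ is finite, compact containment and tightness of the one-dimensional marginals are automatic, so it is enough to prove that for every $T>0$ and every $\epsilon\in(0,1)$,
\begin{equation*}
\lim_{\delta\downarrow0}\ \limsup_{N\to\infty}\ \sup_{\tau\le T}\
\bb Q_{\nu_N}\big[\, {\mb X}^N_{\tau+\delta}\neq {\mb X}^N_\tau\,\big]\;=\;0\;,
\end{equation*}
the supremum running over the stopping times $\tau$ of the natural filtration of $({\mb X}^N_t)$. Given such a $\tau$, I would set $\sigma=\theta_N\tau$, which is a stopping time for $\eta^{\ms E}(\cdot)$. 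On $\{{\mb X}^N_\tau=x\}$ the trace process sits in $\ms E^x_N$ at time $\sigma$ and, since it only visits $\ms E_N$, a change of the order on $(\tau,\tau+\delta]$ forces the process restarted at $\eta^{\ms E}(\sigma)$ to hit $\breve{\ms E}^x_N$ before time $\delta\theta_N$. The strong Markov property at $\sigma$ then reduces the statement to
\begin{equation*}
\lim_{\delta\downarrow0}\ \limsup_{N\to\infty}\ \max_{x\in S}\
\sup_{\zeta\in\ms E^x_N}\ \bb P^{\ms E}_\zeta\big[\,H_{\breve{\ms E}^x_N}\le\delta\theta_N\,\big]\;=\;0\;.
\tag{$\star$}
\end{equation*}
No hypothesis on $\nu_N$ enters here, because $\eta^{\ms E}(\sigma)$ is in any case an arbitrary point of some well. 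For fixed $\delta$ one may discard, in the $\limsup_N$, the indices with $\delta\theta_N\le T_N$, for which the inner probability is at most $\sup_{\zeta\in\ms E^x_N}\bb P^{\ms E}_\zeta[H_{\breve{\ms E}^x_N}\le T_N]$, which vanishes by \textbf{(L4U)}; so I may assume $\delta\theta_N>T_N$.

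The key intermediate estimate is an escape bound started from the conditioned measure $\pi_x$. On $\ms E^x_N$ one has $\pi_x\le[\pi(\ms E_N)/\pi(\ms E^x_N)]\,\pi_{\ms E}$, so positivity of the trace semigroup together with the stationarity of $\pi_{\ms E}$ gives $\bb P^{\ms E}_{\pi_x}[\eta^{\ms E}(u)=\eta]\le\pi_x(\eta)$ for all $\eta\in\ms E^x_N$ and $u\ge0$. Bounding $\bb P^{\ms E}_{\pi_x}[H_{\breve{\ms E}^x_N}\le s]$ by the expected number of jumps of $\eta^{\ms E}$ out of $\ms E^x_N$ before time $s$, evaluating the latter via the Dynkin compensator, and then invoking \eqref{61} and \eqref{51}, one obtains, for $N$ large and uniformly in $x$,
\begin{equation*}
\bb P^{\ms E}_{\pi_x}\big[\,H_{\breve{\ms E}^x_N}\le\delta\theta_N\,\big]\;\le\;
\delta\,\theta_N\,E_{\pi_x}\big[R^{\ms E}(\eta,\breve{\ms E}^x_N)\big]\;\le\;C_1\,\delta\;.
\end{equation*}

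To pass from $\pi_x$ to an arbitrary $\zeta\in\ms E^x_N$ I would split $\{H_{\breve{\ms E}^x_N}\le\delta\theta_N\}$ at time $T_N$. The part $\{H_{\breve{\ms E}^x_N}\le T_N\}$ is controlled uniformly in $\zeta$ by \textbf{(L4U)}; on its complement $\eta^{\ms E}$ lies in $\ms E^x_N$ at time $T_N$, and the Markov property at $T_N$ bounds the remaining term by $\sum_\eta\nu^\zeta_N(\eta)\,g_N(\eta)$, where $g_N(\eta)=\bb P^{\ms E}_\eta[H_{\breve{\ms E}^x_N}\le\delta\theta_N]\le 1$ and $\nu^\zeta_N(\eta)=\bb P^{\ms E}_\zeta[H_{\breve{\ms E}^x_N}>T_N,\ \eta^{\ms E}(T_N)=\eta]$ is the time-$T_N$ kernel of the trace process killed upon leaving $\ms E^x_N$. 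By the Feynman--Kac formula this killed kernel is dominated entrywise by the kernel of the Markov process $\bar\eta^x$ with generator $(\bar L^xf)(\eta)=\sum_{\xi\in\ms E^x_N}R^{\ms E}(\eta,\xi)\{f(\xi)-f(\eta)\}$, i.e. the reflection of the \emph{trace} process $\eta^{\ms E}$ inside $\ms E^x_N$, which is reversible for $\pi_x$ and whose Dirichlet form dominates that of $\eta^{\mb r,x}$, so that its spectral gap is at least $\mf g_{\mb r,x}$. Using the hypothesis $T^{\rm mix}_{\mb r,x}\ll T_N$ I would show that $\bar\eta^x$ has relaxed to $\pi_x$ by time $T_N$, in the sense that $\nu^\zeta_N\le C_2\,\pi_x$ uniformly in $\zeta$; combined with the previous estimate this bounds the remaining term by $C_2\,\bb P^{\ms E}_{\pi_x}[H_{\breve{\ms E}^x_N}\le\delta\theta_N]\le C_1C_2\,\delta$. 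Feeding this and the \textbf{(L4U)}-bound back into $(\star)$ and letting $N\to\infty$ and then $\delta\downarrow0$ would finish the argument.

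The hard part will be this last relaxation step, i.e. the uniform bound $\nu^\zeta_N\le C_2\pi_x$. The dynamics confined to a well is $\bar\eta^x$, not the reflected process $\eta^{\mb r,x}$: it carries the extra transition rates produced by the excursions of $\eta(t)$ through $\Delta_N$, so it can only be controlled through the comparison of Dirichlet forms --- which delivers spectral-gap, hence $L^2$-relaxation, bounds, but for a genuine $L^\infty$ statement costs a logarithmic factor $\log(1/\pi_{\min})$ over the relaxation time --- and through the hypothesis $T^{\rm mix}_{\mb r,x}\ll T_N$; making this precise requires a careful choice of the relevant notion of mixing time and some care with the trace-versus-reflection discrepancy. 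When only a spectral-gap bound for $\bar\eta^x$ is available, the natural substitute for the pointwise domination is a Cauchy--Schwarz estimate of $\sum_\eta\nu^\zeta_N(\eta)g_N(\eta)$ against $\|g_N\|_{L^2(\pi_x)}^2\le\bb P^{\ms E}_{\pi_x}[H_{\breve{\ms E}^x_N}\le\delta\theta_N]\le C_1\delta$, together with the $L^2$-contraction rate $\mf g_{\mb r,x}$ coming from \textbf{(L1)}. Finally one must check that conditioning on $\{H_{\breve{\ms E}^x_N}>T_N\}$ does not spoil the estimate: here \textbf{(L4U)} re-enters, guaranteeing that this event has probability tending to $1$ uniformly in $\zeta$, so that the killed kernel stays close to --- and below --- the relaxed kernel of $\bar\eta^x$.
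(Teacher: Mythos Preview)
Your architecture matches the paper's: Aldous's criterion, the strong Markov property to reduce to $(\star)$, and a split at time $T_N$ with \textbf{(L4U)} handling the first piece. On the second piece both you and the paper face two tasks: (i) bound $\bb P^{\ms E}_{\pi_x}[H_{\breve{\ms E}^x_N}\le\delta\theta_N]$, and (ii) pass from an arbitrary starting point $\zeta\in\ms E^x_N$ to the conditioned measure $\pi_x$.

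Your proposal leaves a genuine gap in (ii), which you yourself flag. You dominate the killed kernel by the kernel of $\bar\eta^x$, the reflection of the \emph{trace} process in $\ms E^x_N$, and then try to transfer the mixing hypothesis from $\eta^{\mb r,x}$ to $\bar\eta^x$; as you note, Dirichlet-form comparison gives only spectral-gap control of $\bar\eta^x$, not the pointwise domination $\nu^\zeta_N\le C_2\pi_x$, and the assumption $T^{\rm mix}_{\mb r,x}\ll T_N$ does not obviously yield the $L^\infty$ (or even uniform $L^2$) relaxation of $\bar\eta^x$ that your suggested workarounds would need. The paper avoids $\bar\eta^x$ entirely: it asserts that on the event $\{H_{\breve{\ms E}^x_N}>T_N\}$ one may couple $\eta^{\ms E}$ with $\eta^{\mb r,x}$ itself so that $\eta^{\ms E}(T_N)=\eta^{\mb r,x}(T_N)$, and after dropping the indicator bounds the second piece by
\[
\Vert \delta_\zeta S^{\mb r,x}(T_N)-\pi_x\Vert_{\rm TV}\;+\;\bb P^{\ms E}_{\pi_x}\big[H_{\breve{\ms E}^x_N}\le\delta\theta_N\big]\,,
\]
the first summand being at most $(1/2)^{T_N/T^{\rm mix}_{\mb r,x}}\to 0$ directly from the mixing-time hypothesis. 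Thus the paper uses total-variation mixing of $\eta^{\mb r,x}$, not any relaxation statement for $\bar\eta^x$, and no Dirichlet-form transfer enters. Your last paragraph correctly isolates the trace-versus-reflection discrepancy as the delicate point; the paper's resolution is to couple directly with $\eta^{\mb r,x}$ rather than to work with $\bar\eta^x$.

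For (i), by contrast, your compensator bound $\bb P^{\ms E}_{\pi_x}[H_{\breve{\ms E}^x_N}\le s]\le s\,E_{\pi_x}[R^{\ms E}(\eta,\breve{\ms E}^x_N)]\le C_1 s/\theta_N$ is correct and considerably shorter than the paper's route. The paper instead compares $\pi_x$ with the quasi-stationary measure $\pi^*_x$: it bounds $\|\pi^*_x/\pi_x-1\|_{L^2(\pi_x)}$ via results of \cite{bg11} (this is where \eqref{51} and $\theta_N^{-1}\ll\mf g_{\mb r,x}$ are used), and then exploits that from $\pi^*_x$ the hitting time is exponential with rate $\phi^*_x\le E_{\pi_x}[R^{\ms E}(\eta,\breve{\ms E}^x_N)]\le C_0\theta_N^{-1}$, so that $\bb P^{\ms E}_{\pi^*_x}[H_{\breve{\ms E}^x_N}\le\delta\theta_N]\le 1-e^{-C_0\delta}$. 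Your argument makes the quasi-stationary machinery unnecessary for this step.
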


In Section \ref{sec6} we present a bound for the probability appearing
in condition ({\bf L4U}).  Let $\ms F^x_N$, $x\in S$, be subsets of
$E_N$ containing $\ms E^x_N$, $\ms E^x_N \subset \ms F^x_N$. Denote by
$T^{\rm mix}_{\mb r, \ms F^x_N}$ the mixing time of the process
$\eta(t)$ reflected at $\ms F^x_N$.

\begin{lemma}
\label{s25}
Fix $x\in S$ and suppose that there exist a set $\ms D^x_N\subset \ms
E^x_N$ and a sequence $T_N$, $T^{\rm mix}_{\mb r, \ms F^x_N} \ll T_N
\ll\theta_N$, such that
\begin{equation}
\tag*{\bf (L4E)}
\lim_{N\to\infty}  \max_{\eta\in \ms D^x_N} 
\bb P_{\eta} \big[ H_{(\ms F^{x}_N)^c} \le T_N \;\big] \;=\; 0 \;.
\end{equation}
Then, \eqref{58} holds for any $t>0$ and any sequence of probability
measures $\nu_N$ concentrated on $\ms D^x_N$ provided condition {\rm
  ({\bf L3})} is in force.
\end{lemma}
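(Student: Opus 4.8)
The plan is to run the process only for a short time $T_N$ with $T^{\rm mix}_{\mb r,\ms F^x_N}\ll T_N\ll\theta_N$, to show that by time $T_N$ the law of $\eta(T_N)$ has come close in total variation to $\pi$ conditioned to $\ms F^x_N$, and then to control the time spent in $\Delta_N$ by the crude stationary estimate, which is already good enough thanks to {\bf (L3)}. Denote by $I_N$ the expectation in \eqref{58}, so that $I_N=\theta_N^{-1}\,\bb E_{\nu_N}\big[\int_0^{t\theta_N}\mathbf 1\{\eta(s)\in\Delta_N\}\,ds\big]$. First I would bound the contribution of $[0,T_N]$ by $T_N$ and apply the Markov property at time $T_N$, obtaining $I_N\le T_N/\theta_N+\theta_N^{-1}\,\bb E_{\nu_N}[F(\eta(T_N))]$, where $F(\zeta):=\bb E_\zeta\big[\int_0^{t\theta_N}\mathbf 1\{\eta(s)\in\Delta_N\}\,ds\big]$ satisfies $0\le F\le t\theta_N$. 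Since $T_N\ll\theta_N$ the first term is negligible, and the whole matter reduces to estimating $\theta_N^{-1}\,\bb E_{\nu_N}[F(\eta(T_N))]$.

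The heart of the argument is the bound
\begin{equation*}
\big\|\,\mathrm{law}_{\bb P_{\nu_N}}\!\big(\eta(T_N)\big)-\pi_{\ms F^x_N}\,\big\|_{\rm TV}\;\le\;\epsilon_N\;:=\;\max_{\eta\in\ms D^x_N}\bb P_\eta\big[H_{(\ms F^x_N)^c}\le T_N\big]\;+\;d_{\ms F^x_N}(T_N)\;,
\end{equation*}
where $\pi_{\ms F^x_N}$ denotes $\pi$ conditioned to $\ms F^x_N$, the reversible measure of the process reflected at $\ms F^x_N$, and $d_{\ms F^x_N}(\cdot)$ is its worst-case distance to equilibrium. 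To establish this I would couple $\eta(\cdot)$ with the reflected process, both started from $\nu_N$, so that the two trajectories coincide up to the first exit of $\eta(\cdot)$ from $\ms F^x_N$; since $\nu_N$ is supported on $\ms D^x_N$, condition {\bf (L4E)} bounds the probability of such an exit before time $T_N$ by $\max_{\eta\in\ms D^x_N}\bb P_\eta[H_{(\ms F^x_N)^c}\le T_N]=o(1)$, and therefore the law of $\eta(T_N)$ differs from that of the reflected process at time $T_N$ by at most this amount in total variation. Since $T^{\rm mix}_{\mb r,\ms F^x_N}\ll T_N$, submultiplicativity of the distance to equilibrium forces this last law to lie within $d_{\ms F^x_N}(T_N)=o(1)$ of $\pi_{\ms F^x_N}$, uniformly in the starting configuration; hence $\epsilon_N\to0$.

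Given this, using $0\le F\le t\theta_N$ and the total-variation bound, $\bb E_{\nu_N}[F(\eta(T_N))]\le E_{\pi_{\ms F^x_N}}[F]+t\theta_N\,\epsilon_N$; and since $\pi_{\ms F^x_N}(\zeta)\le\pi(\zeta)/\pi(\ms F^x_N)\le\pi(\zeta)/\pi(\ms E^x_N)$ for every $\zeta$ and $\pi$ is stationary, $E_{\pi_{\ms F^x_N}}[F]\le\pi(\ms E^x_N)^{-1}\,\bb E_\pi\big[\int_0^{t\theta_N}\mathbf 1\{\eta(s)\in\Delta_N\}\,ds\big]=t\theta_N\,\pi(\Delta_N)/\pi(\ms E^x_N)$. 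Collecting the estimates, $I_N\le T_N/\theta_N+t\,\epsilon_N+t\,\pi(\Delta_N)/\pi(\ms E^x_N)$, and the three terms tend to $0$ by the choice of $T_N$, by $\epsilon_N\to0$, and by {\bf (L3)}, which is \eqref{58}. I expect the main obstacle to be the coupling/mixing estimate for $\epsilon_N$: the total-variation error is inflated by $\|F\|_\infty\le t\theta_N$ before one divides by $\theta_N$, so one genuinely needs $\epsilon_N\to0$ and not merely a bounded $\epsilon_N$ --- this is precisely what {\bf (L4E)} together with the mixing hypothesis $T^{\rm mix}_{\mb r,\ms F^x_N}\ll T_N$ provide.
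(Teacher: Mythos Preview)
Your argument is correct and follows essentially the same route as the paper: split off the first $T_N$ units of time, couple $\eta(\cdot)$ with the process reflected at $\ms F^x_N$ on the event $\{H_{(\ms F^x_N)^c}>T_N\}$, use the mixing hypothesis to replace the time-$T_N$ law by $\pi_{\ms F^x_N}$, and then bound the remaining integral by the stationary estimate $t\,\pi(\Delta_N)/\pi(\ms F^x_N)$ (you wrote the slightly weaker $t\,\pi(\Delta_N)/\pi(\ms E^x_N)$, which is fine given {\bf (L3)}). The only cosmetic difference is that the paper splits on the event $\{H_{(\ms F^x_N)^c}>T_N\}$ before applying the Markov property, whereas you package the coupling and mixing directly into a single total-variation bound on the law of $\eta(T_N)$; the ingredients and the final estimate are the same.
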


Even if we are not able to prove the pointwise versions {\bf (L4U)} or
{\bf (L4E)} of the mixing condition, we can still show that the
measures of the wells converge in the Cesaro sense.

\begin{proposition}
\label{s16} 
Fix $x_0\in S$.  Assume that conditions {\rm {\bf (L1)}}, \eqref{69}
and \eqref{43} are fulfilled.  Let $\{\nu_N : N\ge 1\}$ be a sequence
of probability measures concentrated on $\ms E^{x_0}_N$ and satisfying
conditions \eqref{58} and {\rm {\bf (L4)}}. Denote by $\{S_N(r) \,|\,
r\ge 0\}$ the semigroup of the process $\eta(r)$.  Then, for every
$t>0$ and $x\in S$,
\begin{equation}
\label{56}
\lim_{N\to\infty} \int_0^t  \big[ \nu_N S_N(\theta_N r) \big] \, 
( \ms E^{x}_N ) \, dr \;=\; \int_0^t [\delta_{x_0} S (r)](x) \, dr \;, 
\end{equation}
where $S (r)$ stands for the semigroup of the continuous-time Markov
chain on $S$ which jumps from $y$ to $z$ at rate $r(y,z)$, and where
$\delta_{x_0}$ stands for the probability measure on $S$ concentrated
at $x_0$.
\end{proposition}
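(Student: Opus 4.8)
The plan is to reduce the assertion \eqref{56}, which concerns occupation times of the wells by the full chain $\eta(\cdot)$, to a statement about the one-dimensional marginals of the order process, and then to quote Theorem \ref{s00}. First I would write, by Tonelli's theorem,
\begin{equation*}
\int_0^t \big[\nu_N S_N(\theta_N r)\big](\ms E^x_N)\, dr
\;=\; \bb E_{\nu_N}\Big[\int_0^t \mathbf 1\{\eta(r\theta_N)\in \ms E^x_N\}\, dr\Big]\;,
\end{equation*}
so that the left-hand side of \eqref{56} is the expected rescaled time the full chain spends in $\ms E^x_N$ up to time $t\theta_N$. The natural comparison object is the occupation time of $\ms E^x_N$ by the order process, $\int_0^t \mathbf 1\{X^N_{r\theta_N}=x\}\, dr$, whose expectation under $\bb P^{\ms E}_{\nu_N}$ is controlled by Theorem \ref{s00}.

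For the comparison I would use that the trace process $\eta^{\ms E}(\cdot)$ is the time change of $\eta(\cdot)$ by the additive functional $T(u)=\int_0^u \mathbf 1\{\eta(s)\in\ms E_N\}\, ds$. Since $\ms E^x_N\subset \ms E_N$, the change of variables attached to this time change gives, for every $u\ge 0$, the identity $\int_0^u \mathbf 1\{\eta(s)\in\ms E^x_N\}\, ds = \int_0^{T(u)} \mathbf 1\{\eta^{\ms E}(r)\in\ms E^x_N\}\, dr$. Taking $u=t\theta_N$, dividing by $\theta_N$, and using that $X^N_{r\theta_N}=x$ exactly when $\eta^{\ms E}(r\theta_N)\in\ms E^x_N$, the difference between the two rescaled occupation times is at most $\theta_N^{-1}\big(t\theta_N - T(t\theta_N)\big) = \int_0^t \mathbf 1\{\eta(r\theta_N)\in\Delta_N\}\, dr$, whose $\bb P_{\nu_N}$-expectation vanishes as $N\to\infty$ by the hypothesis \eqref{58}. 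Since $\nu_N$ is concentrated on $\ms E_N$, the law of $(\eta^{\ms E}(r))_{r\ge 0}$ under $\bb P_{\nu_N}$ coincides with $\bb P^{\ms E}_{\nu_N}$, and hence
\begin{equation*}
\int_0^t \big[\nu_N S_N(\theta_N r)\big](\ms E^x_N)\, dr
\;=\; \int_0^t \bb P^{\ms E}_{\nu_N}\big[X^N_{r\theta_N}=x\big]\, dr \;+\; o(1)\;.
\end{equation*}

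It remains to pass to the limit on the right-hand side. The hypotheses of Proposition \ref{s16} --- conditions {\bf (L1)}, \eqref{69} and \eqref{43}, the concentration of $\nu_N$ on $\ms E^{x_0}_N$, and condition {\bf (L4)} --- are precisely the hypotheses of Theorem \ref{s00}. That theorem therefore gives convergence of the finite-dimensional distributions of $X^N_{\cdot\theta_N}$ under $\bb P^{\ms E}_{\nu_N}$ to those of the Markov chain on $S$ started at $x_0$ with rates $r(y,z)$; specializing to a single time and using that $S$ is discrete, $\bb P^{\ms E}_{\nu_N}[X^N_{r\theta_N}=x]\to [\delta_{x_0}S(r)](x)$ for \emph{every} $r\ge 0$. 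These quantities being bounded by $1$ on $[0,t]$, bounded convergence yields \eqref{56}. I do not anticipate a serious obstacle in this argument: the substance is carried by Theorem \ref{s00}, already established, and the only genuinely new ingredient is the elementary time-change identity of the second paragraph, whose use relies solely on the granted hypothesis \eqref{58}.
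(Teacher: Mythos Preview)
Your argument is correct and follows essentially the same approach as the paper: both reduce the claim to Theorem~\ref{s00} via the time-change identity $\int_0^u \mb 1\{\eta(s)\in\ms E^x_N\}\,ds = \int_0^{T_{\ms E}(u)} \mb 1\{\eta^{\ms E}(r)\in\ms E^x_N\}\,dr$ and control the defect using hypothesis~\eqref{58}. The only cosmetic difference is that you bound the difference of occupation times directly by $\theta_N^{-1}(t\theta_N - T_{\ms E}(t\theta_N))$, whereas the paper sandwiches the quantity of interest between upper and lower bounds involving an auxiliary parameter~$\delta$; your route is slightly more economical.
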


\smallskip\noindent{\bf F. Two valleys.}  We suppose from now on that
there are only two valleys, $\ms E^1_N = \ms A_N$ and $\ms E^2_N = \ms
B_N$. It is possible in this case to establish a relation between the
spectral gap of the trace process and the capacities of the enlarged
process, and to re-state Theorems \ref{s02} and \ref{s00} in a simpler
form. Assume that the sets $\ms E^x_N$, $x=1,2$, have an asymptotic
measure:
\begin{equation*}
\lim_{N\to \infty} \pi_{\ms E}(\ms E^x_N) \;=\; m(x) \;, 
\end{equation*}
and suppose, to fix ideas, that $m(1) \le m(2)$.

\begin{theorem}
\label{s07}
Assume that $\mf g_{\ms E} \ll \min_{x=1,2} \mf g_{\mb r, x}$ and
consider a sequence $\gamma_N$ such that $\mf g_{\ms E} \ll
\gamma_N \ll \min_{x=1,2} \mf g_{\mb r, x}$. Then,
\begin{equation*}
\lim_{N\to\infty} \frac{\Cap_\star (\ms A^\star_N, \ms B^\star_N)}
{\mf g_{\ms E}\, \pi_{\ms E}(\ms A_N) \, \pi_{\ms E}(\ms B_N)} 
\;=\; \frac 12 \;\cdot
\end{equation*}
\end{theorem}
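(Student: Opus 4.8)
The plan is to bring together three facts already at our disposal: the sandwich estimate for $\mf g_{\ms E}$ from Proposition~\ref{s05}, the identity \eqref{28} which expresses $\pi_\star(\ms E^x_N)\sum_{y\neq x}r_N(x,y)$ as a capacity of the enlarged process, and a spectral-theoretic characterization of the capacity $\Cap_\star(\ms A^\star_N,\ms B^\star_N)$ of the enlarged process. In the two-valley case there are exactly two off-diagonal rates, $r_N(1,2)$ and $r_N(2,1)$, and reversibility of the enlarged process forces $\pi_\star(\ms A_N)\,r_N(1,2)=\pi_\star(\ms B_N)\,r_N(2,1)$. By \eqref{28} with $\kappa=2$ we get $\Cap_\star(\ms A^\star_N,\ms B^\star_N)=\pi_\star(\ms A_N)\,r_N(1,2)=\tfrac12\,\pi_{\ms E}(\ms A_N)\,r_N(1,2)$, so the whole problem reduces to showing
\begin{equation*}
\lim_{N\to\infty}\frac{r_N(1,2)}{\mf g_{\ms E}\,\pi_{\ms E}(\ms B_N)}\;=\;1\;.
\end{equation*}

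The key analytic step is to understand the effective dynamics of the enlarged trace process under the scale separation $\mf g_{\ms E}\ll\gamma_N\ll\min_x\mf g_{\mb r,x}$. Since $\gamma_N$ is much larger than $\mf g_{\ms E}$, the copy states $\ms E^{\star,x}_N$ equilibrate with their originals on a time scale faster than the interwell transitions; since $\gamma_N$ is much smaller than the reflected gaps $\mf g_{\mb r,x}$, the process inside each well $\ms E^x_N$ reaches the conditioned measure $\pi_x$ before either escaping the well or jumping to the copy. Consequently the enlarged trace process, viewed on the slow time scale, behaves like a two-state chain on $\{1,2\}$ with rates $r_N(1,2),r_N(2,1)$, and its spectral gap is $r_N(1,2)+r_N(2,1)$ to leading order. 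On the other hand, by a standard argument (collapsing the two-state structure, or by a test-function computation using the eigenfunction of $L_{\ms E}$), the spectral gap of the enlarged trace process is asymptotically equal to that of the trace process itself, namely $\mf g_{\ms E}$; intuitively the $\gamma_N$-enlargement only doubles the state space with fast bridges and does not change the slow spectrum. Combining, $r_N(1,2)+r_N(2,1)\sim\mf g_{\ms E}$, and then reversibility together with $\pi_{\ms E}(\ms A_N)+\pi_{\ms E}(\ms B_N)\to 1$ gives $r_N(1,2)\sim\mf g_{\ms E}\,\pi_{\ms E}(\ms B_N)$, which is exactly what we need.

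To make the spectral comparisons rigorous I would use the variational (Dirichlet-form) characterization of the gaps together with the capacitary identities. For the lower bound on $\Cap_\star(\ms A^\star_N,\ms B^\star_N)$ I would plug the equilibrium potential of the enlarged process into the Dirichlet principle and control the Dirichlet form using the two-block decomposition of $\ms E_N\cup\ms E^\star_N$; the contributions within each block are handled by the reflected gaps $\mf g_{\mb r,x}$ (which are $\gg\gamma_N$), while the cross terms produce $\gamma_N$-order quantities that reproduce $r_N$. For the matching upper bound I would use the Thomson (flow) principle, or alternatively argue directly through Proposition~\ref{s05} that the trace gap $\mf g_{\ms E}$ and the enlarged-trace gap differ by a factor $1+o(1)$ because the relevant eigenfunction can be taken bounded on the well decomposition. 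The main obstacle, I expect, will be the uniformity of these estimates: one must show that the ``error'' terms in both the Dirichlet and Thomson bounds are genuinely negligible relative to $\mf g_{\ms E}\,\pi_{\ms E}(\ms A_N)\,\pi_{\ms E}(\ms B_N)$, and this requires quantitative use of the separation $\mf g_{\ms E}\ll\gamma_N\ll\min_x\mf g_{\mb r,x}$ — in particular tracking how the equilibration inside each well (governed by $\mf g_{\mb r,x}$) dominates the slow escape (governed by $\gamma_N$ and $\mf g_{\ms E}$). The degenerate case $m(1)=0$ (a metastable well against a stable one) should also be checked, since then $\pi_{\ms E}(\ms A_N)\to 0$ and one must be sure the asymptotic ratio $1/2$ survives; this follows because the factor $\pi_{\ms E}(\ms A_N)$ cancels on both sides of the displayed limit.
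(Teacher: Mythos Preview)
Your reduction via \eqref{28} is correct but tautological: since $\Cap_\star(\ms A^\star_N,\ms B^\star_N)=\pi_\star(\ms A_N)\,r_N(1,2)=\tfrac12\pi_{\ms E}(\ms A_N)\,r_N(1,2)$, the claim $r_N(1,2)\sim\mf g_{\ms E}\,\pi_{\ms E}(\ms B_N)$ is literally the theorem restated. Nothing has been reduced. What remains is your heuristic that the enlarged process collapses to a two-state chain with gap $r_N(1,2)+r_N(2,1)\sim\mf g_{\ms E}$; this is the entire content, and you do not prove it. The two assertions you lean on --- that the enlarged-trace gap equals the trace gap to leading order, and that the effective two-state gap is $r_N(1,2)+r_N(2,1)$ --- each require a quantitative argument using the scale separation, and you only say ``I would use'' the Dirichlet/Thomson principles without carrying out either bound.

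The paper's proof is far more direct and avoids this detour entirely. It quotes the double inequality (from \cite[Theorem 2.12]{bg11})
\[
\Big(1-\frac{2\widehat{\Cap}_\star}{\gamma}\Big)^2\;\le\;\frac{2\widehat{\Cap}_\star}{\mf g_{\ms E}}\;\le\;1+\frac{\gamma+2\widehat{\Cap}_\star}{\min\{\mf g_{\ms A},\mf g_{\ms B}\}}\;,
\qquad \widehat{\Cap}_\star:=\frac{\Cap_\star(\ms A^\star,\ms B^\star)}{\pi_{\ms E}(\ms A)\pi_{\ms E}(\ms B)}\;,
\]
and observes that under $\mf g_{\ms E}\ll\gamma\ll\min_x\mf g_{\mb r,x}$ both ends tend to $1$. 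For self-containment it proves the lower bound by a concrete computation: take the equilibrium potential $V(\eta)=\bb P^{\star,\gamma}_\eta[H_{\ms A^\star}<H_{\ms B^\star}]$ as a test function in the variational formula for $\mf g_{\ms E}$, bound the Dirichlet form $\langle V,(-L_{\ms E})V\rangle_{\pi_{\ms E}}\le 2\Cap_\star$, bound the variance below by $\pi_{\ms E}(\ms A)\pi_{\ms E}(\ms B)\big(E_{\pi_{\ms A}}[V]-E_{\pi_{\ms B}}[V]\big)^2$, and compute these two expectations \emph{exactly} as $1-2\Cap_\star/(\gamma\pi_{\ms E}(\ms A))$ and $2\Cap_\star/(\gamma\pi_{\ms E}(\ms B))$ using $\Cap_\star=\langle V,(-L_\star)V\rangle_{\pi_\star}$ and the structure of $L_\star$ on copy states. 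This is one page of explicit inequalities, not a spectral comparison between two processes. Your sketch gestures at the right objects (equilibrium potential, Dirichlet principle) but never reaches this computation; the missing idea is that $E_{\pi_{\ms A}}[V]$ and $E_{\pi_{\ms B}}[V]$ can be evaluated in closed form, which is what turns the variational inequality into the sharp constant $1/2$.
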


This result follows from \cite[Theorem 2.12]{bg11}.  Under the
assumptions of Proposition \ref{s05} we may replace in this statement the
spectral gap of the trace process by the spectral gap of the original
process. Moreover, in view of \eqref{28}, 
\begin{equation}
\label{72}
\lim_{N\to\infty} \mf g_{\ms E}^{-1} \, r_N(x,y) \;=\; m(y)\;.  
\end{equation}

When there are only two valleys, the right hand side of equation
\eqref{32} is equal to $C_0 \min \{\pi_{\ms E} (\ms E^1_N), \pi_{\ms
  E} (\ms E^2_N) \}^{-1}$.  Condition \eqref{32} then becomes
\begin{equation}
\label{39}
E_{\pi_{\ms E}} \Big[ \Big( \frac {d\nu_N}{d\pi_{\ms E}} 
\Big)^2 \Big] \;\le\; \max_{x=1,2} \, \frac{C_0}{\pi_{\ms E} (\ms E^x_N)} 
\end{equation}
for some finite constant $C_0$.  The measures $\nu_N = \pi_1$, $\pi_2$
clearly fulfill this condition.  We summarize in the next lemma the
observations just made.

\begin{lemma}
\label{s26}
Suppose that there are only two wells, $S=\{1,2\}$, and set $\theta_N =
\mf g^{-1}_{\ms E}$. Then, condition {\rm ({\bf L1})} is reduced to
the condition that
\begin{equation}
\tag*{\bf (L1B)}
\mf g_{\ms E} \;\ll\; \min \{ \mf g_{\mb r, 1} \,,\, 
\mf g_{\mb r, 2}\}  \;,
\end{equation}
the asymptotic rates $r(x,y)$ are given by $r(x,y)=m(y)$, and
condition \eqref{69} is always in force.
\end{lemma}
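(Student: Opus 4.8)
The plan is to verify the three claimed reductions one at a time, each by direct substitution of $\theta_N = \mf g_{\ms E}^{-1}$ into the general hypotheses stated for Theorems \ref{s02} and \ref{s00}. First, for condition {\bf (L1)}: the first part, $\theta_N^{-1} \ll \min_{x\in S} \mf g_{\mb r, x}$, becomes exactly $\mf g_{\ms E} \ll \min\{\mf g_{\mb r, 1}, \mf g_{\mb r, 2}\}$, which is {\bf (L1B)}. For the second part, one must identify the asymptotic rates $r(x,y) = \lim_N \theta_N r_N(x,y) = \lim_N \mf g_{\ms E}^{-1} r_N(x,y)$. Here I would invoke \eqref{72}, which was derived just above (as a consequence of Theorem \ref{s07} and the identity \eqref{28}) and asserts precisely $\lim_N \mf g_{\ms E}^{-1} r_N(x,y) = m(y)$. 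Thus the limit exists and equals $m(y)$, so the second clause of {\bf (L1)} holds with $r(x,y) = m(y)$; these are non-negative, as required.

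Second, for condition \eqref{69}: with $S = \{1,2\}$ the left side $\min_{x\in S} \max_{z\neq x}\pi(\ms E^z_N)$ is $\min\{\pi(\ms E^1_N), \pi(\ms E^2_N)\}$ (for $x=1$ the only $z\neq x$ is $z=2$, giving $\pi(\ms E^2_N)$; for $x=2$ it gives $\pi(\ms E^1_N)$; the outer minimum is then the smaller of the two), and the right side $C_0 \min_{y\in S}\pi(\ms E^y_N)$ is also $C_0 \min\{\pi(\ms E^1_N), \pi(\ms E^2_N)\}$. Hence \eqref{69} reads $\min\{\pi(\ms E^1_N),\pi(\ms E^2_N)\} \le C_0 \min\{\pi(\ms E^1_N),\pi(\ms E^2_N)\}$, which holds with $C_0 = 1$; this is the assertion that \eqref{69} is always in force. (Equivalently one checks the $L^2$ form \eqref{39}, noting $d\pi_x/d\pi_{\ms E} = \pi_{\ms E}(\ms E^x_N)^{-1}\mb 1\{\ms E^x_N\}$ so the left side of \eqref{39} equals $\pi_{\ms E}(\ms E^{x_0}_N)^{-1}$, and the right side is $\max_{x}C_0\,\pi_{\ms E}(\ms E^x_N)^{-1} \ge C_0 \pi_{\ms E}(\ms E^{x_0}_N)^{-1}$, so $C_0 = 1$ works.)

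There is essentially no obstacle here: the lemma is a bookkeeping statement collecting the simplifications that occur when $\kappa = 2$ and $\theta_N$ is chosen to be $\mf g_{\ms E}^{-1}$. The only input that is not pure algebra is the evaluation of the limiting rates, and that is exactly what \eqref{72} provides; so the one point deserving a sentence of care is to record that \eqref{72} (hence Theorem \ref{s07} and, through Proposition \ref{s05}, the comparison of $\mf g_{\ms E}$ with $\mf g$) applies under hypothesis {\bf (L1B)}, since {\bf (L1B)} is precisely the hypothesis $\mf g_{\ms E} \ll \min_{x=1,2}\mf g_{\mb r,x}$ needed there, and one may choose the auxiliary sequence $\gamma_N$ with $\mf g_{\ms E} \ll \gamma_N \ll \min_{x=1,2}\mf g_{\mb r,x}$. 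Assembling these three observations completes the proof.
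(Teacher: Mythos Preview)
Your proposal is correct and follows essentially the same approach as the paper: the lemma is explicitly introduced there as a summary of the observations made in the paragraphs preceding it (the derivation of \eqref{72} from Theorem~\ref{s07} and \eqref{28}, and the remark that \eqref{39} is satisfied by $\nu_N=\pi_1,\pi_2$), and you have simply spelled out those observations in slightly more detail. There is nothing to add.
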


In the case of two wells, there are two different asymptotic
behaviors. Assume first that $m(1) >0$. In this case $\ms E^1_N$ and
$\ms E^2_N$ are stable sets and ${\mb X}^N_t$ jumps asymptotically
from $x$ to $3-x$ at rate $m(3-x)$. If $m(1) =0$ and if $\nu_N$ is a
sequence of measures concentrated on $\ms E^1_N$, $\ms E^1_N$ is a
metastable set, $\ms E^2_N$ a stable set, and ${\mb X}^N_t$ jumps
asymptotically from $1$ to $2$ at rate $1$, remaining forever at $2$
after the jump.

\begin{remark}
\label{s06}
The average rates $r_N(x,y)$ introduced in \eqref{31} are different
from those which appeared in \cite{bl2}, but can still be expressed in
terms of the star-capacities:
\begin{equation*}
\begin{split}
& \pi_\star (\ms E^{x}_N) \, r_N(x,y) \;=\; \\
&\quad \frac 12 \, \Big\{\Cap_\star(\ms
E^{\star, x}_N, \breve{\ms E}^{\star, x}_N ) +
\Cap_\star(\ms E^{\star, y}_N, \breve{\ms E}^{\star, y}_N ) 
- \Cap_\star(\ms E^{\star, x}_N \cup \ms E^{\star, x}_N , 
\cup_{z\not = x,y} \ms E^{\star, z}_N) \Big\} \;. 
\end{split}
\end{equation*}
\end{remark}

To prove this identity observe that by \eqref{20} the right hand side
is equal to
\begin{equation*}
\frac{\gamma}{4}
\sum_{\eta\in \ms E^{x}_N} \pi_{\ms E} (\eta)\, \bb P^{\star, \gamma}_\eta
\big[ H_{\ms E^{\star, y}_N} < H_{\breve{\ms E}^{\star, y}_N} \big]
\;+\;
\frac{\gamma}{4}
\sum_{\eta\in \ms E^{y}_N} \pi_{\ms E} (\eta)\, \bb P^{\star, \gamma}_\eta
\big[ H_{\ms E^{\star, x}_N} < H_{\breve{\ms E}^{\star, x}_N} \big]\;.
\end{equation*}
By \eqref{31}, the first term is equal to $(1/2) \pi_\star (\ms
E^{x}_N) r_N(x,y)$. By definition of the enlarged process, the second
term can be written as
\begin{equation*}
\frac{\gamma}{2}
\sum_{\eta\in \ms E^{\star, y}_N} \pi_{\star} (\eta)\, 
\bb P^{\star, \gamma}_\eta
\big[ H_{\ms E^{\star, x}_N} = H^+_{\ms E^{\star}_N} \big]\;. 
\end{equation*}
By reversibility, $\pi_{\star} (\eta)\, \bb P^{\star, \gamma}_\eta [
H_\xi = H^+_{\ms E^{\star}_N} ] = \pi_{\star} (\xi)\, \bb P^{\star,
  \gamma}_\xi [ H_\eta = H^+_{\ms E^{\star}_N} ]$, $\eta\in \ms
E^{\star, y}_N$, $\xi\in \ms E^{\star, x}_N$. This concludes the proof
of the remark.

\smallskip

We conclude this section pointing out an interesting difference
between Markov processes exhibiting a metastable behavior and a Markov
processes exhibiting the cutoff phenomena \cite{lpw1}. On the level of
trajectories, after remaining a long time in a metastable set, the
first ones perform a sudden transition from one metastable set to
another, while on the level of distributions, as stated in Proposition
\ref{s16} below, in the relevant time scale these processes relax
smoothly to the equilibrium state. In contrast, processes exhibiting
the cutoff phenomena do not perform sudden transitions on the path
level, but do so on the distribution level, moving quickly in a
certain time scale from far to equilibium to close to equilibrium.

\section{Convergence of the finite-dimensional distributions}
\label{sec1}

We start this section with an important estimate which allows the
replacement of the time integral of a function $f: \ms E_N \to\bb R$
by the time integral of the conditional expectation of $f$ with
respect to the $\sigma$-algebra generated by the partition $\ms E^1_N,
\dots, \ms E^\kappa_N$.

\subsection{Replacement Lemma}

Denote by $\Vert f\Vert_{-1}$ the $\mc H_{-1}$ norm associated to the
generator $L_{\ms E}$ of a function $f: \ms E_N\to \bb R$ which has
mean zero with respect to $\pi_{\ms E}$:
\begin{equation*}
\Vert f \Vert^2_{-1}
\;=\; \sup_{h} \Big\{ 2 \< f , h \>_{\pi_{\ms E}} 
- \< h, (- L_{\ms E}) h \>_{\pi_{\ms E}} \Big\}\; ,
\end{equation*}
where the supremum is carried over all functions $h:\ms E_N\to \bb R$
with finite support. By \cite[Lemma 2.4]{klo1}, for every function $f:
\ms E_N \to \bb R$ which has mean zero with respect to $\pi_{\ms E}$,
and every $T>0$,
\begin{equation}
\label{10}
\bb E^{\ms E}_{\pi_{\ms E}} \Big[ \sup_{0\le t\le T}
\Big( \int_0^{t} f(\eta^{\ms E} (s)) 
\, ds  \, \Big)^2 \, \Big] \;\le\; 24\, T\, \Vert f \Vert^2_{-1}\;.
\end{equation}

Similarly, for a function $f: \ms E^N_x\to\bb R$ which has mean zero
with respect to $\pi_x$, denote by $\Vert f\Vert_{x,-1}$ the $\mc
H_{-1}$ norm of $f$ with respect to the generator $L_{\mb r, x}$ of
the reflected process at $\ms E^x_N$:
\begin{equation}
\label{02}
\Vert f \Vert^2_{x,-1}
\;=\; \sup_{h} \Big\{ 2 \< f , h \>_{\pi_x} 
- \< h, (-L_{\mb r, x}) h \>_{\pi_x} \Big\}\; ,
\end{equation}
where the supremum is carried over all functions $h:\ms E^x_N\to \bb
R$ with finite support. It is clear that
\begin{equation*}
\sum_{x\in S} \pi_{\ms E}(\ms E^x_N) \, \< h, (-L_{\mb r, x}) h \>_{\pi_x} 
\;\le\; \< h, (- L_{\ms E}) h \>_{\pi_{\ms E}}
\end{equation*}
for any function $h: \ms E_N \to\bb R$ with finite support. Note that
the generator of the trace process $L_{\ms E}$ may have jumps from the
boundary of a set $\ms E^N_x$ to its boundary which do not exist in
the original process. There are therefore two types of contributions
which appear on the right hand side but do not on the left hand
side. These ones, and jumps from one set $\ms E^x_N$ to another.  It
follows from the previous inequality that for every function $f:\ms
E_N\to \bb R$ which has mean zero with respect to each measure
$\pi_x$,
\begin{equation}
\label{03}
\Vert f \Vert^2_{-1} \;\le\; \sum_{x\in S} \pi_{\ms E}(\ms E^x_N)
\Vert f \Vert^2_{x,-1}\;.
\end{equation}

\begin{proposition}
\label{s01}
Let $\{\nu_N : N\ge 1\}$ be a sequence of probability measures on $\ms
E_N$. Then, for every function $f:\ms E_N\to \bb R$ which has mean
zero with respect to each measure $\pi_x$ and for every $T>0$,
\begin{equation*}
\Big( \bb E^{\ms E}_{\nu_N} \Big[ \sup_{t\le T} \, \Big|\int_0^{t} 
f(\eta^{\ms E}(s)) \, ds \Big| \,\Big] \Big)^2
\;\le\; 24 \, T\, 
E_{\pi_{\ms E}} \Big[ \Big( \frac {\nu_N}{\pi_{\ms E}} \Big)^2 \Big] 
\, \sum_{x\in S} \pi_{\ms E}(\ms E^x_N) \Vert f \Vert^2_{x,-1} \; .
\end{equation*}
\end{proposition}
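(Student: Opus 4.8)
The plan is to reduce the statement about a general initial measure $\nu_N$ to the stationary estimate \eqref{10}, via a change of measure, and then to control the resulting Dirichlet-type norm using the subadditivity inequality \eqref{03}. First I would write, using the Cauchy--Schwarz inequality in $L^2(\pi_{\ms E})$,
\begin{equation*}
\bb E^{\ms E}_{\nu_N} \Big[ \sup_{t\le T} \Big| \int_0^t f(\eta^{\ms E}(s))\, ds \Big| \Big]
\;=\; E_{\pi_{\ms E}} \Big[ \frac{d\nu_N}{d\pi_{\ms E}} \, \Phi \Big]
\;\le\; E_{\pi_{\ms E}} \Big[ \Big( \frac{d\nu_N}{d\pi_{\ms E}} \Big)^2 \Big]^{1/2}
\, E_{\pi_{\ms E}}\big[ \Phi^2 \big]^{1/2}\;,
\end{equation*}
where $\Phi(\eta) = \bb E^{\ms E}_{\eta}\big[ \sup_{t\le T} | \int_0^t f(\eta^{\ms E}(s))\, ds | \big]$ is the function on $\ms E_N$ obtained by freezing the starting point. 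Here I use that the law of $\eta^{\ms E}$ under $\bb P^{\ms E}_{\nu_N}$ is the $\nu_N$-average of the laws $\bb P^{\ms E}_{\eta}$, and that $\nu_N \ll \pi_{\ms E}$ since $\pi_{\ms E}$ has full support on $\ms E_N$.

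Next I would bound $E_{\pi_{\ms E}}[\Phi^2]$ by the left-hand side of \eqref{10}. By Jensen's inequality applied to the (conditional) expectation inside $\Phi$,
\begin{equation*}
E_{\pi_{\ms E}}\big[ \Phi^2 \big]
\;=\; E_{\pi_{\ms E}}\Big[ \bb E^{\ms E}_{\eta}\Big[ \sup_{t\le T} \Big| \int_0^t f \, ds \Big| \Big]^2 \Big]
\;\le\; E_{\pi_{\ms E}}\Big[ \bb E^{\ms E}_{\eta}\Big[ \sup_{t\le T} \Big( \int_0^t f \, ds \Big)^2 \Big] \Big]
\;=\; \bb E^{\ms E}_{\pi_{\ms E}}\Big[ \sup_{t\le T} \Big( \int_0^t f(\eta^{\ms E}(s))\, ds \Big)^2 \Big]\;,
\end{equation*}
the last equality being the definition of the expectation under the stationary process. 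Now \eqref{10} applies directly, since $f$ has mean zero with respect to $\pi_{\ms E}$: indeed $E_{\pi_{\ms E}}[f] = \sum_{x\in S} \pi_{\ms E}(\ms E^x_N)\, E_{\pi_x}[f] = 0$ because $f$ has mean zero under each $\pi_x$. This gives $E_{\pi_{\ms E}}[\Phi^2] \le 24\, T\, \Vert f \Vert^2_{-1}$.

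Finally I would combine the two displays, square, and invoke \eqref{03} to replace $\Vert f\Vert^2_{-1}$ by $\sum_{x\in S} \pi_{\ms E}(\ms E^x_N)\, \Vert f\Vert^2_{x,-1}$; the hypothesis that $f$ has mean zero under every $\pi_x$ is exactly what is needed to apply \eqref{03}. Squaring the Cauchy--Schwarz bound yields
\begin{equation*}
\Big( \bb E^{\ms E}_{\nu_N} \Big[ \sup_{t\le T} \Big| \int_0^t f(\eta^{\ms E}(s))\, ds \Big| \Big] \Big)^2
\;\le\; E_{\pi_{\ms E}} \Big[ \Big( \frac{d\nu_N}{d\pi_{\ms E}} \Big)^2 \Big]
\cdot 24\, T\, \sum_{x\in S} \pi_{\ms E}(\ms E^x_N)\, \Vert f\Vert^2_{x,-1}\;,
\end{equation*}
which is the claim (with $\nu_N/\pi_{\ms E}$ understood as the Radon--Nikodym derivative). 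None of the steps is hard; the only point demanding a little care is the passage from $\Phi$ to the stationary process — one must be careful that the supremum over $t\le T$ is inside the expectation at every stage, so that Jensen is applied to $\Phi$ (a conditional expectation of a nonnegative random variable) and not, illegitimately, pulled through the supremum. Everything else is a routine application of Cauchy--Schwarz, Jensen, the $\mc H_{-1}$ bound \eqref{10}, and the subadditivity \eqref{03}.
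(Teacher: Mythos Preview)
Your proof is correct and follows essentially the same approach as the paper. The paper's proof is terser---it compresses your Cauchy--Schwarz and Jensen steps into a single ``By Schwarz inequality'' remark yielding directly the bound $E_{\pi_{\ms E}}[(d\nu_N/d\pi_{\ms E})^2]\cdot \bb E^{\ms E}_{\pi_{\ms E}}[\sup_{t\le T}(\int_0^t f\, ds)^2]$, then immediately invokes \eqref{10} and \eqref{03}---but your explicit introduction of $\Phi$ and careful verification that $E_{\pi_{\ms E}}[f]=0$ make the same argument, just with more detail.
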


\begin{proof}
By Schwarz inequality, the expression on the left hand side of the
previous displayed equation is bounded above by
\begin{equation*}
E_{\pi_{\ms E}} \Big[ \Big( \frac {\nu_N}{\pi_{\ms E}} \Big)^2 \Big] 
\,  \bb E^{\ms E}_{\pi_{\ms E}} \Big[ \sup_{t\le T} \Big( \int_0^{t} 
f (\eta^{\ms E}(s)) \,  ds \Big)^2 \,\Big] \;.
\end{equation*}
By \eqref{10} and by \eqref{03}, the second expectation is bounded by
\begin{equation*}
24 \, T\, \sum_{x\in S} \pi_{\ms E}(\ms E^x_N) 
\, \Vert f \Vert^2_{x,-1} \; ,
\end{equation*}
which concludes the proof of the proposition.
\end{proof}

By the spectral gap, for any function $f: \ms E^x_N \to \bb R$ which
has mean zero with respect to $\pi_x$, $\Vert f \Vert^2_{x,-1} \;\le\;
\mf g^{-1}_{\mb r,x} \, \<f,f\>_ {\pi_x}$. The next result follows
from this observation and the previous proposition.


\begin{corollary}
\label{s08}
Let $\{\nu_N : N\ge 1\}$ be a sequence of probability measures on $\ms
E_N$. Then, for every function $f:\ms E_N\to \bb R$ which has mean
zero with respect to each measure $\pi_x$ and for every $T>0$,
\begin{equation*}
\Big( \bb E^{\ms E}_{\nu_N} \Big[ \sup_{t\le T} \, \Big|\int_0^{t} 
f(\eta^{\ms E}(s)) \, ds \Big| \,\Big] \Big)^2
\;\le\; 24 \, T\, 
E_{\pi_{\ms E}} \Big[ \Big( \frac {\nu_N}{\pi_{\ms E}} \Big)^2 \Big] 
\, \sum_{x\in S} \pi_{\ms E}(\ms E^x_N) \, \mf g^{-1}_{\mb r,x}
\, \<f , f\>_ {\pi_x}  \; .
\end{equation*}
\end{corollary}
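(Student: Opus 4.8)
The plan is to read off the corollary from Proposition~\ref{s01} combined with the elementary spectral-gap control of the $\mc H_{-1}$ norm of each reflected process, exactly as announced in the sentence preceding the statement. I would first record that estimate and then substitute it into Proposition~\ref{s01}; the genuine input is Proposition~\ref{s01} itself, so the corollary is essentially immediate.

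\smallskip
\emph{Step 1: the bound $\Vert f\Vert^2_{x,-1}\le \mf g^{-1}_{\mb r,x}\,\<f,f\>_{\pi_x}$ for $f:\ms E^x_N\to\bb R$ with $E_{\pi_x}[f]=0$.} Assume $\<f,f\>_{\pi_x}<\infty$, otherwise there is nothing to prove. Let $h:\ms E^x_N\to\bb R$ have finite support, put $\bar h=E_{\pi_x}[h]$ and $g=h-\bar h$. Since $L_{\mb r,x}$ annihilates constants, $\<h,(-L_{\mb r,x})h\>_{\pi_x}=\<g,(-L_{\mb r,x})g\>_{\pi_x}$; and since $f$ is $\pi_x$-mean zero, $\<f,h\>_{\pi_x}=\<f,g\>_{\pi_x}$. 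The function $g$ is bounded and constant off a finite set, hence lies in $L^2(\pi_x)$ and is $\pi_x$-mean zero, so the definition of $\mf g_{\mb r,x}$ gives $\<g,(-L_{\mb r,x})g\>_{\pi_x}\ge \mf g_{\mb r,x}\,\<g,g\>_{\pi_x}$. Therefore the bracket in \eqref{02} is at most
\[
2\,\<f,g\>_{\pi_x}-\mf g_{\mb r,x}\,\<g,g\>_{\pi_x}
=\mf g^{-1}_{\mb r,x}\,\<f,f\>_{\pi_x}-\mf g_{\mb r,x}\,\big\< g-\mf g^{-1}_{\mb r,x}f\,,\,g-\mf g^{-1}_{\mb r,x}f\big\>_{\pi_x}
\le \mf g^{-1}_{\mb r,x}\,\<f,f\>_{\pi_x}\,.
\]
Taking the supremum over such $h$ yields the claimed bound.

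\smallskip
\emph{Step 2: conclusion.} Now let $f:\ms E_N\to\bb R$ have mean zero with respect to each $\pi_x$. Apply Proposition~\ref{s01} to $f$, and then bound each term $\Vert f\Vert^2_{x,-1}$ in the resulting sum by $\mf g^{-1}_{\mb r,x}\,\<f,f\>_{\pi_x}$ using Step~1. This gives precisely the asserted inequality.

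\smallskip
I do not expect any real obstacle here. The only point worth a line of care is the passage from $h$ to its $\pi_x$-mean-zero part $g$ in Step~1: it is legitimate because both the Dirichlet form $\<h,(-L_{\mb r,x})h\>_{\pi_x}$ and the pairing $\<f,h\>_{\pi_x}$ are unchanged when a constant is added to $h$ (the former because constants lie in the kernel of $L_{\mb r,x}$, the latter because $f$ has $\pi_x$-mean zero). Once this reduction is made, the spectral-gap inequality and the completion of the square apply verbatim, and everything else is bookkeeping on top of Proposition~\ref{s01}.
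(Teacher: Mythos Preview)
Your proof is correct and follows exactly the approach the paper indicates: the spectral-gap bound $\Vert f\Vert^2_{x,-1}\le \mf g^{-1}_{\mb r,x}\,\<f,f\>_{\pi_x}$ combined with Proposition~\ref{s01}. You have simply supplied the short justification of that bound (centering $h$ and completing the square), which the paper leaves implicit.
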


We have seen in \eqref{28} that $\Cap_\star ( \ms E^{\star, x}_N,
\breve{\ms E}^{\star, x}_N ) = \pi_\star (\ms E^{x}_N) \, \sum_{y\not
  = x} r_N(x,y)$. For similar reasons, $\Cap_\star \big( \ms E^{\star,
  x}_N, \breve{\ms E}^{\star, x}_N \big) = \sum_{y\not = x} \pi_\star
(\ms E^{y}_N) \, r_N(y,x)$. If $\theta_N r_N(x,y)$ converges, as
postulated in assumption ({\bf L1}), we obtain from these identities
that
\begin{equation}
\label{37}
\Cap_\star ( \ms E^{\star, x}_N, \breve{\ms E}^{\star, x}_N ) 
\;\le\; C_0 \,\theta^{-1}_N \, \ms M_x 
\end{equation}
for some finite constant $C_0$, where $\ms M_x$ has been introduced in
\eqref{38}.

\smallskip\noindent{\bf The equilibrium potentials.}  Fix a sequence
$\gamma=\gamma_N$ such that $\theta^{-1}_N \ll \gamma \ll \min_{x\in
  S} \mf g_{\mb r, x}$ and recall that we denote by $\eta^\star(t)$
the $\gamma$-enlargement of the trace process $\eta^{\ms E}(t)$.
Denote by $V_x$, $x\in S$, the equilibrium potential between the sets
$\ms E^{\star, x}_N$ and $\breve{\ms E}^{\star, x}_N$, $V_x(\eta) =
\bb P^{\star, \gamma}_\eta [ H_{\ms E^{\star, x}_N} < H_{\breve{\ms
    E}^{\star, x}_N} ]$. Since $L_\star V_x =0$ on $\ms E_N$, we
deduce that
\begin{equation}
\label{19}
\begin{split}
& (L_{\ms E} V_x)(\eta) \;=\; - \gamma\, [1-V_x(\eta)]\;,\;\; \eta\in
\ms E^{x}_N \;, \\
&\quad (L_{\ms E} V_x)(\eta) \;=\; \gamma \, V_x(\eta) \;,\;\;
\eta\in  \breve{\ms E}^{x}_N\;.
\end{split}
\end{equation}
Moreover, since $\pi_\star (\eta) = (1/2) \pi_{\ms E}(\eta)$, $\eta\in
\ms E_N$,
\begin{equation}
\label{24}
\begin{split}
& \Cap_\star(\ms E^{\star, x}_N, \breve{\ms E}^{\star, x}_N) \;=\; \\
& \quad \frac 12\Big\{ \gamma \sum_{\eta\in \ms E^{x}_N} \pi_{\ms
  E}(\eta) \, [1- V_x(\eta)]^2 
\;+\; \< (-L_{\ms E}) V_x, V_x\>_{\pi_{\ms E}} \;+\;
\gamma \sum_{\eta\in \breve{\ms E}^{x}_N} 
\pi_{\ms E}(\eta) \, V_x(\eta)^2 \Big\} \;.
\end{split}
\end{equation}
By assumption ({\bf L1}), for all $x\not = y\in S$, $r_N(x,y) \le C_0
\theta^{-1}_N$ for some finite constant $C_0$ and for all $N$ large
enough. Hence, by \eqref{37} and by \eqref{24}, for all $x\in S$
\begin{equation}
\label{25}
\gamma \sum_{\eta\in \ms E^{x}_N} \pi_{\ms
  E}(\eta) \, [1- V_x(\eta)]^2 
\;+\; \< (-L_{\ms E}) V_x, V_x\>_{\pi_{\ms E}} \;+\;
\gamma \sum_{\eta\in \breve{\ms E}^{x}_N} 
\pi_{\ms E}(\eta) \, V_x(\eta)^2 \;\le\; 
\frac{C_0\, \ms M_x}{\theta_N} \;\cdot
\end{equation}

\smallskip\noindent{\bf Uniqueness of limit points.}  Recall the
definition of the measure $\bb Q_{\nu_N}$ introduced just before Lemma
\ref{s15}, and let $\mf L$ be the generator of the $S$-valued Markov
process given by
\begin{equation*}
(\mf L F) (x) \;=\; \sum_{y\in S} r(x,y) [F(y) - F(x)]\;.
\end{equation*}

\begin{proposition}
\label{s10}
Assume that the hypotheses of Theorem \ref{s02} are in force. Then,
the sequence $\bb Q_{\nu_N}$ has at most one limit point, the
probability measure on $D(\bb R_+, S)$ induced by the Markov process
with generator $\mf L$ starting from $x_0$.
\end{proposition}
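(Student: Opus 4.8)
The plan is to use the standard martingale characterization of Markov chains: a limit point $\bb Q$ of $\bb Q_{\nu_N}$ is supported on trajectories starting at $x_0$ and must satisfy, for every bounded function $F: S \to \bb R$, that
\begin{equation*}
M^F_t \;=\; F(\mb X_t) - F(\mb X_0) - \int_0^t (\mf L F)(\mb X_s)\, ds
\end{equation*}
is a martingale with respect to the canonical filtration. Since the martingale problem for a finite-state generator $\mf L$ is well-posed, this identifies $\bb Q$ uniquely as the law of the Markov chain with generator $\mf L$ issued from $x_0$. So the whole content is to verify the martingale property after passing to the limit.

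First I would fix $F: S \to \bb R$ and lift it to a function on $\ms E_N$ via $F \circ \Psi_N$. Along the speeded-up trace process $\eta^{\ms E}(t\theta_N)$, Dynkin's formula gives that
\begin{equation*}
F(X^N_{t\theta_N}) - F(X^N_0) - \theta_N \int_0^t (L_{\ms E}(F\circ \Psi_N))(\eta^{\ms E}(s\theta_N))\, ds
\end{equation*}
is a martingale. The generator term $\theta_N L_{\ms E}(F\circ \Psi_N)(\eta)$ is not a function of $\Psi_N(\eta)$ alone — it depends on the position of $\eta$ within the well through the boundary jump rates — so the key step is to replace it by $\sum_y r_N(\Psi_N(\eta),y)[F(y)-F(\Psi_N(\eta))]$ up to an error that vanishes after integration in time. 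This is exactly where the equilibrium potentials $V_x$ and the Replacement Lemma (Corollary \ref{s08}) enter: one writes $F \circ \Psi_N$ as a combination of the $V_x$ plus a remainder of small $\mc H_{-1}$ norm, uses \eqref{19} to compute $L_{\ms E} V_x$ in terms of $\gamma$ and $V_x$, and then uses \eqref{25} together with the bound $E_{\pi_{\ms E}}[(d\nu_N/d\pi_{\ms E})^2] \le C_0/\max_x \ms M_x$ from \eqref{32} to control the error terms via Corollary \ref{s08}. The choice $\theta_N^{-1} \ll \gamma \ll \min_x \mf g_{\mb r,x}$ is what makes both the "$\gamma$ too large" and "$\gamma$ too small" error contributions negligible. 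After these replacements the integrand becomes (asymptotically) $\theta_N \sum_y r_N(x,y)[F(y)-F(x)] \to \sum_y r(x,y)[F(y)-F(x)] = (\mf L F)(x)$ by assumption ({\bf L1}).

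Having controlled the generator term, I would then pass to the limit: along a convergent subsequence $\bb Q_{\nu_N} \to \bb Q$, the boundary terms $F(X^N_{t\theta_N}) - F(X^N_0)$ converge (for $t$ outside an at most countable set of fixed discontinuities, which suffices) and the time integral converges by the replacement argument, so the limiting process $M^F_t$ is a martingale under $\bb Q$ for the filtration generated by $\mb X$. Standard arguments (conditioning on finitely many times $0 \le s_1 < \cdots < s_n \le s < t$ and passing to the limit in $\bb E_{\bb Q_{\nu_N}}[(M^F_t - M^F_s)\prod_i \phi_i(\mb X_{s_i})]$) upgrade this to the martingale property, and $\bb Q(\mb X_0 = x_0) = 1$ because $\nu_N$ is concentrated on $\ms E^{x_0}_N$. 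Well-posedness of the martingale problem for $\mf L$ on the finite set $S$ then forces $\bb Q$ to be the law claimed. The main obstacle is the replacement step — showing quantitatively that $\theta_N L_{\ms E}(F\circ\Psi_N)$ agrees with $\mf L F \circ \Psi_N$ in the time-integrated $L^1(\bb P^{\ms E}_{\nu_N})$ sense — since this is where the interplay among the three scales $\theta_N^{-1}$, $\gamma$, and $\min_x \mf g_{\mb r,x}$, the $L^2$ bound on $\nu_N$, and the capacity estimate \eqref{37} all have to be combined; everything after that is soft.
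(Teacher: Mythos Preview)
Your overall strategy --- martingale characterization, use of the equilibrium potentials $V_x$, the Replacement Lemma, and the three-scale balance $\theta_N^{-1}\ll\gamma\ll\min_x\mf g_{\mb r,x}$ --- matches the paper's. But there is one technical misstep that would cause trouble as written: you apply Dynkin's formula to $F\circ\Psi_N$ and then try to correct the integrand $\theta_N L_{\ms E}(F\circ\Psi_N)$. The paper instead applies Dynkin directly to the smoothed test function $G(\eta)=\sum_{x\in S} F(x)V_x(\eta)$. This is not cosmetic. The integrand $L_{\ms E}(F\circ\Psi_N)(\eta)$, for $\eta\in\ms E^x_N$, equals $\sum_{y\neq x} R^{\ms E}(\eta,\ms E^y_N)[F(y)-F(x)]$; the second moments of these boundary rates under $\pi_x$ are not controlled by the hypotheses, so Corollary~\ref{s08} does not directly bound the fluctuation of this function about its conditional expectation. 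Moreover, that conditional expectation would produce $E_{\pi_x}[R^{\ms E}(\cdot,\ms E^y_N)]$ --- the average rate of \cite{bl2} --- rather than the $r_N(x,y)$ of \eqref{31}, which are defined through the enlarged process. By contrast, $L_{\ms E} V_x$ has the explicit form \eqref{19}, is bounded by $\gamma$, its conditional expectation on the partition $\mc P$ is \emph{exactly} the expression with $r_N(x,y)$ (this is the computation \eqref{29}), and the Replacement Lemma bounds $\Vert\,\overline{L_{\ms E} V_x}\,\Vert_{y,-1}^2$ by $C_0\gamma\ms M_x/(\pi_{\ms E}(\ms E^y_N)\mf g_{\mb r,y}\theta_N)$ via \eqref{25}, yielding an error of order $\gamma/\min_z\mf g_{\mb r,z}\to 0$.

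So the fix is: write the Dynkin martingale for $G$, not for $F\circ\Psi_N$; then (Claim~A in the paper) replace the boundary terms $G(\eta^{\ms E}(t\theta_N))$ by $(F\circ\Psi_N)(\eta^{\ms E}(t\theta_N))$ using the fixed-time $L^2$ bound $E_{\pi_{\ms E}}[(\mb 1_{\ms E^x_N}-V_x)^2]\le C_0\ms M_x/(\gamma\theta_N)$ from \eqref{25} together with \eqref{32}; and (Claim~B) replace $L_{\ms E} G$ by $E[L_{\ms E} G\,|\,\mc P]$ via Proposition~\ref{s01}. Your phrase ``remainder of small $\mc H_{-1}$ norm'' conflates these two distinct replacements: the first is an $L^2$ estimate at a fixed time, the second is the time-integrated $\mc H_{-1}$ estimate, and they use different pieces of \eqref{25}. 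Once this is reorganized, the rest of your argument --- passage to the limit at continuity times, well-posedness of the martingale problem for $\mf L$ on the finite set $S$, and the identification $\bb Q(\mb X_0=x_0)=1$ --- goes through exactly as you describe.
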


\begin{proof}
To prove the uniqueness of limit points, we use the martingale
characterization of Markov processes.  Fix a function $F: S\to\bb R$
and a limit point $\bb Q_*$ of the sequence $\bb Q_{\nu_N}$. We claim 
that
\begin{equation}
\label{30}
M^F_t \;:=\; F(X_t) \;-\; F(X_0) \;-\; \int_0^t (\mf L F)(X_s)\, ds
\end{equation}
is a martingale under $\bb Q_*$.

Fix $0\le s< t$ and a bounded function $U:D(\bb R_+, S)\mapsto \bb R$
depending only on $\{X_r : 0\le r\le s\}$ and continuous for the
Skorohod topology. We shall prove that
\begin{equation}
\label{21}
\bb E_{\bb Q_*}\,\big[M^F_t U\big] \;=\; 
\bb E_{\bb Q_*}\,\big[M^F_s U\big]\;.
\end{equation}

Let $G(\eta) = \sum_{x\in S} F(x) V_x(\eta)$, $\eta\in \ms E_N$.
By the Markov property of the trace process $\eta^{\ms E}(t)$,
\begin{equation*}
M^N_t \;=\; G(\eta^{\ms E}(t\theta_N)) \;-\; G(\eta^{\ms E}(0))
\;-\; \int_0^{t \theta_N} (L_{\ms E} G) (\eta^{\ms E}(s))\, ds
\end{equation*}
is a martingale. Let $U^N := U(X^N_{\bs \cdot})$. As $\{M^N_t :
t\ge 0 \}$ is a martingale,
$$
\bb E^{\ms E}_{\nu_N} \big[M^N_t U^N\big]
\, = \, \bb E^{\ms E}_{\nu_N} \big[M^N_s U^N\big]
$$
so that
\begin{equation}
\label{22}
\bb E^{\ms E}_{\nu_N} \Big[ U^N \Big\{ G(\eta^{\ms E}(t\theta_N))
\;-\; G(\eta^{\ms E}(s \theta_N))
\;-\; \int_{s \theta_N}^{t \theta_N} (L_{\ms E} G) (\eta^{\ms E}(r))\, dr
\Big\} \Big ] \,=\, 0\,.
\end{equation}

\noindent{\bf Claim A:} For all $x\in S$,
\begin{equation*}
\lim_{N\to \infty} \sup_{t\ge 0}\, 
\bb E^{\ms E}_{\nu_N} \big[ \,| \mb 1_{\ms E^x_N} (\eta^{\ms E}
(t\theta_N)) - V_x (\eta^{\ms E}(t\theta_N))|\, \big] \;=\;0\;.
\end{equation*}
Indeed, denote by $S_{\ms E}(t)$, $t\ge 0$, the semigroup associated
to the trace process $\eta^{\ms E}(t)$, and by $h_t$ the Radon-Nikodym
derivative $d\nu_N S_{\ms E}(t)/ d\pi_{\ms E}$. It is well known that
$E_{\pi_{\ms E}}[h^2_t] \le E_{\pi_{\ms E}}[h^2_0]$.  Hence, by
Schwarz inequality, the square of the expectation appearing in the
previous displayed formula is bounded above by
\begin{equation*}
E_{\pi_{\ms E}} \Big[ \Big( \frac{d\nu_N}{d\pi_{\ms E}} \Big)^2 \Big]
E_{\pi_{\ms E}} \big[\,| \mb 1_{\ms E^x_N}  - V_x |^2\, \big]\;.
\end{equation*}
To conclude the proof of the claim it remains to recall the definition
of the sequence $\gamma$, the estimate \eqref{25} and the assumption
on the sequence of probability measures $\nu_N$.

It follows from Claim A that 
\begin{equation*}
\lim_{N\to \infty} \sup_{t\ge 0}\,
\bb E^{\ms E}_{\nu_N} \big[ \,| (F\circ\Psi) (\eta^{\ms E}
(t\theta_N)) - G (\eta^{\ms E}(t\theta_N))|\, \big] \;=\;0\;.
\end{equation*}
Therefore, by \eqref{22},
\begin{equation}
\label{23}
\lim_{N\to\infty} \bb E^{\ms E}_{\nu_N} \Big[ U^N \Big\{ \Delta_{s,t}F
\;-\; \int_{s \theta_N}^{t \theta_N} (L_{\ms E} G) (\eta^{\ms E}(r))\, dr
\Big\} \Big ] \,=\, 0\,.
\end{equation}
where $\Delta_{s,t}F = (F\circ\Psi) (\eta^{\ms E}(t\theta_N)) \;-\;
(F\circ\Psi) (\eta^{\ms E}(s \theta_N)) = F(X^N_{t\theta_N}) -
F(X^N_{s\theta_N})$.

\smallskip\noindent{\bf Claim B:} Denote by $\mc P$ the
$\sigma$-algebra generated by the partition $\ms E^z_N$, $z\in S$. For
all $T>0$, $x\in S$,
\begin{equation*}
\lim_{N\to \infty} \bb E^{\ms E}_{\nu_N} \Big[ \sup_{t\le T \theta_N}
\Big| \int_0^t  \Big\{ (L_{\ms E} V_x) (\eta^{\ms E} (s)) - 
E \big[ L_{\ms E} V_x \,\big|\, \mc P \big]  
(\eta^{\ms E}(s)) \Big\} \, ds \Big|\, \Big] \;=\;0\;.
\end{equation*}

By the assumption on the sequence $\nu_N$ and by Proposition
\ref{s01}, the square of the expectation appearing in the previous
formula is bounded by
\begin{equation}
\label{26}
\frac{C_0 \, T\, \theta_N}{\max_{z\in S} \ms M_z} \, 
\sum_{y\in S}  \pi_{\ms E}(\ms E^y_N) 
\, \Vert \overline{L_{\ms E} V_x} \Vert^2_{y,-1} 
\end{equation}
for some finite constant $C_0$, where $\overline{G}$ stands for
$G-E_{\pi_{y}}[G]$.  By \eqref{19}, on the set $\ms E^x_N$, $L_{\ms E}
V_x = - \gamma [1-V_x(\eta)]$. Hence, by the spectral gap o the
reflected process and by \eqref{25},
\begin{equation*}
\Vert \overline{L_{\ms E} V_x} \Vert^2_{x,-1} \;=\; 
\gamma^2\, \Vert \overline{1-V_x} \Vert^2_{x,-1} \;\le \; 
\frac{\gamma^2}{\mf g_{\mb r, x}} \, \Vert 1-V_x \Vert^2_{\pi_{x}}
\;\le\; \frac{C_0 \, \gamma\, \ms M_x}
{\pi_{\ms E} (\ms E^x_N)\, \mf g_{\mb r, x} \, \theta_N}
\end{equation*}
for some finite constant $C_0$. Similarly, since $L_{\ms E} V_x =
\gamma V_x(\eta)$ on the set $\ms E^y_N$, $y\not = x$,
\begin{equation*}
\Vert \overline{L_{\ms E} V_x} \Vert^2_{y,-1} 
\;\le\; \frac{C_0 \, \gamma\, \ms M_x}
{\pi_{\ms E} (\ms E^y_N)\, \mf g_{\mb r, y} \, \theta_N}\;\cdot
\end{equation*}
Therefore, the sum appearing in \eqref{26} is bounded by $C_0 T \,
|S|\, \gamma \, \max_{z\in S} \mf g^{-1}_{\mb r, z} $ which vanishes as
$N\uparrow\infty$ by definition of $\gamma$, proving Claim B.

It follows from \eqref{23} and Claim B that 
\begin{equation}
\label{27}
\lim_{N\to\infty} \bb E^{\ms E}_{\nu_N} \Big[ U^N \Big\{ \Delta_{s,t}F
\;-\; \int_{s}^{t} 
\theta_N \, E \big[ L_{\ms E} G \,\big|\, \mc P \big] 
(\eta^{\ms E}(r\theta_N))\, dr \Big\} \Big ] \,=\, 0\,.
\end{equation}

We affirm that
\begin{equation}
\label{29}
E \big[ L_{\ms E} G \,\big|\, \mc P \big] (\eta) \;=\; 
\sum_{x\in S} \mb 1\{\eta\in \ms E^x_N\} \sum_{y\in S} r_N(x,y) 
[F(y)-F(x)]\;.
\end{equation}
Indeed, by \eqref{19},
\begin{equation*}
E \big[ L_{\ms E} V_x \,\big|\, \mc P \big] \;=\;
\begin{cases}
\displaystyle 
- \gamma \sum_{\eta\in \ms E^x_N} \frac{\pi_{\ms E} (\eta)}
{\pi_{\ms E} (\ms E^x_N)}
\, \bb P^{\star, \gamma}_{\eta}\big[ H_{\breve{\ms E}^{\star, x}_N}  <
H_{\ms E^{\star, x}_N} \big]\;,  & \eta\in \ms E^{x}_N\;,\\
\displaystyle
\gamma \sum_{\eta\in \ms E^{y}_N} \frac{\pi_{\ms E} (\eta)}
{\pi_{\ms E} (\ms E^y_N)}
\, \bb P^{\star, \gamma}_{\eta}\big[ H_{\ms E^{\star, x}_N} <
H_{\breve{\ms E}^{\star, x}_N} \big] \;, & \eta\in \ms E^{y}_N \;,\; y\not
= x\;.
\end{cases}
\end{equation*} 
By \eqref{28}, on the set $\ms E^{x}_N$, $E [ L_{\ms E} V_x \, |\, \mc
P ] = -\, \sum_{y\not = x} r_N(x,y)$, and by \eqref{31}, on the set $\ms
E^{y}_N$, $E [ L_{\ms E} V_x \, |\, \mc P ] = r_N(y,x)$. To conclude
the proof of \eqref{29} it remains to recall the definition of $G$.

By \eqref{27}, \eqref{29} and by definition of $X^N_t$,
\begin{equation*}
\lim_{N\to\infty} \bb E^{\ms E}_{\nu_N} \Big[ U^N \Big\{ 
\Delta_{s,t}F 
\;-\; \int_{s}^{t}  \sum_{y\in S} \theta_N \, r_N(X^N_{r\theta_N},y) 
\, [F(y)-F(X^N_{r\theta_N})] \,  dr \Big\} \Big ] \,=\, 0\,.
\end{equation*}
Since $\Delta_{s,t}F = F(X^N_{t\theta_N})- F(X^N_{s\theta_N})$, since
$U$ has been assumed to be continuous for the Skorohod topology and
since $\bb Q_*$ is a limit point of the sequence $\bb Q_{\nu_N}$, by
assumption ({\bf L1})
\begin{equation*}
E_{\bb Q_*} \Big[ U \Big\{ F(X_t) - F(X_s)
\;-\; \int_{s}^{t}  \sum_{y\in S} r (X_r,y) 
\, [F(y)-F(X_r)] \,  dr \Big\} \Big ] \,=\, 0\,,
\end{equation*}
proving \eqref{30} and the proposition. 
\end{proof}

\smallskip\noindent{\bf Proof of Theorem \ref{s02}.}  The proof is
similar to the one of Proposition \ref{s10}. We prove the convergence
of the one-dimensional distributions. The extension to higher
dimensional distributions is clear. Fix a function $F:S\to \bb R$. We
claim that for every $T\ge 0$,
\begin{equation}
\label{48}
\limsup_{N\to\infty} \sup_{0\le s<t\le T} \Big| \, 
\bb E_{\nu_N} \Big[ F(X^N_{t\theta_N})- F(X^N_{s\theta_N})
-\int_s^t (\mf L F)(X^N_{r\theta_N}) \, dr \Big] \, \Big| \;=\; 0\;.
\end{equation}
Recall the definition of the function $G:\ms E_N \to\bb R$ introduced
in the proof of the previous proposition. By Claim A and since
$G(\eta^{\ms E}(t\theta_N)) - \int_0^t \theta_N (L_{\ms E}G)
(\eta^{\ms E}(s\theta_N)) \, ds$ is a martingale, to prove \eqref{48},
it is enough to show that
\begin{equation*}
\limsup_{N\to\infty} \sup_{0\le t\le T} \Big| \, 
\bb E_{\nu_N} \Big[ \int_0^t \theta_N (L_{\ms E}G) (\eta^{\ms
  E}(r\theta_N)) \, dr  
-\int_0^t (\mf L F)(X^N_{r\theta_N}) \, dr \Big] \, \Big| \;=\; 0\;.
\end{equation*}
By Claim B, by the identity \eqref{29} and by the definition of
$X^N_t$, the proof of \eqref{48} is further reduced to the proof that
\begin{equation*}
\limsup_{N\to\infty} \sup_{0\le t\le T} \Big| \, 
\bb E_{\nu_N} \Big[ \int_0^t (\mf L_N F)(X^N_{r\theta_N})\, dr  
-\int_0^t (\mf L F)(X^N_{r\theta_N}) \, dr \Big] \, \Big| \;=\; 0\;,
\end{equation*} 
where $(\mf L_N F)(x) = \sum_{y\in S} \theta_N \, r_N(x,y) 
\, [F(y)-F(x)]$. To conclude the proof of \eqref{48}, it remains to
recall assumption ({\bf L1}).

It follows from \eqref{48} that the sequence $f_N(t) = \bb E_{\nu_N} [
F(X^N_{t\theta_N})]$ is equicontinuous in any compact interval
$[0,T]$. Moreover, if $F$ is an eigenfunction of the operator $\mf L$
associated to an eigenvalue $\lambda$, all limit points $f(t)$ of the
subsequence $f_N(t)$ are such that
\begin{equation*}
f(t) \,-\, F(x_0) \,=\, \int_0^t \lambda\, f(r)\, dr \;,\quad 0\le
t\le T \;,
\end{equation*}
which yields uniqueness of limit points.
\qed

\smallskip\noindent{\bf Proof of Theorem \ref{s00}.} Recall that
$T^{\rm mix}_{\mb r, x}$, $x\in S$, stands for the mixing time of the
reflected process $\eta^{\mb r, x} (t)$.  We prove that the
one-dimensional distributions converge. The extension to higher
dimensional distributions is straightforward.  In view of Theorem
\ref{s02}, it is enough to show that for each function $F:S\to \bb R$,
\begin{equation}
\label{46}
\lim_{N\to\infty} \Big|\, \bb E^{\ms E}_{\nu_N} \big[ F(X^N_{t\theta_N}) \big] \;-\;
\bb E^{\ms E}_{\pi_{x_0}} \big[ F(X^N_{t\theta_N}) \big] \, \Big| \;=\;0 \;. 
\end{equation}
Let $T_N$ be a sequence satisfying the assumptions of the theorem.  We
may write $\bb E^{\ms E}_{\nu_N} [ F(X^N_{t\theta_N}) ]$ as
\begin{equation*}
\bb E^{\ms E}_{\nu_N} \big[ \mb 1\{ H_{\breve{\ms E}^{x_0}_N} > T_N\}\,
F(X^N_{t\theta_N}) \big] \;+\; 
\bb E^{\ms E}_{\nu_N} \big[ \mb 1\{ H_{\breve{\ms E}^{x_0}_N} \le T_N\}\,
F(X^N_{t\theta_N}) \big] \;.
\end{equation*} 
The second term is absolutely bounded by $C_0 \bb P^{\ms E}_{\nu_N} [
H_{\breve{\ms E}^{x_0}_N} \le T_N]$ for some finite constant
$C_0$ independent of $N$ and which may change from line to line. By
hypothesis, this latter probability vanishes as $N\uparrow\infty$. By
the Markov property, the first term in the previous displayed equation
is equal to
\begin{equation*}
\bb E^{\ms E}_{\nu_N} \Big[ \mb 1\{ H_{\breve{\ms E}^{x_0}_N} > T_N\}\,
\bb E^{\ms E}_{\eta (T_N)} \big[ F(X^N_{t\theta_N - T_N})
\big]\, \Big] \;.
\end{equation*} 
On the set $\{ H_{\breve{\ms E}^{x_0}_N} > T_N\}$ we may
couple the trace process with the reflected process in such a way that
$\eta^{\ms E} (t) = \eta^{\mb r, x_0}(t)$ for $t\le T_N$. The
previous expectation is thus equal to
\begin{equation*}
\bb E^{\ms E}_{\nu_N} \Big[ \,
\bb E^{\ms E}_{\eta^{\mb r, x_0} (T_N)} \big[ F(X^N_{t\theta_N - T_N})
\big]\, \Big] \;-\;
\bb E^{\ms E}_{\nu_N} \Big[ \mb 1\{ H_{\breve{\ms E}^{x_0}_N} \le T_N\}\,
\bb E^{\ms E}_{\eta^{\mb r, x_0} (T_N)} \big[ F(X^N_{t\theta_N - T_N})
\big]\, \Big]. 
\end{equation*}
As before, the second term vanishes as $N\uparrow\infty$. The first
expectation is equal to
\begin{equation*}
\bb E^{\ms E}_{\pi_{x_0}} \big[ F(X^N_{t\theta_N - T_N}) \big]
\;+\; R_N(t)\;,
\end{equation*}
where $R_N(t)$ is absolutely bounded by $C_0 \Vert \nu_N S^{\mb r, x_0}
(T_N) - \pi_{x_0} \Vert_{\rm TV}$. In this formula, $\Vert \mu
- \nu \Vert_{\rm TV}$ stands for the total variation distance between
$\mu$ and $\nu$ and $S^{\mb r, x} (t)$ represents the semi-group of
the reflected process. By definition of the mixing time, this last
expression is less than or equal to $(1/2)^{(T_N/T^{\rm mix}_{\mb
    r, x})}$, which vanishes as $N\uparrow\infty$ by assumption.

At this point we repeat the same argument with the measure $\nu_N$
replaced by the local equilibrium $\pi_{x_0}$. To estimate $\bb P^{\ms
  E}_{\pi_{x_0}} [ H_{\breve{\ms E}^{x_0}_N} \le T_N]$ we write this
expression as
\begin{equation}
\label{52}
\sum_{\eta\in \ms E^{x_0}_N} \big\{ 
\pi_{x_0}(\eta) - \pi^*_{x_0}(\eta) \big\}\,
\bb P^{\ms E}_{\eta} \big[ H_{\breve{\ms E}^{x_0}_N} \le T_N
\big] \;+\; \bb P^{\ms E}_{\pi^*_{x_0}} \big[ H_{\breve{\ms E}^{x_0}_N} 
\le T_N \big] \;,  
\end{equation}
where $\pi^*_{x_0}$ is the quasi-stationary measure associated to the
trace process $\eta^{\ms E}(t)$ killed when it hits $\breve{\ms
  E}^{x_0}_N$.  The first term is less than or equal to
\begin{equation*}
\sum_{\eta\in \ms E^{x_0}_N} \pi_{x_0}(\eta) \, \Big|
\frac{\pi^*_{x_0}(\eta)}{\pi_{x_0}(\eta)} - 1  \Big| \;\le\;
\Big\{ \sum_{\eta\in \ms E^{x_0}_N} \pi_{x_0}(\eta) \, \Big(
\frac{\pi^*_{x_0}(\eta)}{\pi_{x_0}(\eta)} - 1  \Big)^2 \Big\}^{1/2}\;.
\end{equation*}
By Proposition 2.1, (17) and Lemma 2.2 in \cite{bg11}, the expression
inside the square root on the right hand side of the previous formula
is bounded by $\varepsilon_{x_0} /[1-\varepsilon_{x_0}]$, where
$\varepsilon_{x_0} = E_{\pi_{x_0}}[R^{\ms E}(\eta, \breve{\ms
  E}^{x_0}_N)]/\mf g_{\mb r, x_0}$. By \eqref{43}, $\varepsilon_{x_0}
\le C_0 (\theta_N \mf g_{\mb r, x_0})^{-1}$ for some finite constant
$C_0$ and by hypothesis, $\theta_N^{-1} \ll \mf g_{\mb r, x_0}$ . This
shows that the first term in \eqref{52} vanishes as $N\uparrow\infty$.

Finally, since $\pi^*_{x_0}$ is the quasi-stationary state, under $\bb
P_{\pi^*_x}$, the hitting time of $\breve{\ms E}^{x}_N$,
$H_{\breve{\ms E}^{x}}$, has an exponential distribution whose
parameter we denote by $\phi^*_x$.  By \cite[Lemma 2.2]{bg11},
$\phi^*_x$ is bounded by $E_{\pi_x}[R^{\ms E}(\eta, \breve{\ms
  E}^{x})] \le C_0/\theta_N$, for some finite constant $C_0$. Hence,
\begin{equation*}
\bb P^{\ms E}_{\pi^*_x} \big[ H_{\breve{\ms E}^{x}} \le T_N
\big] \;=\; 1 - e^{-\phi^*_x T_N} \;\le\;
1 - e^{- C_0 (T_N/\theta_N)}\;,
\end{equation*}
an expression which vanishes as $N\uparrow\infty$.
\qed

\smallskip\noindent{\bf Proof of Lemma \ref{s09}.}  Let
$\nu_N$ be a sequence of probability measures satisfying \eqref{32}.
By Schwarz inequality, the square of the expectation appearing in the
statement of the lemma is bounded above by
\begin{equation*}
\frac 1{\pi (\ms E_N)} \, E_{\pi_{\ms E}} \Big[ 
\Big( \frac{d\nu_N}{d\pi_{\ms E}} \Big)^2 \Big] \,
\bb E_{\pi}
\Big[ \Big(\int_0^t {\bs 1}\{\eta (s\theta_N) \in \Delta_N\}
\, ds \Big)^2\Big]
\end{equation*}
By assumption \eqref{32}, the first expectation is bounded by $C_0
\min_{x\in S} \ms M_x^{-1}$. Since $\ms M_x \ge \min_{y} \pi_{\ms
  E}(\ms E^y_N)$, $\min_{x\in S} \ms M_x^{-1} \le \max _{y\in S}
\pi_{\ms E}(\ms E^y_N)^{-1}$. On the other hand, by Schwarz
inequality, the second expectation is less than or equal to
\begin{equation*}
t \, \bb E_{\pi}
\Big[ \int_0^t {\bs 1}\{\eta (s\theta_N) \in \Delta_N\}
\, ds \, \Big] \;=\; t^2 \pi (\Delta_N)\;,
\end{equation*}
which concludes the proof. \qed

\smallskip\noindent{\bf Proof of Lemma \ref{s25}.}  The proof of this
result is similar to the previous one with obvious
modifications. Consider a sequence of initial states $\eta^N$ in $\ms
D^x_N$. By the Markov property, the expectation appearing in
\eqref{58} with $\nu_N = \delta_{\eta^N}$ is bounded above by
\begin{equation*}
\begin{split}
& \bb E_{\eta^N} \Big[  \mb 1\big\{ H_{(\ms F^{x}_N)^c} >
T_N \}\, \bb E_{\eta (T_N)} 
\Big[\int_0^t {\bs 1}\{\eta (s\theta_N) \in \Delta_N\}
\, ds \,\Big]\, \Big] \\
& \quad + \; T_N/\theta_N \;+ \;  t \, 
\bb P_{\eta^N} \big[ H_{(\ms F^{x}_N)^c} \le T_N \;\big] \;,
\end{split}
\end{equation*}
where we replaced $t-T_N$ by $t$ in the time integral.  By assumption,
the second and the third term vanish as $N\uparrow\infty$. On the set
$\{ H_{(\ms F^{x}_N)^c} > T_N \}$ we may replace $\eta (T_N)$ by
$\eta^{\mb r, \ms F^{x}} (T_N)$, where $\eta^{\mb r, \ms F^{x}} (t)$
stands for the process $\eta (t)$ reflected at $\ms F^{x}_N$. After
this replacement, we may remove the indicator and estimate the
expectation by
\begin{equation*}
t\, \Vert \delta_{\eta^N} S^{\mb r, \ms F^{x}} (T_N) - \pi_{\ms F^x} 
\Vert_{\rm  TV} \;+\; \bb E_{\pi_{\ms F^x}} \Big[  
\int_0^t {\bs 1}\{\eta (s\theta_N) \in \Delta_N\}
\, ds \,\Big]\;,
\end{equation*}
where $S^{\mb r, \ms F^{x}} (t)$ represents the semigroup of the
reflected process $\eta^{\mb r, \ms F^{x}} (t)$ and $\pi_{\ms F^x}$
the measure $\pi$ conditioned to $\ms F^{x}_N$.  The first term
vanishes by definition of $T_N$, while the second one is bounded by $t
\pi(\Delta_N)/\pi(\ms F^{x}_N)$, which vanishes in view of condition
({\bf L3}).  \qed

\smallskip\noindent{\bf Proof of Proposition \ref{s16}.}  The proof of
this proposition relies on a comparison between the original process
and the trace process presented below in equations \eqref{55} and
\eqref{54}.  Let $\{T_{\ms E} (t) \,|\, t\ge 0\}$ be the time spent on
the set $\ms E_N$ by the process $\eta (s)$ in the time interval
$[0,t]$,
\begin{equation*}
T_{\ms E} (t) \;=\; \int_0^t \mb 1\{\eta (s)\in \ms E_N\}\, ds\;.
\end{equation*}
Denote by $S_{\ms E} (t)$ the generalized inverse of $T_{\ms E} (t)$,
$S_{\ms E} (t) = \sup \{s\ge 0 \,|\, T_{\ms E} (s) \le t\}$, and
recall that the trace process is defined as $\eta^{\ms E}(t) = \eta
(S_{\ms E} (t))$.

By definition of the trace process, for every $t\ge 0$,
\begin{equation}
\label{55}
\int_0^t \mb 1\{ \eta (s) \in \ms E^{x}_N \} \, ds 
\;\le\;
\int_0^t \mb 1\{ \eta^{\ms E} (s) \in \ms E^{x}_N \} \, ds \;.
\end{equation}
On the other hand,
\begin{equation*}
\int_0^{t} \mb 1\{ \eta^{\ms E} (r) \in \ms E^{x}_N \} \, dr
\;=\; \int_0^{t} \mb 1\{ \eta (S_{\ms E} (r)) \in \ms E^{x}_N \} \, dr\;.
\end{equation*}
By a change of variables, the previous integral is equal to
\begin{equation*}
\int_0^{S_{\ms E} (t)} \mb 1\{ \eta (r) \in \ms E^{x}_N \} \, dr\;.
\end{equation*}
Let $T_\Delta (t)$, $t\ge 0$, be the time spent by the process
$\eta(s)$ on the set $\Delta_N$ in the time interval $[0,t]$,
$T_\Delta (t) = \int_0^{t} \mb 1\{\eta(s) \in \Delta_N\} \, ds$.
Since $T_{\ms E} (t) + T_\Delta (t) = t$, on the set $T_\Delta (t_0)
< \delta$ and for $t\le t_0 - \delta$, $T_{\ms E} (t + \delta) > t$,
so that $S_{\ms E} (t) \le t+\delta$.  Putting together all previous
estimates we get that on the set $T_\Delta (t_0) < \delta$ and for
$t\le t_0 - \delta$,
\begin{equation}
\label{54}
\int_0^t \mb 1\{ \eta^{\ms E} (s) \in \ms E^{x}_N \} \, ds 
\;\le\;
\int_0^{t+\delta} \mb 1\{ \eta (s) \in \ms E^{x}_N \} \, ds\;. 
\end{equation}

We turn now to the proof of the proposition. We may rewrite the time
integral appearing on the left hand side of \eqref{56} as
\begin{equation}
\label{57}
\bb E_{\nu_N} \Big[ \int_0^t  \mb 1\{ \eta( r\theta_N) \in \ms
E^{x}_N \} \, dr \Big]\;.
\end{equation}
By \eqref{55}, this expectation is bounded above by 
\begin{equation*}
\bb E_{\nu_N}
\Big[ \int_0^t \mb 1\{ \eta^{\ms E}( r\theta_N) \in \ms E^{x}_N \} \,
dr \Big] \;=\; \bb E_{\nu_N}
\Big[ \int_0^t \mb 1\{ X^N_{r\theta_N} =x \} \, dr \Big]\;.
\end{equation*}
By Theorem \ref{s00}, the right hand side converges as
$N\uparrow\infty$ to the right hand side of \eqref{56}.

Fix $\delta >0$. The expectation \eqref{57} is bounded below by
\begin{equation*}
\bb E_{\nu_N} \Big[ \mb 1\{ T_\Delta (t\theta_N) < \delta \theta_N\} \,
\int_0^{t} \mb 1\{ \eta( r\theta_N) \in \ms
E^{x}_N \} \, dr \Big]\;.
\end{equation*}
By \eqref{54}, this expression is bounded below by
\begin{equation*}
\begin{split}
& \bb E_{\nu_N} \Big[ \mb 1\{ T_\Delta (t\theta_N) < \delta \theta_N\} \,
\int_0^{t-\delta} \mb 1\{ \eta^{\ms E}( r\theta_N) \in \ms
E^{x}_N \} \, dr \Big] \\
&\qquad \ge\; 
\bb E_{\nu_N} \Big[ 
\int_0^{t-\delta} \mb 1\{ \eta^{\ms E}( r\theta_N) \in \ms
E^{x}_N \} \, dr \Big] \;-\; t \, \bb P_{\nu_N} \big[ T_\Delta
(t\theta_N) \ge \delta \theta_N \big] \;.
\end{split}
\end{equation*}
By \eqref{58}, the second term vanishes as $N\uparrow\infty$, while by
Theorem \ref{s00} the second one converges to the right hand side of
\eqref{56} as $N\uparrow\infty$ and then $\delta\downarrow 0$.
\qed

\smallskip\noindent{\bf The jump rates.}  Recall the definition
\eqref{31} of the rates $r_N(x,y)$. For all $x\in S$,
\begin{equation}
\label{50}
\sum_{y\not = x} r_N(x,y) \;\le\; E_{\pi_x} 
\big[ R^{\ms E}(\eta, \breve{\ms E}^{x}_N) \big] \;.
\end{equation}
Indeed, by \eqref{28} and by the Dirichlet principle,
\begin{equation*}
\pi_\star(\ms E^{x}_N) \sum_{y\not = x} r_N(x,y) \;=\; 
\Cap_\star (\ms E^{\star, x}_N, \breve{\ms E}^{\star, x}_N) \;=\;
\inf_f \< (-L_\star f), f\>_{\pi_\star} \;,
\end{equation*}
where the infimum is carried over all functions $f:\ms E_N \cup \ms
E^\star_N \to \bb R$ equal to $1$ on $\ms E^{\star, x}_N$ and equal to
$0$ on $\breve{\ms E}^{\star, x}_N$. Taking $f= \mb 1\{\ms E^{x}_N
\cup \ms E^{\star, x}_N\}$ and computing the Dirichlet form of this
function we get \eqref{50}. 

\section{On assumptions  (L4) and (L4U)}
\label{sec6}

We present in this section two estimates of $\bb P^{\ms E}_{\eta} [
H_{\breve{\ms E}^{x}_N} \le T_N]$.  We start with a bound of this
probability in terms of an equilibrium potential. Denote by
$W^\star_{x, \gamma}$, $x\in S$, $\gamma>0$, the equilibrium potential
between $\breve{\ms E}^{x}_N \cup \breve{\ms E}^{\star, x}_N$ and $\ms
E^{\star, x}_N$ for the $\gamma$-enlargement of the trace process
$\eta^{\ms E}(t)$:
\begin{equation*}
W^\star_{x, \gamma} (\eta) \;=\; \bb P^{\star,\gamma}_\eta
\big[ H_{\breve{\ms E}^{x}_N \cup \breve{\ms E}^{\star, x}_N} <
H_{\ms E^{\star, x}_N} \big]
\;=\; \bb P^{\star,\gamma}_\eta
\big[ H_{\breve{\ms E}^{x}_N} < H_{\ms E^{\star, x}_N} \big]\;.
\end{equation*}

\begin{lemma}
\label{s11}
Fix $x\in S$. Then, for all $\eta\in\ms E^x_N$, $\gamma>0$ and $A>0$,
\begin{equation*}
\begin{split}
& \bb P^{\ms E}_{\eta} \big[ H_{\breve{\ms E}^{x}_N} \le \gamma^{-1} \big]
\;\le\; e \, W^\star_{x, \gamma} (\eta) \; , \\
& \qquad W^\star_{x, \gamma} (\eta) \;-\;
\frac {e^{-A}} {1- e^{-A}} \;\le\; 
\bb P^{\ms E}_{\eta} \big[ H_{\breve{\ms E}^{x}_N} \le A \gamma^{-1} \big]
\;.
\end{split}
\end{equation*}
\end{lemma}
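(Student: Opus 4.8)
The plan is to relate the hitting time $H_{\breve{\ms E}^{x}_N}$ under the trace process $\eta^{\ms E}(t)$ to the hitting event $\{H_{\breve{\ms E}^{x}_N} < H_{\ms E^{\star, x}_N}\}$ under the enlarged process $\eta^\star(t)$, using the fact that in the enlargement, from any state of $\ms E^x_N$ the process jumps to its star-copy at rate $\gamma$ and then returns at rate $\gamma$. The elementary observation is that, by the definition of the $\gamma$-enlargement of the trace process, the $\ms E_N$-marginal of $\eta^\star(t)$, watched only while it sits in $\ms E_N$, evolves exactly as the trace process $\eta^{\ms E}(t)$; the only extra thing $\eta^\star$ does is make independent exponential($\gamma$) excursions into $\ms E^\star_N$. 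First I would make this coupling precise: run $\eta^\star(t)$ from $\eta \in \ms E^x_N$, and let $\sigma$ be the first time it jumps into $\breve{\ms E}^{\star,x}_N$ (equivalently, by the structure of the jump rates, $\sigma = H_{\breve{\ms E}^{\star,x}_N}$, since the process can only reach a star-copy of a well $y\neq x$ by first visiting $\ms E^y_N \subset \breve{\ms E}^x_N$). The point is that $W^\star_{x,\gamma}(\eta)$ is exactly the probability that $\eta^\star$ visits $\breve{\ms E}^x_N$ before its first return to $\ms E^{\star,x}_N$, i.e. before the first excursion-clock into $\ms E^{\star,x}_N$ rings.

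For the upper bound: under $\bb P^{\ms E}_\eta$, the event $\{H_{\breve{\ms E}^x_N} \le t\}$ for the trace process translates, in the enlarged picture, into the event that within the first ``$\ms E_N$-local time'' $t$ the process reaches $\breve{\ms E}^x_N$. Each time $\eta^\star$ enters $\ms E^{\star,x}_N$ it restarts (by the strong Markov property and reversibility of $\pi_\star$) a fresh attempt, but actually the cleanest route is: split the trajectory of $\eta^\star$ by its successive visits to $\ms E^{\star,x}_N$. Between consecutive visits to $\ms E^{\star,x}_N$ (i.e. during one ``$\ms E^x_N$-sojourn block''), the $\ms E_N$-part behaves like the trace process run for an exponential($\gamma$) amount of local time. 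If the trace process would hit $\breve{\ms E}^x_N$ by time $\gamma^{-1}$, then on the event that this sojourn block lasts (in $\ms E_N$-local time) at least $\gamma^{-1}$ — an event of probability $e^{-1}$ independent of the block's trajectory — the enlarged process hits $\breve{\ms E}^x_N$ before returning to $\ms E^{\star,x}_N$. Hence $e^{-1}\,\bb P^{\ms E}_\eta[H_{\breve{\ms E}^x_N}\le\gamma^{-1}] \le W^\star_{x,\gamma}(\eta)$, which is the first inequality. I would phrase this as: condition on the exponential($\gamma$) clock $\tau$ governing the first jump $\ms E^x_N \to \ms E^{\star,x}_N$; then $W^\star_{x,\gamma}(\eta) = \bb P^{\ms E}_\eta[H_{\breve{\ms E}^x_N} < \tau] \ge \bb P^{\ms E}_\eta[H_{\breve{\ms E}^x_N} \le \gamma^{-1}]\,\bb P[\tau > \gamma^{-1}] = e^{-1}\bb P^{\ms E}_\eta[H_{\breve{\ms E}^x_N}\le\gamma^{-1}]$, using independence of $\tau$ from the trace trajectory.

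For the lower bound: now $W^\star_{x,\gamma}(\eta) = \bb P^{\ms E}_\eta[H_{\breve{\ms E}^x_N} < \tau]$ with $\tau \sim$ Exp($\gamma$) independent of the trace process. Split according to whether $H_{\breve{\ms E}^x_N} \le A\gamma^{-1}$ or not: on the first event, $\bb P[\tau > A\gamma^{-1}] = e^{-A}$ gives at least that fraction; on the complement we bound crudely. More precisely, $W^\star_{x,\gamma}(\eta) \le \bb P^{\ms E}_\eta[H_{\breve{\ms E}^x_N} \le A\gamma^{-1}] + \bb P^{\ms E}_\eta[H_{\breve{\ms E}^x_N} > A\gamma^{-1},\ H_{\breve{\ms E}^x_N} < \tau]$, and the last probability is at most $\sum_{k\ge 1}\bb P[kA\gamma^{-1} < H_{\breve{\ms E}^x_N} \le (k+1)A\gamma^{-1}]\,\bb P[\tau > kA\gamma^{-1}] \le \sum_{k\ge 1} e^{-kA} = e^{-A}/(1-e^{-A})$. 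Rearranging yields the second inequality. The main obstacle, and the step I would write most carefully, is the rigorous justification of the coupling in the first paragraph — namely that the enlarged process, observed on its $\ms E_N$-sojourns and stopped at the first star-excursion clock, really does reproduce the law of the trace process run up to an independent Exp($\gamma$) time — since this is what makes the identity $W^\star_{x,\gamma}(\eta) = \bb P^{\ms E}_\eta[H_{\breve{\ms E}^x_N} < \tau]$ valid; everything after that is the elementary exponential-clock estimate above.
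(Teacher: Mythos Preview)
Your proposal is correct and follows essentially the same route as the paper. Both arguments hinge on the identity $W^\star_{x,\gamma}(\eta) = \bb P^{\ms E}_\eta[H_{\breve{\ms E}^x_N} < \tau]$ with $\tau$ an independent exponential$(\gamma)$ time; the paper first derives this analytically (showing $L_{\ms E} W^\star_{x,\gamma} = \gamma W^\star_{x,\gamma}$ on $\ms E^x_N$ with boundary value $1$, hence $W^\star_{x,\gamma}(\eta) = \bb E^{\ms E}_\eta[e^{-\gamma H_{\breve{\ms E}^x_N}}]$) and then invokes exactly your coupling for the lower bound, while you go straight to the coupling --- but the resulting inequalities and their proofs are the same (Markov's inequality for the upper bound, splitting on $\{\tau > A\gamma^{-1}\}$ for the lower).
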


\begin{proof}
Fix $x\in S$. By definition of the equilibrium potential,
\begin{equation*}
\begin{cases}
(L_{\star} W^\star_{x, \gamma}) (\eta) = 0 & 
\eta\in \ms E^x_N\;,\\
W^\star_{x, \gamma} (\eta) = 1 & \eta\in \breve{\ms E}^{x}_N 
\cup \breve{\ms E}^{\star, x}_N \;,\\
W^\star_{x, \gamma} (\eta) = 0 & \eta\in \ms E^{\star, x}_N  \;.
\end{cases}
\end{equation*}
By definition of the generator $L_{\star}$ and since the equilibrium
potential $W^\star_{x, \gamma}$ vanishes on the set $\ms E^{\star, x}_N$, 
on the set $\ms E^x_N$, we have that
\begin{equation*}
(L_{\ms E} W^\star_{x, \gamma}) (\eta) =  \gamma W^\star_{x, \gamma} (\eta) 
\;,\quad \eta\in \ms E^x_N\;.
\end{equation*}
Since $W^\star_{x, \gamma}$ is equal to $1$ on the set $\breve{\ms
  E}^{x}_N$, we conclude that
\begin{equation*}
W^\star_{x, \gamma} (\eta) \;=\; \bb E^{\ms E}_{\eta} 
[ \exp\{-\gamma H_{\breve{\ms E}^{x}_N}\} ] \;, \quad
\eta\in \ms E_N\;.
\end{equation*}

On the other hand, by Tchebychev inequality and by the previous
identity,
\begin{equation*}
\bb P^{\ms E}_{\eta} \big[ H_{\breve{\ms E}^{x}_N} \le \gamma^{-1}
\big] \;=\; \bb P^{\ms E}_{\eta} \big[ e^{-\gamma 
H_{\breve{\ms E}^{x}_N}} \ge e^{-1} \big]
\;\le\; e\,
\bb E^{\ms E}_{\eta} \big[ e^{-\gamma H_{\breve{\ms E}^{x}_N}}
\big] \;=\; e\, W^\star_{x, \gamma} (\eta) \;.
\end{equation*}

Conversely, fix $A>0$ and let $T(\gamma)$ be an exponential time of
parameter $\gamma$ independent of the trace process $\eta^{\ms
  E}(s)$. It is clear that for $\eta\in \ms E^{x}_N$,
\begin{equation*}
W^\star_{x, \gamma} (\eta) \;=\;
\bb P^{\star, \gamma}_{\eta} \big[ H_{\breve{\ms E}^{x}_N} <  
H_{\ms E^{\star, x}_N} \big ] \;=\;  
\bb P^{\ms E}_{\eta} \big[ H_{\breve{\ms E}^{x}_N} <  
T(\gamma) \big]\;.
\end{equation*}
By definition of $T(\gamma)$, the last probability is equal to
\begin{equation*}
\int_0^\infty \bb P^{\ms E}_{\eta} \big[ H_{\breve{\ms E}^{x}_N} <  
t \big]\, \gamma e^{-\gamma t} \, dt \;\le\;
\bb P^{\ms E}_{\eta} \big[ H_{\breve{\ms E}^{x}_N} \le A \gamma^{-1}
\big] (1- e^{-A}) \;+\; e^{-A} \;.
\end{equation*}
An elementary computation permits to conclude the proof of the lemma.
\end{proof}

The second assertion of the previous lemma shows that we do not lose
much in the first one.

\begin{corollary}
\label{s19}
Let $\nu_N$ be a probability measure concentrated on the set $\ms
E^{x}_N$. Then, for all $\gamma>0$,
\begin{equation*}
\bb P^{\ms E}_{\nu_N} \big[ H_{\breve{\ms E}^{x}_N} \le \gamma^{-1}
\big]^2 \;\le\; \frac{2\,e^2}{\gamma}\,
E_{\pi_{\ms E}} \Big[ \Big( \frac{\nu_N}{\pi_{\ms E}} \Big)^2 \Big]\,
\Cap_{\star} (\ms E^{\star, x}_N , \breve{\ms E}^{x}_N)\;.
\end{equation*}
\end{corollary}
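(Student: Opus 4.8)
The plan is to combine the first inequality of Lemma~\ref{s11} with the variational (Dirichlet-principle) characterization of the capacity $\Cap_{\star}(\ms E^{\star, x}_N, \breve{\ms E}^{x}_N)$, exactly in the spirit of the passage from Proposition~\ref{s01} to Corollary~\ref{s08}. First, by Lemma~\ref{s11} applied to each starting configuration $\eta \in \ms E^x_N$ and then averaged over $\nu_N$,
\begin{equation*}
\bb P^{\ms E}_{\nu_N} \big[ H_{\breve{\ms E}^{x}_N} \le \gamma^{-1} \big]
\;=\; \sum_{\eta\in \ms E^x_N} \nu_N(\eta)\,
\bb P^{\ms E}_{\eta} \big[ H_{\breve{\ms E}^{x}_N} \le \gamma^{-1} \big]
\;\le\; e\, \sum_{\eta\in \ms E^x_N} \nu_N(\eta)\, W^\star_{x,\gamma}(\eta)\;,
\end{equation*}
and then write $\nu_N(\eta) = (\nu_N/\pi_{\ms E})(\eta)\, \pi_{\ms E}(\eta)$ and apply Schwarz inequality with respect to $\pi_{\ms E}$ to bound the right-hand side by $e\, E_{\pi_{\ms E}}[(\nu_N/\pi_{\ms E})^2]^{1/2}\, (\sum_{\eta} \pi_{\ms E}(\eta)\, W^\star_{x,\gamma}(\eta)^2)^{1/2}$. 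Squaring gives a factor $e^2\, E_{\pi_{\ms E}}[(\nu_N/\pi_{\ms E})^2]$ times $\sum_{\eta\in\ms E^x_N} \pi_{\ms E}(\eta)\, W^\star_{x,\gamma}(\eta)^2$.

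The remaining task is to show that $\sum_{\eta\in\ms E^x_N} \pi_{\ms E}(\eta)\, W^\star_{x,\gamma}(\eta)^2 \le (2/\gamma)\,\Cap_{\star}(\ms E^{\star, x}_N, \breve{\ms E}^{x}_N)$. Here $W^\star_{x,\gamma}$ is the equilibrium potential between $\breve{\ms E}^{x}_N \cup \breve{\ms E}^{\star, x}_N$ and $\ms E^{\star, x}_N$ for the enlarged process, so by the standard Dirichlet principle
\begin{equation*}
\Cap_{\star}(\ms E^{\star, x}_N, \breve{\ms E}^{x}_N \cup \breve{\ms E}^{\star, x}_N)
\;=\; \< (-L_\star) W^\star_{x,\gamma}, W^\star_{x,\gamma}\>_{\pi_\star}\;.
\end{equation*}
Now decompose the Dirichlet form of $W^\star_{x,\gamma}$ into bulk bonds (inside $\ms E_N \cup \ms E^\star_N$ of the trace type) and the $\gamma$-bonds connecting $\eta\in\ms E^x_N$ to its copy $\eta^\star\in\ms E^{\star,x}_N$. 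On the latter bonds the contribution is $\pi_\star(\eta)\,\gamma\,[W^\star_{x,\gamma}(\eta) - W^\star_{x,\gamma}(\eta^\star)]^2 = \pi_\star(\eta)\,\gamma\, W^\star_{x,\gamma}(\eta)^2$, since $W^\star_{x,\gamma}$ vanishes on $\ms E^{\star,x}_N$. Recalling $\pi_\star(\eta) = (1/2)\pi_{\ms E}(\eta)$ for $\eta\in\ms E_N$, this single family of bonds already contributes $(\gamma/2)\sum_{\eta\in\ms E^x_N}\pi_{\ms E}(\eta)\,W^\star_{x,\gamma}(\eta)^2$ to the Dirichlet form, and all other terms are non-negative; hence
\begin{equation*}
\frac{\gamma}{2}\sum_{\eta\in\ms E^x_N}\pi_{\ms E}(\eta)\,W^\star_{x,\gamma}(\eta)^2
\;\le\; \Cap_{\star}(\ms E^{\star, x}_N, \breve{\ms E}^{x}_N \cup \breve{\ms E}^{\star, x}_N)\;.
\end{equation*}
Finally, since the enlarged process can reach $\breve{\ms E}^{\star,x}_N$ only by first passing through $\breve{\ms E}^x_N$ (a $\star$-state is entered only from its own copy), the events defining $W^\star_{x,\gamma}$ are unchanged if we drop $\breve{\ms E}^{\star,x}_N$ from the target set, so $\Cap_{\star}(\ms E^{\star, x}_N, \breve{\ms E}^{x}_N \cup \breve{\ms E}^{\star, x}_N) = \Cap_{\star}(\ms E^{\star, x}_N, \breve{\ms E}^{x}_N)$ — this is precisely the identity already recorded in the two equivalent expressions for $W^\star_{x,\gamma}$ in the definition preceding Lemma~\ref{s11}. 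Combining the three displays yields the claimed bound.

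I expect the only subtle point to be the bookkeeping in the Dirichlet-form decomposition: one must be careful that the $\gamma$-bonds between $\ms E^x_N$ and $\ms E^{\star,x}_N$ are counted with the correct weight $\pi_\star(\eta)=(1/2)\pi_{\ms E}(\eta)$ and that one does not double-count them, and one must invoke the vanishing of $W^\star_{x,\gamma}$ on $\ms E^{\star,x}_N$ to turn the squared gradient into $W^\star_{x,\gamma}(\eta)^2$. Everything else — the two applications of Schwarz and the elementary observation that discarding non-negative Dirichlet-form terms only decreases the sum — is routine.
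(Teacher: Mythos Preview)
Your proof is correct and follows the same skeleton as the paper's: Lemma~\ref{s11}, then Schwarz with respect to $\pi_{\ms E}$, then a bound relating the resulting $\pi_{\ms E}$-sum to $(2/\gamma)\,\Cap_\star(\ms E^{\star,x}_N,\breve{\ms E}^x_N)$. The only difference is in that last step. You keep the squared potential $\sum_{\eta\in\ms E^x_N}\pi_{\ms E}(\eta)\,W^\star_{x,\gamma}(\eta)^2$ and bound it by isolating the $\gamma$-bond contribution in the Dirichlet form of $W^\star_{x,\gamma}$ (in the spirit of \eqref{24}), which costs you the auxiliary identity $\Cap_\star(\ms E^{\star,x}_N,\breve{\ms E}^x_N\cup\breve{\ms E}^{\star,x}_N)=\Cap_\star(\ms E^{\star,x}_N,\breve{\ms E}^x_N)$. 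The paper instead observes that $W^\star_{x,\gamma}(\eta)=\bb P^{\star,\gamma}_{\eta^\star}[H_{\breve{\ms E}^x_N}<H^+_{\ms E^{\star,x}_N}]$, uses $P^2\le P$ after Schwarz to drop the square, and then recognizes $\sum_{\eta}\pi_\star(\eta^\star)\,\lambda_\star(\eta^\star)\,\bb P^{\star,\gamma}_{\eta^\star}[H_{\breve{\ms E}^x_N}<H^+_{\ms E^{\star,x}_N}]$ as $\Cap_\star(\ms E^{\star,x}_N,\breve{\ms E}^x_N)$ directly from the escape-probability definition of capacity (recall $\lambda_\star(\eta^\star)=\gamma$ and $\pi_\star(\eta^\star)=\tfrac12\pi_{\ms E}(\eta)$). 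Both routes yield the same bound; the paper's is marginally shorter, yours is closer to the variational viewpoint used elsewhere in Section~\ref{sec1}.
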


\begin{proof}
Recall that we denote by $\eta^\star$ the copy of the state $\eta$. By
definition of the enlarged process and by Schwarz inequality,
\begin{equation*}
\begin{split}
& \bb P^{\star,\gamma}_{\nu_N}
\big[ H_{\breve{\ms E}^{x}_N} < H_{\ms E^{\star, x}_N}  \big] \;=\;
\sum_{\eta \in \ms E^{x}_N} \nu_N(\eta) \, \bb P^{\star,\gamma}_{\eta^\star}
\big[ H_{\breve{\ms E}^{x}_N} < H_{\ms E^{\star, x}_N}^+  \big] \\
&\qquad \;\le\;
\Big\{ E_{\pi_{\ms E}} \Big[ \Big( \frac{\nu_N}{\pi_{\ms E}} \Big)^2 \Big] 
\,  \sum_{\eta \in \ms E^{x}_N} \pi_{\ms E}(\eta)
\, \bb P^{\star,\gamma}_{\eta^\star}
\big[ H_{\breve{\ms E}^{x}_N} < H_{\ms E^{\star, x}_N}^+  \big] \,
\Big\}^{1/2} \;.
\end{split}
\end{equation*}
In the previous sum we may replace $\pi_{\ms E}(\eta)$ by $2\,
\pi_\star (\eta^\star)$. After the replacement, the sum becomes $2
\gamma^{-1} \Cap_{\star} (\ms E^{\star, x}_N , \breve{\ms
  E}^{x}_N)$. This estimate together with Lemma \ref{s11} concludes
the proof of the corollary.
\end{proof}

\smallskip\noindent{\bf Comments on assumption (L4U).} We present in
this subsection two strategies to prove that the equilibrium potential
$W^\star_{x, \gamma} (\eta)$ vanishes. We apply the first technique in
Example A of Section \ref{sec5}.

\smallskip\noindent\emph{A. Monotonicity.}
On the one hand, it is always possible to couple two trace processes
$\eta^{\ms E}(t)$ starting from different initial states in such a way
that both reach the set $\ms E^{\star}_N$ at the same time.  Assume
that the equilibrium potential $W^\star_{x, \gamma}$ satisfy some
property $\ms P$.  For example, if the state space $\ms E_N$ is
partially ordered and if the process $\eta^{\ms E}(t)$ is monotone,
the equilibrium potential might be monotone. By the Dirichlet
principle,
\begin{equation*}
\Cap_{\star} (\ms E^{\star, x}_N, \breve{\ms E}^{x}_N \cup \breve{\ms
  E}^{\star, x}_N) \;=\;
\< W^\star_{x, \gamma} , (-L_\star) W^\star_{x, \gamma}
\>_{\pi_\star} \;=\; \inf_f \, \< f, (-L_\star) f\>_{\pi_\star} \;,
\end{equation*}
where the supremum is carried over all functions $f$ vanishing at $\ms
E^{\star, x}_N$, equal to $1$ on $\breve{\ms E}^{x}_N \cup \breve{\ms
  E}^{\star, x}_N$ and satisfying condition $\ms P$. Fix a
configuration $\eta\in \ms E^{x}_N$ and denote by $R_N(\varepsilon)$,
$\varepsilon >0$, the right hand side of the previous formula when we
impose the further restriction that $f(\eta) \ge \varepsilon$.

To prove that $W^\star_{x, \gamma} (\eta)$ vanishes as
$N\uparrow\infty$, it is enough to show that for every $\varepsilon
>0$, $\Cap_{\star} (\ms E^{\star, x}_N, \breve{\ms E}^{x}_N \cup
\breve{\ms E}^{\star, x}_N) \ll R_N (\varepsilon)$. Indeed, suppose by
contradiction that $W^\star_{x, \gamma} (\eta)$ does not vanish as
$N\uparrow\infty$. There exists in this case $\varepsilon >0$ and a
subsequence $N_j$, still denoted by $N$, for which $W^\star_{x,
  \gamma} (\eta) \ge \varepsilon$ for all $N$. Therefore,
\begin{equation*}
R_N (\varepsilon) \;\le\; \< W^\star_{x, \gamma} , (-L_\star) W^\star_{x, \gamma}
\>_{\pi_\star} \;=\; \Cap_{\star} (\ms E^{\star, x}_N, 
\breve{\ms E}^{x}_N \cup \breve{\ms E}^{\star, x}_N)\;, 
\end{equation*}
proving our claim.

\smallskip\noindent\emph{B. Capacities.}
To present the second form of estimating the equilibrium potential, we
start with a general result which expresses the equilibrium potential
as a ratio between capacities. Consider a reversible Markov chain
$\eta (t)$ on some countable state space $E$. Denote by $\mb P_\xi$,
$\xi\in E$, the probability measure on the path space $D(\bb R_+, E)$
induced by the Markov process $\eta(t)$ starting from $\xi$, and by
$\Cap (A,B)$ the capacity between two disjoint subsets, $A$, $B$, of
$E$. Next result was communicated to us by A. Teixeira.

\begin{lemma}
\label{s20}
Let $A$, $B$ be two disjoint subsets of $E$, $A\cap B =
\varnothing$, and let $\eta\not\in A\cup B$. Then,
\begin{equation*}
\begin{split}
\mb P_{\eta}
\big[ H_B < H_A  \big] \; & = \;
\frac{\Cap (\eta, A\cup B) + \Cap (B, A\cup \{\eta\}) -
\Cap (A, B\cup \{\eta\})}
{2\, \Cap (\eta, A\cup B)} \\
&\le\; 
\frac{\Cap (\eta, B)} {\Cap (\eta, A\cup B)}
\;\cdot
\end{split}
\end{equation*}
\end{lemma}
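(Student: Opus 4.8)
The plan is to prove the equality by a polarization identity for the Dirichlet form, and to obtain the inequality from an elementary renewal argument at the point $\eta$; the chain is used only through reversibility and through irreducible recurrence.

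For the equality I would introduce the three equilibrium potentials: $f_1$, equal to $1$ at $\eta$, to $0$ on $A\cup B$, and harmonic (for $L$) off $\{\eta\}\cup A\cup B$; $f_2$, equal to $1$ on $B$, to $0$ on $A\cup\{\eta\}$, harmonic elsewhere; and $f_3$, equal to $1$ on $A$, to $0$ on $B\cup\{\eta\}$, harmonic elsewhere. Since the chain is irreducible and recurrent, a bounded function that is harmonic off a non-empty set is determined by its values on that set (optional stopping, the relevant hitting times being a.s.\ finite); applied to $\{\eta\}\cup A\cup B$ this gives $f_1+f_2+f_3\equiv 1$. The same uniqueness principle, applied to $V(\xi):=\mb P_\xi[H_B<H_A]$, which is harmonic off $A\cup B$ (in particular at $\eta$) with $V=0$ on $A$ and $V=1$ on $B$, yields the representation $V=f_2+V(\eta)\,f_1$: both sides are bounded, harmonic off $A\cup B\cup\{\eta\}$, and coincide on that set.

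I would then pair $V=f_2+V(\eta)\,f_1$ with $f_1$ in the Dirichlet form $\mc D(g,h):=\<g,(-L)h\>_\pi$. Because $(-L)V$ is supported on $A\cup B$, where $f_1$ vanishes, $\mc D(f_1,V)=0$, so $0=\mc D(f_1,f_2)+V(\eta)\,\mc D(f_1,f_1)$; together with $\mc D(f_1,f_1)=\Cap(\eta,A\cup B)$ this gives $V(\eta)=-\mc D(f_1,f_2)/\Cap(\eta,A\cup B)$. To evaluate the cross term, expand, using $f_3=1-f_1-f_2$, invariance of $\mc D$ under additive constants, and $\mc D(f_i)=\Cap$ of the corresponding pair of sets (this is where reversibility and the Dirichlet principle enter),
\[
\Cap(A,B\cup\{\eta\})=\mc D(f_1)+\mc D(f_2)+2\,\mc D(f_1,f_2)\;,
\]
so that $2\,\mc D(f_1,f_2)=\Cap(A,B\cup\{\eta\})-\Cap(\eta,A\cup B)-\Cap(B,A\cup\{\eta\})$. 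Substituting into the formula for $V(\eta)$ produces precisely the stated identity for $\mb P_\eta[H_B<H_A]$.

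For the upper bound I would argue probabilistically rather than manipulate capacities. Set $p=\mb P_\eta[H_B<H_A\wedge H^+_\eta]$ and $q=\mb P_\eta[H_A<H_B\wedge H^+_\eta]$; since $A$, $B$ are disjoint and the chain is recurrent, $\mb P_\eta[H_{A\cup B}<H^+_\eta]=p+q$ and $\mb P_\eta[H^+_\eta<H_{A\cup B}]=1-(p+q)$, and the strong Markov property at $H^+_\eta$ gives $V(\eta)=p+(1-p-q)\,V(\eta)$, hence $V(\eta)=p/(p+q)$. On the other hand $\Cap(\eta,A\cup B)=\pi(\eta)\lambda(\eta)(p+q)$, while $\Cap(\eta,B)=\pi(\eta)\lambda(\eta)\,\mb P_\eta[H_B<H^+_\eta]\ge\pi(\eta)\lambda(\eta)\,p$ because $\{H_B<H_A\wedge H^+_\eta\}\subset\{H_B<H^+_\eta\}$. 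Dividing yields $V(\eta)=p/(p+q)\le\Cap(\eta,B)/\Cap(\eta,A\cup B)$. I expect the only delicate point to be the rigorous justification of $f_1+f_2+f_3\equiv1$ and of $V=f_2+V(\eta)f_1$ on the infinite state space $E$ — that is, the uniqueness of bounded harmonic extensions — which is exactly why the recurrence hypothesis is invoked; everything else is routine bookkeeping with the Dirichlet form.
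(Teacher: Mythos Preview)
Your proof is correct. For the inequality, your renewal argument at $\eta$ coincides with the paper's: the paper phrases it through the trace process on $A\cup B\cup\{\eta\}$, writing $\mb P_\eta[H_B<H_A]=R_T(\eta,B)/(R_T(\eta,A)+R_T(\eta,B))$ and bounding the numerator by $\mu_T(\eta)^{-1}\Cap_T(\eta,B)$, which is exactly your $p/(p+q)\le \Cap(\eta,B)/\Cap(\eta,A\cup B)$ rewritten.

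For the equality the routes genuinely diverge. The paper passes to the trace on $A\cup B\cup\{\eta\}$, multiplies numerator and denominator by $\mu_T(\eta)$, and then invokes two external results from \cite{bl2}: Lemma~6.8, which expresses $\mu_T(\eta)R_T(\eta,B)$ as the alternating sum of trace-capacities, and Lemma~6.9, which identifies trace-capacities with capacities of the original chain. Your argument is self-contained: you work directly with the three equilibrium potentials $f_1,f_2,f_3$, use the orthogonality $\mc D(f_1,V)=0$ (since $(-L)V$ lives on $A\cup B$ where $f_1$ vanishes) to isolate $V(\eta)$, and then recover the alternating sum of capacities from the polarization $\mc D(f_3)=\mc D(f_1+f_2)=\mc D(f_1)+\mc D(f_2)+2\mc D(f_1,f_2)$. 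The underlying algebra is the same --- Lemma~6.8 of \cite{bl2} is itself proved by such a polarization --- but your presentation avoids the trace-process machinery and the external citations entirely, at the cost of the conceptual picture (everything reduced to a three-state chain) that the paper's route provides.
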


\begin{proof}
Denote by $\eta^{T}(t)$ the trace of the process $\eta(t)$ on the set
$A \cup B \cup \{\eta\}$, and by $\mb P^{T}_{\eta}$ the distribution
of this Markov process starting from $\eta$. Clearly,
\begin{equation*}
\mb P_{\eta} \big[ H_{B} < H_{A}  \big] \;=\;
\mb P^{T}_{\eta} \big[ H_{B} < H_{A}  \big]
\; = \; \frac{R_T(\eta, B)} {R_T(\eta, A) + R_T(\eta, B)} \;,
\end{equation*}
if $R_T(\zeta, \xi)$ represents the jump rates of the trace process
$\eta^{T}(t)$. Denote by $\mu_T(\,\cdot\,)$ the stationary measure of
the process $\eta^{T}(t)$.  Multiplying the numerator and the
denominator of the former ratio by $\mu_T(\eta)$, in view of
\cite[Lemma 6.8]{bl2}, the ratio becomes
\begin{equation*}
\begin{split}
& \frac{\Cap_{T}(\eta, A\cup B) + \Cap_{T}(B, A\cup \{\eta\}) -
\Cap_{T}(A, B\cup \{\eta\})}
{2\, \Cap_{T}(\eta, A\cup B)} \\
&\quad \;=\;
\frac{\Cap (\eta, A\cup B) + \Cap (B, A\cup \{\eta\}) -
\Cap (A, B\cup \{\eta\})}
{2\, \Cap (\eta, A\cup B)} \;,    
\end{split}
\end{equation*}
where we used \cite[Lemma 6.9]{bl2} in the last equality. This proves
the identity. To derive the inequality, denote by $\lambda_T(\cdot)$
the holding rates of the trace process $\eta^T(t)$ and observe that
$\Cap_T (\eta, B) = \mu_T(\eta)\, \lambda_T(\eta)\, \mb P^T_\eta [ H_B
< H^+_\eta] \ge \mu_T(\eta)\, R_T(\eta, B)$.
\end{proof}

In some cases the estimate presented in the previous lemma has no
content. On the one hand,
\begin{equation*}
\Cap_T (\eta, B) \;=\;  \mu_T(\eta)\, R_T(\eta, B) 
\;+\; \mu_T(\eta)\, \lambda_T(\eta)\, \mb P^T_\eta
\big[ H_A< H_B < H^+_\eta \big]\;.
\end{equation*}
The second term on the right hand side is the expression we added to
the numerator to transform the identity presented in Lemma \ref{s20}
into an inequality. On the other hand, since $H_A \wedge
H_B < H^+_\eta$ $\mb P^T_\eta$-a.s.,
\begin{equation*}
\mb P^T_\eta \big [ H_A< H_B < H^+_\eta \big] \;=\; 
\mb E^T_\eta \Big [ \mb 1\{ H_A< H_B\} \, \mb P^T_{\eta^T(H_A)} \big [
H_B < H_\eta \big]\; \Big]\;.
\end{equation*}
and
\begin{equation*}
\begin{split}
& \mu_T(\eta)\, R_T(\eta, B) \;+\; \mu_T(\eta)\, \lambda_T(\eta)\, 
\mb P^T_\eta \big[ H_A< H_B \big] \\
&\qquad \;=\; 
\mu_T(\eta)\, R_T(\eta, B) \;+\; \mu_T(\eta)\, R_T(\eta, A)
\;=\; \Cap_T (\eta, A\cup B)\;,
\end{split}
\end{equation*}
which is the expression which appears in the denominator in the proof
of the lemma.  Therefore, the statement of the lemma may have some
interest only if $\mb P^T_{\eta^T(H_A)} \big [ H_B < H_\eta \big] =
\mb P_{\eta^T(H_A)} \big [ H_B < H_\eta \big]$ is negligible, i.e., if
the process starting from $A$ reaches $B$ before $\eta$ with a
vanishing probability. \smallskip

We apply Lemma \ref{s20} to our context to obtain a bound on $\bb
P^{\ms E}_{\eta} [ H_{\breve{\ms E}^{x}_N} \le \gamma^{-1} ]$. For
$\gamma >0$, consider the Markov process $\{\eta^{N,\star}(t) : t\ge
0\}$ on $E_N \cup \ms E^\star_N$ whose jump rates $R_{N,\star} (\eta,
\xi) = R^{\gamma}_{N,\star} (\eta, \xi)$ are given by
\begin{equation*}
R_{N,\star}(\eta,\xi) \;=\; 
\begin{cases}
R_N(\eta,\xi) & \text{if $\eta$ and $\xi\in E_N$,} \\
\gamma & \text{if $\eta\in \ms E^{\star}_N \cup
\ms E_N $ and if [$\xi = \eta^\star$ 
or $\eta = \xi^\star$],} \\
0 & \text{otherwise.} 
\end{cases}
\end{equation*}
Note that the proces $\eta^{\star}(t)$ is the trace of the process
$\eta^{N,\star}(t)$ on $\ms E^{\star}_N \cup \ms E_N$. Denote by
$\Cap_{N,\star}$ the capacity associated to the process
$\eta^{N,\star}(t)$.  Next result provides a bound for condition ({\bf
  L4U}) in terms of capacities which can be estimated through the
Dirichlet and the Thomson principles.

\begin{corollary}
\label{s21}
For every $x\in S$, $\eta\in \ms E^{x}_N$ and $\gamma>0$,
\begin{equation*}
\bb P^{\ms E}_{\eta} \big [ H_{\breve{\ms E}^{x}_N} \le \gamma^{-1}
\big ] \;\le\;
\frac{e\, \Cap_{N}(\eta, \breve{\ms E}^{x}_N)}
{2\, \Cap_{N,\star}(\eta, \ms E^{\star, x}_N)} \;\cdot
\end{equation*}
\end{corollary}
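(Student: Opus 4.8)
The plan is to combine Lemma~\ref{s11}, the remark made just above the statement that $\eta^\star(t)$ is the trace of $\eta^{N,\star}(t)$ on $\ms E^\star_N\cup\ms E_N$, and Lemma~\ref{s20}. By the first inequality in Lemma~\ref{s11}, $\bb P^{\ms E}_\eta[H_{\breve{\ms E}^x_N}\le\gamma^{-1}]\le e\,W^\star_{x,\gamma}(\eta)$, so it is enough to bound $W^\star_{x,\gamma}(\eta)=\bb P^{\star,\gamma}_\eta[H_{\breve{\ms E}^x_N}<H_{\ms E^{\star,x}_N}]$. Since both $\breve{\ms E}^x_N$ and $\ms E^{\star,x}_N$ are subsets of $\ms E^\star_N\cup\ms E_N$, and $\eta^\star(t)$ is the trace of $\eta^{N,\star}(t)$ on that set, the two processes visit those sets in the same order, whence $W^\star_{x,\gamma}(\eta)=\bb P^{N,\star}_\eta[H_{\breve{\ms E}^x_N}<H_{\ms E^{\star,x}_N}]$.

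Next I would apply Lemma~\ref{s20} to the reversible chain $\eta^{N,\star}(t)$, with $A=\ms E^{\star,x}_N$, $B=\breve{\ms E}^x_N$ and the state $\eta\in\ms E^x_N$, which lies in neither $A$ nor $B$, to get
\begin{equation*}
W^\star_{x,\gamma}(\eta)\;=\;\bb P^{N,\star}_\eta\big[H_{\breve{\ms E}^x_N}<H_{\ms E^{\star,x}_N}\big]\;\le\;\frac{\Cap_{N,\star}(\eta,\breve{\ms E}^x_N)}{\Cap_{N,\star}(\eta,\ms E^{\star,x}_N\cup\breve{\ms E}^x_N)}\;.
\end{equation*}
It then remains to check that the right-hand side is bounded by $\Cap_N(\eta,\breve{\ms E}^x_N)/[2\,\Cap_{N,\star}(\eta,\ms E^{\star,x}_N)]$. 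For the numerator one has $\Cap_{N,\star}(\eta,\breve{\ms E}^x_N)\le\Cap_N(\eta,\breve{\ms E}^x_N)$: extend the equilibrium potential between $\{\eta\}$ and $\breve{\ms E}^x_N$ for $\eta^N(t)$ to a test function for $\eta^{N,\star}(t)$ by giving each copy $\zeta^\star$ the value of $\zeta$, so that the rate-$\gamma$ edges contribute nothing to the Dirichlet form; since the invariant measure of $\eta^{N,\star}(t)$ restricted to $E_N$ is a constant $\le 1$ times $\pi_N$, the Dirichlet principle yields the claim. For the denominator one needs $\Cap_{N,\star}(\eta,\ms E^{\star,x}_N\cup\breve{\ms E}^x_N)\ge 2\,\Cap_{N,\star}(\eta,\ms E^{\star,x}_N)$.

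The step I expect to be the main obstacle is precisely this last capacity inequality with the constant $2$, since plain monotonicity of the capacity in the target set yields only the constant $1$. To recover the factor $2$ I would not use the inequality form of Lemma~\ref{s20} but its identity form, which writes $W^\star_{x,\gamma}(\eta)$ as $[\,\Cap_{N,\star}(\eta,A\cup B)+\Cap_{N,\star}(B,A\cup\{\eta\})-\Cap_{N,\star}(A,B\cup\{\eta\})\,]/[\,2\,\Cap_{N,\star}(\eta,A\cup B)\,]$, together with the fact that each element $\zeta^\star$ of $A=\ms E^{\star,x}_N$ is a leaf attached to $\zeta\in\ms E^x_N$, so that $\Cap_{N,\star}(A,B\cup\{\eta\})\ge\Cap_{N,\star}(A,\{\eta\})=\Cap_{N,\star}(\eta,\ms E^{\star,x}_N)$ and, after bounding the numerator by $\Cap_N(\eta,\breve{\ms E}^x_N)$ as above, the $2$ in the denominator of the identity is matched to $2\,\Cap_{N,\star}(\eta,\ms E^{\star,x}_N)$. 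Multiplying the resulting bound by the factor $e$ coming from Lemma~\ref{s11} gives the corollary.
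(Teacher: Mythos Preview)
Your overall strategy matches the paper's: combine Lemma~\ref{s11} with Lemma~\ref{s20}. The gap is in how you extract the factor~$2$. Your attempt via the identity form of Lemma~\ref{s20} does not succeed: once you bound the denominator $2\,\Cap_{N,\star}(\eta,A\cup B)$ below by $2\,\Cap_{N,\star}(\eta,\ms E^{\star,x}_N)$, you still need the \emph{numerator} $\Cap_{N,\star}(\eta,A\cup B)+\Cap_{N,\star}(B,A\cup\{\eta\})-\Cap_{N,\star}(A,B\cup\{\eta\})$ to be at most $\Cap_N(\eta,\breve{\ms E}^x_N)$. But the only bound available on that numerator is $\le 2\,\Cap_{N,\star}(\eta,B)$ (this is exactly the content of the inequality in Lemma~\ref{s20}), and $2\,\Cap_{N,\star}(\eta,\breve{\ms E}^x_N)\le 2\,\Cap_N(\eta,\breve{\ms E}^x_N)$, not $\Cap_N(\eta,\breve{\ms E}^x_N)$. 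So your ``fix'' simply reproduces the inequality form and yields $e\,\Cap_N(\eta,\breve{\ms E}^x_N)/\Cap_{N,\star}(\eta,\ms E^{\star,x}_N)$, off by a factor of~$2$. The leaf observation about $\zeta^\star$ and the lower bound on $\Cap_{N,\star}(A,B\cup\{\eta\})$ are both correct but play no role in improving the numerator.

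The paper obtains the factor~$2$ by applying Lemma~\ref{s20} to the process $\eta^\star(t)$ on $\ms E_N\cup\ms E^\star_N$ (the $\gamma$-enlargement of the \emph{trace} process), rather than to $\eta^{N,\star}(t)$ on $E_N\cup\ms E^\star_N$. This gives
\[
W^\star_{x,\gamma}(\eta)\;\le\;\frac{\Cap_\star(\eta,\breve{\ms E}^x_N)}{\Cap_\star(\eta,\ms E^{\star,x}_N)}\;.
\]
The point is that both $\{\eta\}$ and $\breve{\ms E}^x_N$ lie in $\ms E_N$, and the reference measure of $\eta^\star(t)$ satisfies $\pi_\star|_{\ms E_N}=\tfrac12\,\pi_{\ms E}$; since the star states are free in the Dirichlet minimization between two subsets of $\ms E_N$, one gets $\Cap_\star(\eta,\breve{\ms E}^x_N)=\tfrac12\,\Cap_{\ms E}(\eta,\breve{\ms E}^x_N)$. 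This is where the $1/2$ comes from. One then passes from $\Cap_{\ms E}$ to $\Cap_N$ and from $\Cap_\star$ to $\Cap_{N,\star}$ via the trace--capacity relation \cite[Lemma~6.9]{bl2}, the two normalization factors cancelling in the ratio.
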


\begin{proof}
By Lemma \ref{s11} and by Lemma \ref{s20},
\begin{equation*}
\bb P^{\ms E}_{\eta} \big [ H_{\breve{\ms E}^{x}_N} \le \gamma^{-1}
\big ] \;\le\; e\, \bb P^{\star,\gamma}_{\eta}
\big[ H_{\breve{\ms E}^{x}_N} < H_{\ms E^{\star, x}_N}  \big]
\;\le\; \frac{e\, \Cap_{\star}(\eta, \breve{\ms E}^{x}_N)}
{\Cap_{\star}(\eta, \ms E^{\star, x}_N)} \;\cdot
\end{equation*}
It is clear from the Dirichlet principle and from the definition of
the enlarged process that $\Cap_{\star}(\eta, \breve{\ms E}^{x}_N) =
(1/2) \, \Cap_{\ms E}(\eta, \breve{\ms E}^{x}_N)$, where $\Cap_{\ms E}$
stands for the capacity associated to the trace process $\eta^{\ms
  E}(t)$.  By \cite[Lemma 6.9]{bl2}, once more, $\Cap_{\ms E}(\eta,
\breve{\ms E}^{x}_N) = \pi (\ms E_N)^{-1} \Cap_N (\eta, \breve{\ms
  E}^{x}_N)$ and $\Cap_{\star}(\eta, \ms E^{\star, x}_N) = \pi (\ms
E_N)^{-1} \Cap_{N, \star}(\eta, \ms E^{\star, x}_N)$.  This concludes
the proof of the lemma.
\end{proof}

\section{Tightness}
\label{sec2}

We prove in this section tightness of the process ${\mb X}^N_t$.  By
Aldous criterion (see Theorem 16.10 in \cite{b}) we just need to show
that for every $\epsilon>0$ and $T>0$
\begin{equation}
\label{62}
\lim_{\delta\downarrow 0}\lim_{N\to\infty}\sup_{a\le\delta}
\sup_{\tau\in\mf T_T}\bb P^{\ms E}_{\nu_N} \big[ \;
| {\mb X}^N_{\tau + a} - {\mb X}^N_{\tau}| > \epsilon \;\big] \;=\; 0\;,
\end{equation}
where $\mf T_T$ is the set of stopping times bounded by $T$.

In fact, in the present context of a finite state space, we do not
need to consider all stopping times, but just the jump times. More
precisely, the process ${\mb X}^N_t$ is tight provided 
\begin{equation*}
\lim_{\delta\to 0} \limsup_{N\to\infty} \sup_{i\ge 0} \bb P^{\ms
  E}_{\nu_N} \big[ \tau_{i+1} - \tau_i \le \delta \big]\;=\; 0\;,
\end{equation*}
where $\tau_0=0$ and $\tau_i$, $i\ge 1$, represent the jumping times
of the process ${\mb X}^N_t$.

\smallskip\noindent{\bf Proof of Lemma \ref{s15}.}  We will prove that
\eqref{62} holds. Fix $T>0$, $\epsilon >0$ and $\delta >0$.  By
the strong Markov property, for every $0<a\le \delta$ and stopping
time $\tau \le T$,
\begin{equation*}
\begin{split}
& \bb P^{\ms E}_{\nu_N} \big[ \; | {\mb X}^N_{\tau + a} - {\mb X}^N_{\tau}| > \epsilon
\;\big] \;\le\; \bb P^{\ms E}_{\nu_N} \big[ \; \bb P^{\ms E}_{\eta (\tau)}
\big[ \; | {\mb X}^N_{a} - {\mb X}^N_{0}| > \epsilon \;\big] \,\big] \\
& \qquad \le\; \sup_{\eta\in \ms E_N} \bb P^{\ms E}_{\eta} 
\big[ \; | {\mb X}^N_{a} - {\mb X}^N_{0}| > \epsilon \;\big] \;\le\;
\max_{x\in S} \sup_{\eta\in \ms E^x_N} \bb P^{\ms E}_{\eta} 
\big[ H_{\breve{\ms E}^{x}} \le \delta \theta_N \,\big]\;. 
\end{split}
\end{equation*}
To conclude the proof we need to show that the last term vanishes as
$N\uparrow\infty$ and then $\delta\downarrow 0$. The arguments used
are similar to the ones used in the proof of Theorem \ref{s00}.

Let $T_N$ be a sequence satisfying the assumptions ({\bf L4U}). Fix
$x\in S$ and $\eta\in \ms E^x_N$. The probability $\bb P^{\ms
  E}_{\eta} [ H_{\breve{\ms E}^{x}} \le \delta \theta_N ]$ is bounded
above by
\begin{equation}
\label{40}
\bb P^{\ms E}_{\eta} \big[ H_{\breve{\ms E}^{x}} \le T_N
\;\big] \;+\; \bb E^{\ms E}_{\eta} \Big[ \mb 
1 \big \{H_{\breve{\ms E}^{x}} > T_N \big\}
\, \bb P^{\ms E}_{\eta (T_N)} 
\big[ H_{\breve{\ms E}^{x}} \le \delta \theta_N \;\big] \,\Big] \;.
\end{equation}
The first term vanishes in view of assumption ({\bf L4U}).  On the set
$\{H_{\breve{\ms E}^{x}} > T_N\}$, we may couple the process $\eta
(t)$ with the reflected process $\eta^{\mb r, x} (t)$ in a way that
$\eta (t) = \eta^{\mb r, x} (t)$ for $0\le t \le T_N$. In particular,
we may replace in the previous term $\bb P^{\ms E}_{\eta
  (T_N)} [ H_{\breve{\ms E}^{x}} \le \delta \theta_N]$ by $\bb
P^{\ms E}_{\eta^{{\mb r, x}} (T_N)} [ H_{\breve{\ms E}^{x}}
\le \delta \theta_N]$.  After this replacement we may bound the second
term in \eqref{40} by
\begin{equation}
\label{41}
\sum_{\xi \in \ms E^x_N}  \, \big\{ \big( \delta_\eta
S^{\mb r, x}(T_N) \big) (\xi) -  \pi_x(\xi) \big\}\,
\bb P^{\ms E}_{\xi} 
\big[ H_{\breve{\ms E}^{x}} \le \delta \theta_N \big]
\;+\; \bb P^{\ms E}_{\pi_x} 
\big[ H_{\breve{\ms E}^{x}} \le \delta \theta_N \big] \;,
\end{equation}
where $S^{\mb r, x} (t)$ represents the semi-group of the reflected
process.  The first term of this sum is bounded by $\Vert \delta_\eta
S^{\mb r, x} (T_N) - \pi_x \Vert_{\rm TV}$, where $\Vert \mu - \nu
\Vert_{\rm TV}$ stands for the total variation distance between $\mu$
and $\nu$. By definition of the mixing time, this last expression is
less than or equal to $(1/2)^{(T_N/T^{\rm mix}_{\mb r, x})}$, which
vanishes as $N\uparrow\infty$ by definition of the sequence $T_N$.

It remains to estimate the second term in \eqref{41}. It can be
written as
\begin{equation}
\label{44}
\sum_{\eta\in \ms E^x_N} \big\{ 
\pi_x(\eta) - \pi^*_x(\eta) \big\}\,
\bb P^{\ms E}_{\eta} \big[ H_{\breve{\ms E}^{x}} \le \delta \theta_N
\big] \;+\; \bb P^{\ms E}_{\pi^*_x} \big[ H_{\breve{\ms E}^{x}} 
\le \delta \theta_N \big] \;,  
\end{equation}
where $\pi^*_x$ is the quasi-stationary measure associated to the
trace process $\eta^{\ms E}(t)$ killed when it hits $\breve{\ms
  E}^{x}$.  The first term is less than or equal to
\begin{equation*}
\sum_{\eta\in \ms E^x_N} \pi_x(\eta) \, \Big|
\frac{\pi^*_x(\eta)}{\pi_x(\eta)} - 1  \Big| \;\le\;
\Big\{ \sum_{\eta\in \ms E^x_N} \pi_x(\eta) \, \Big(
\frac{\pi^*_x(\eta)}{\pi_x(\eta)} - 1  \Big)^2 \Big\}^{1/2}\;.
\end{equation*}
By Proposition 2.1, (17) and Lemma 2.2 in \cite{bg11}, the expression
inside the square root on the right hand side is bounded by
$\varepsilon_x /[1-\varepsilon_x]$, where $\varepsilon_x =
E_{\pi_x}[R^{\ms E}(\eta, \breve{\ms E}^{x})]/\mf g_{\mb r, x}$.  By
\eqref{51}, $\varepsilon_{x} \le C_0 (\theta_N \mf g_{\mb r, x})^{-1}$
for some finite constant $C_0$ and by hypothesis, $\theta_N^{-1} \ll
\mf g_{\mb r, x}$ . This shows that the first term in \eqref{44}
vanishes as $N\uparrow\infty$.

Finally, since $\pi^*_x$ is the quasi-stationary state, under $\bb
P_{\pi^*_x}$, the hitting time of $\breve{\ms E}^{x}_N$,
$H_{\breve{\ms E}^{x}}$, has an exponential distribution whose
parameter we denote by $\phi^*_x$.  By \cite[Lemma 2.2]{bg11}, $\phi^*_x$
is bounded by $E_{\pi_x}[R^{\ms E}(\eta, \breve{\ms E}^{x})] \le
C_0/\theta_N$, for some finite constant $C_0$. Hence,
\begin{equation*}
\bb P^{\ms E}_{\pi^*_x} \big[ H_{\breve{\ms E}^{x}} \le \delta
\theta_N \big] \;=\; 1 - e^{-\phi^*_x \delta \theta_N} \;\le\;
1 - e^{-C_0 \delta}\;,
\end{equation*}
an expression which vanishes as $\delta\downarrow 0$. This proves
\eqref{62} and concludes the proof of the lemma. \qed
\smallskip

By a version of \cite[Theorem 12.3]{lpw1} for continuous-time
reversible Markov chains, $T^{\rm mix}_{\mb r, x} \le \mf g_{\mb r,
  x}^{-1} \log (4/\min_{\eta\in \ms E^x_N} \pi_x(\eta))$. Hence,
$\max_{x\in S} T^{\rm mix}_{\mb r, x} \ll T_N$ if 
\begin{equation}
\label{45}
\lim_{N\to\infty} \frac {1} {T_N\, \mf g_{\mb r, x}} \, 
\log \frac 1 {\min_{\eta\in \ms E^x_N} \pi_x(\eta)} \;=\; 0\;.
\end{equation}

\section{The spectral gap of the trace process}
\label{sec3}

We prove in this section Proposition \ref{s05}.  We start with an
elementary result which provides an upper bound for the spectral gap
of the trace process in terms of capacities. Recall that $\eta(t)$ is
a positive recurrent, reversible, continuous-time Markov chain on a
countable state space $E_N$, whose embedded discrete-time chain is
also positive recurrent. Let $\ms E_N$ a subset of $E_N$ and denote by
$\mf g_{\ms E}$ the spectral gap of the trace of $\eta (t)$ on $\ms
E_N$.

\begin{lemma}
\label{s22}
We have that
\begin{equation*}
\mf g_{\ms E} \;\le\; \inf_{\ms A\subset \ms E_N} 
\frac{\pi (\ms E_N)\, \Cap (\ms A, \ms B)}
{\pi (\ms A) \, \pi (\ms B)}\;,
\end{equation*}
where $\ms B = \ms E_N\setminus\ms A$.
\end{lemma}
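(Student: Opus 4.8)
The plan is to exhibit, for each nontrivial subset $\ms A\subsetneq \ms E_N$, a test function for the variational formula defining $\mf g_{\ms E}$ whose Rayleigh quotient equals the claimed bound. The natural candidate is the indicator-type function $f = \mb 1_{\ms A}$, suitably centered and normalized with respect to $\pi_{\ms E}$. First I would set $f(\eta) = \mb 1\{\eta\in\ms A\} - \pi_{\ms E}(\ms A)$ for $\eta\in\ms E_N$, so that $\<f,1\>_{\pi_{\ms E}} = 0$ and $f$ is admissible in the infimum.

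Next I would compute the two pieces of the Rayleigh quotient. For the denominator, $\<f,f\>_{\pi_{\ms E}} = \pi_{\ms E}(\ms A)\,\pi_{\ms E}(\ms B)$ where $\ms B = \ms E_N\setminus\ms A$, by the standard variance-of-an-indicator computation. For the numerator, $\< (-L_{\ms E}) f, f\>_{\pi_{\ms E}}$ is the Dirichlet form of $f$, and since the added constant $-\pi_{\ms E}(\ms A)$ does not affect differences $f(\xi)-f(\eta)$, this equals the Dirichlet form of $\mb 1_{\ms A}$, namely
\begin{equation*}
\frac 12 \sum_{\eta,\xi\in\ms E_N} \pi_{\ms E}(\eta)\, R^{\ms E}(\eta,\xi)\,\big(\mb 1_{\ms A}(\xi) - \mb 1_{\ms A}(\eta)\big)^2 \;=\; \sum_{\eta\in\ms A}\sum_{\xi\in\ms B} \pi_{\ms E}(\eta)\, R^{\ms E}(\eta,\xi)\;.
\end{equation*}
The key identity is then that this last sum is exactly $\Cap_{\ms E}(\ms A,\ms B)$, the capacity between $\ms A$ and $\ms B$ for the trace process, since $\mb 1_{\ms A}$ is the obvious feasible (though not optimal) flow/potential and the capacity is bounded above by the Dirichlet form of any function equal to $1$ on $\ms A$ and $0$ on $\ms B$; indeed by the Dirichlet principle $\Cap_{\ms E}(\ms A,\ms B)\le \<(-L_{\ms E})\mb 1_{\ms A},\mb 1_{\ms A}\>_{\pi_{\ms E}}$. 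Finally I would convert $\Cap_{\ms E}$ into $\Cap_N$ (the capacity for the original process): by the relation between the trace process and the original process, $\pi_{\ms E}(\eta) = \pi(\eta)/\pi(\ms E_N)$ and $\Cap_{\ms E}(\ms A,\ms B) = \pi(\ms E_N)^{-1}\,\Cap_N(\ms A,\ms B)$ (this is the content of \cite[Lemma 6.9]{bl2}, already invoked elsewhere in the paper). Likewise $\pi_{\ms E}(\ms A)\pi_{\ms E}(\ms B) = \pi(\ms A)\pi(\ms B)/\pi(\ms E_N)^2$. Assembling, the Rayleigh quotient for $f$ equals $\pi(\ms E_N)\,\Cap_N(\ms A,\ms B)/[\pi(\ms A)\pi(\ms B)]$, and taking the infimum over $\ms A$ gives the stated inequality.

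The only genuinely substantive point is the identification of the Dirichlet form of $\mb 1_{\ms A}$ with the capacity $\Cap_{\ms E}(\ms A,\ms B)$; but this is just the easy direction of the Dirichlet principle (the capacity is a minimum over potentials agreeing with the boundary conditions, and $\mb 1_{\ms A}$ is one such competitor), so there is no real obstacle — the bound need not be tight, only an upper bound, which is all the lemma claims. The remaining steps are the routine variance computation and the bookkeeping translating trace-process quantities into original-process quantities via the already-cited lemmas from \cite{bl2}. I would present the argument in this order: define $f$, compute numerator and denominator, invoke the Dirichlet-principle inequality and \cite[Lemma 6.9]{bl2}, then take the infimum.
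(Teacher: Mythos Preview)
Your approach is the same as the paper's --- test the variational formula for $\mf g_{\ms E}$ with the indicator of $\ms A$ --- but there is a genuine logical gap in your justification of the key step. You write that the Dirichlet form of $\mb 1_{\ms A}$ ``is exactly $\Cap_{\ms E}(\ms A,\ms B)$'', yet you then justify only the inequality $\Cap_{\ms E}(\ms A,\ms B)\le \<(-L_{\ms E})\mb 1_{\ms A},\mb 1_{\ms A}\>_{\pi_{\ms E}}$ via the easy direction of the Dirichlet principle, and conclude that this suffices because ``the bound need not be tight.'' This is backwards. You have
\[
\mf g_{\ms E}\;\le\;\frac{\<(-L_{\ms E})\mb 1_{\ms A},\mb 1_{\ms A}\>_{\pi_{\ms E}}}{\pi_{\ms E}(\ms A)\,\pi_{\ms E}(\ms B)}\;,
\]
and you want the numerator to be \emph{at most} $\Cap_{\ms E}(\ms A,\ms B)$ in order to reach the claimed bound. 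Knowing only that the capacity is a lower bound for the Dirichlet form is useless here.

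The correct observation, which the paper makes explicitly, is that $\mb 1_{\ms A}$ is not merely a feasible competitor but is \emph{exactly} the equilibrium potential between $\ms A$ and $\ms B$ for the trace process: since the whole state space of $\eta^{\ms E}$ is $\ms E_N=\ms A\cup\ms B$, there are no interior points on which harmonicity imposes a constraint, so the unique function equal to $1$ on $\ms A$ and $0$ on $\ms B$ is automatically the minimizer. Hence $\<(-L_{\ms E})\mb 1_{\ms A},\mb 1_{\ms A}\>_{\pi_{\ms E}}=\Cap_{\ms E}(\ms A,\ms B)$ with equality. (Equivalently, one can compute both sides directly from the definitions and see they coincide with $\sum_{\eta\in\ms A}\pi_{\ms E}(\eta)\,R^{\ms E}(\eta,\ms B)$.) Once you have this equality, the remainder of your argument --- the variance computation and the conversion to $\Cap_N$ via \cite[Lemma~6.9]{bl2} --- is correct and matches the paper.
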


\begin{proof}
Fix a subset $\ms A$ of $\ms E_N$, and let $\ms B = \ms E_N\setminus\ms A$.
By definition,
\begin{equation*}
\mf g_{\ms E} \;=\; \inf_f \frac{\< f, (-L_{\ms E}) f
  \>_{\pi_{\ms E}} }{\text{Var}_{\pi_{\ms E}} (f)}
\;\le\; \frac{\< \mb 1 \{\ms A\}, (-L_{\ms E}) \mb 1 \{\ms A\}
  \>_{\pi_{\ms E}} }{\text{Var}_{\pi_{\ms E}} (\mb 1 \{\ms A\})} \;,
\end{equation*} 
where $\text{Var}_{\pi_{\ms E}} (f)$ stands for the variance of $f$
with respect to the measure $\pi_{\ms E}$. Since $\ms E_N = \ms A \cup
\mc B$, $\mb 1\{\ms A\}$ is the equilibrium potential between $\ms A$
and $\ms B$ so that $\< \mb 1 \{\ms A\}, (-L_{\ms E}) \mb 1 \{\ms A\}
\>_{\pi_{\ms E}} = \Cap_{\ms E} (\ms A, \ms B)$. Hence, by \cite[Lemma
6.9]{bl2},
\begin{equation*}
\mf g_{\ms E} \;\le\; \frac{\Cap_{\ms E} (\ms A, \ms B)}{\pi_{\ms E}
(\ms A) \, \pi_{\ms E} (\ms B)} \;=\;
\frac{\pi (\ms E_N)\, \Cap (\ms A, \ms B)}
{\pi (\ms A) \, \pi (\ms B)}\;\cdot
\end{equation*}
\end{proof}

\noindent{\bf Proof of Proposition \ref{s05}}.
Let $F:\ms E_N\to\bb R$ be a function in $L^2(\pi_{\ms E})$ and denote
by $\hat F : E_N\to\bb R$ the harmonic extension of $F$ to $E_N$,
defined by
\begin{equation*}
\hat F (\eta) \;=\; 
\begin{cases}
F(\eta) & \text{if $\eta\in \ms E_N$,}\\
\bb E_{\eta}[F(\eta(H_{\ms E_N}))] & \text{if $\eta\not\in \ms E_N$.}
\end{cases}
\end{equation*}
We claim that
\begin{equation}
\label{33}
\< (-L_N) \hat F \,,\, \hat F\>_\pi \;=\; \pi(\ms E_N)\,
\< (-L_{\ms E}) F \,,\, F\>_{\pi_{\ms E}} \;.
\end{equation}
Indeed, since $ L_N \hat F =0$ on $\ms E^c_N$ and since $\hat F$ and
$F$ coincide on $\ms E_N$, the Dirichlet form $\< L_N \hat F \,,\,
\hat F\>_\pi$ is equal to
\begin{equation}
\label{34}
\sum_{\eta \in\ms E_N} \pi(\eta)\, F(\eta)\, \sum_{\xi\in E_N}
R_N(\eta,\xi) \{ \hat F(\xi) - F(\eta) \}\;.
\end{equation}
We decompose the previous sum in two expressions, the first one
including all terms for which $\xi$ belongs to $\ms E_N$ and the
second one including all terms for which $\xi$ belongs to
$E_N\setminus \ms E_N$. When $\xi$ belongs to $\ms E_N$, we may
replace $\hat F$ by $F$. The other expression, by definition of $\hat
F$ is equal to
\begin{equation*}
\sum_{\eta \in\ms E_N} \sum_{\xi\not \in \ms E_N} \pi(\eta)\, F(\eta)\, 
R_N(\eta,\xi) \sum_{\zeta\in \ms E_N} \bb P_\xi \big[ H_{\ms E_N} =
H_\zeta \big]  \{ F(\zeta) - F(\eta) \}\;.
\end{equation*}
Since for $\eta\in\ms E_N$,
\begin{equation*}
\bb P_\eta \big[ H^+_{\ms E_N} = H_\zeta \big] \;=\; p_N(\eta,\zeta)
\;+\; \sum_{\xi\not \in \ms E_N} p_N(\eta,\xi)\,
\bb P_\xi \big[ H_{\ms E_N} = H_\zeta \big] \;,
\end{equation*}
and since by \cite[Proposition 6.1]{bl2} $R^{\ms E}(\eta,\zeta) =
\lambda_N(\eta) \bb P_\eta \big[ H^+_{\ms E_N} = H_\zeta \big]$ the
previous sum is equal to
\begin{equation*}
\sum_{\eta \in\ms E_N}  \sum_{\zeta\in \ms E_N} \pi(\eta)\, F(\eta)\, 
\big\{R^{\ms E}(\eta,\zeta)  - R_N(\eta,\zeta) \big\} \,
\{ F(\zeta) - F(\eta) \}\;. 
\end{equation*}
Adding this sum to the first expression in our decomposition of
\eqref{34} as the sum of two terms, we get that the left hand side of
\eqref{33} is equal to
\begin{equation*}
\sum_{\eta \in\ms E_N}  \sum_{\zeta\in \ms E_N} \pi(\eta)\, F(\eta)\, 
R^{\ms E}(\eta,\zeta) \, \{ F(\zeta) - F(\eta) \} \;.   
\end{equation*}
To conclude the proof of Claim \eqref{33}, it remains to recall that
$\pi_{\ms E}(\eta) = \pi(\eta)/\pi(\ms E_N)$.

Fix a function $F:\ms E_N\to\bb R$. We claim that
\begin{equation}
\label{35}
\inf_g \< (-L_N) g \,,\, g\>_\pi\;=\;
\< (-L_N) \hat F \,,\, \hat F\>_\pi \;,
\end{equation}
where the infimum is carried over all functions $g:E_N \to \bb R$
which are equal to $F$ on $\ms E_N$. Indeed, it is simple to show that
any function $f$ which solves the variational problem on the left hand
side of \eqref{35} is harmonic on $\ms E^c_N$ and coincides with $F$
on $\ms E_N$, $L_N f = 0$ on $\ms E_N^c$ and $f=F$ on $\ms E_N$. The
unique solution to this problem is $\hat F$, which proves \eqref{35}.

Fix an eigenfunction $F$ associated to $\mf g_{\ms E}$ such that
$E_{\pi_{\ms E}}[F^2] = 1$, $E_{\pi_{\ms E}}[F] = 0$. By
\eqref{33} we have that
\begin{equation*}
\mf g_{\ms E} \;=\; \< (-L_{\ms E}) F \,,\, F\>_{\pi_{\ms E}}
\;=\; \frac 1{\pi(\ms E_N)} \< (-L_N) \hat F \,,\, \hat F\>_\pi\;.
\end{equation*}
By the spectral gap, the Dirichlet form on the right hand side is
bounded below by $\mf g$ times the variance of $\hat F$. This latter
variance, in view of the definition of $\hat F$ and the properties of
$F$, is equal to
\begin{equation*}
\pi(\ms E_N) \;+\; \sum_{\eta\not \in \ms E_N} \pi(\eta) \hat F(\eta)^2 
- \Big( \sum_{\eta\not \in \ms E_N} \pi(\eta) \hat F(\eta) \Big) ^2
\;\ge\; \pi(\ms E_N)\;.
\end{equation*}
This proves that $\mf g \le \mf g_{\ms E}$.

Fix an eigenfunction $f$ associated to $\mf g$ such that $E_{\pi}[f^2]
= 1$, $E_{\pi}[f] = 0$. Let $F: \ms E_N \to \bb R$ be the restriction
to $\ms E_N$ of $f$: $F(\eta) = f(\eta) \mb 1\{\eta\in \ms E_N\}$. By
definition of $\mf g$, 
\begin{equation*}
\mf g \;=\; \< (-L_N) f \,,\, f\>_{\pi}
\;\ge\; \inf_g \< (-L_N) g \,,\, g\>_{\pi}\;,
\end{equation*}
where the infimum is carried over all functions $g$ which coincide
with $F$ on $\ms E_N$. By \eqref{35}, by \eqref{33} and by definition
of the spectral gap $\mf g_{\ms E}$, the right hand
side of the previous term is equal to 
\begin{equation*}
\< (-L_N) \hat F \,,\, \hat F\>_{\pi} \;=\; \pi(\ms E_N)\,
\< (-L_{\ms E}) F \,,\, F\>_{\pi_{\ms E}} \;\ge\;
\mf g_{\ms E} \, \pi(\ms E_N)\,\big\{ E_{\pi_{\ms E}} [F^2] - 
E_{\pi_{\ms E}} [F]^2 \big\}\;.
\end{equation*}
Since $F=f\mb 1\{\ms E_N\}$, up to this point we proved that
\begin{equation*}
\mf g_{\ms E} \, \pi(\ms E_N)\,\big\{ E_{\pi_{\ms E}} [f^2 1\{\ms E_N\}] - 
E_{\pi_{\ms E}} [f 1\{\ms E_N\}]^2 \big\} \;\le\; \mf g
\end{equation*}
Since the eigenfunction $f$ associated to $\mf g$ is such that
$E_{\pi}[f^2] = 1$, $E_{\pi}[f] = 0$, we may rewrite the previous
inequality as
\begin{equation*}
\mf g_{\ms E} \, \Big\{ 1 - \Big[
E_{\pi} \big[f^2 \mb 1\{\ms E^c_N\}\big] \;+\;
\frac 1{\pi(\ms E_N)} E_{\pi} \big[f \mb 1\{\ms E^c_N\} \big]^2 \Big] \Big\} 
\;\le\; \mf g \;. 
\end{equation*} 
By Schwarz inequality, $E_{\pi} [f \mb 1\{\ms E^c_N\} ]^2 \le E_{\pi}
[f^2 \mb 1\{\ms E^c_N\} ] \pi (\ms E^c_N) $ so that
\begin{equation*}
\mf g_{\ms E} \, \Big\{ 1 - \frac 1{\pi(\ms E_N)}
E_{\pi} \big[f^2 \mb 1\{\ms E^c_N\}\big] \Big\} 
\;\le\; \mf g \;,
\end{equation*}
which proves the proposition. \qed

\section{Proof of Theorem \ref{s07}}
\label{sec4}

We assume in this section that the state space $E_N$ has been divided
in three disjoint sets $\ms E^1_N = \ms A$, $\ms E^2_N = \ms B$ and
$\Delta_N = E_N \setminus \ms E_N$, where $\ms E_N = \ms A \cup \ms
B$.  Recall that $\eta^{\ms E}(t)$ represents the trace of the process
$\eta(t)$ on the set $\ms E_N$ and $\eta^{\star}(t)$ the
$\gamma$-enlargement of the process $\eta^{\ms E}(t)$ to the set $\ms
E_N \cup \ms E^\star_N$, where $\gamma = \gamma_N$ is a sequence of
positive numbers and $\ms E^\star_N = \ms A^\star \cup \ms B^\star$,
$\ms A^\star$, $\ms B^\star$ being copies of the sets $\ms A$, $\ms
B$, respectively. Denote by $\mf g_{\ms A}$, $\mf g_{\ms B}$ the
spectral gap of the process $\eta(t)$ reflected at $\ms A$, $\ms B$,
respectively. 

Let $\widehat{\Cap}_\star (\ms A^\star, \ms B^\star)$ be the
normalized capacity between $\ms A^\star$ and $\ms B^\star$:
\begin{equation*}
\widehat{\Cap}_\star (\ms A^\star, \ms B^\star) \;=\; 
\frac{\Cap_\star (\ms A^\star, \ms B^\star)}
{\pi_{\ms E}(\ms A)\, \pi_{\ms E}(\ms B)}\;\cdot
\end{equation*}
By \cite[Theorem 2.12]{bg11},
\begin{equation} 
\label{36}
\Big( 1 \;-\; \frac{2 \, \widehat{\Cap}_\star (\ms A^\star, \ms B^\star)}
{\gamma} \Big)^2 \;\le\;
\frac{2 \, \widehat{\Cap}_\star (\ms A^\star, \ms B^\star)}{\mf g_{\ms
    E}} \;\le\; 1 \;+\; \frac{\gamma + 2 \, \widehat{\Cap}_\star (\ms
  A^\star, \ms B^\star)}{\min\{\mf g_{\ms A} , \mf g_{\ms B} \}}\;\cdot
\end{equation}
The factor $2$, which is not present in \cite{bg11}, appears because
we consider the capacity with respect to the probability measure
$\pi_\star$, while \cite{bg11} defines the capacity with $\pi_{\ms E}$
as reference measure.

Theorem \ref{s07} is a simple consequence of \eqref{36}. For sake of
completeness, we present a proof of the lower bound of \eqref{36}. Let
$V$ be the equilibrium potential between $\ms A^\star$ and $\ms
B^\star$: $V(\eta) = \bb P^{\star, \gamma}_\eta[H_{\ms A^\star} <
H_{\ms B^\star}]$. We sometimes consider below $V$ as a function on
$\ms E_N$. By definition of the spectral gap,
\begin{equation*}
\mf g_{\ms E} \;\le\; \frac{\< V, (-L_{\ms E}) V
  \>_{\pi_{\ms E}}}{\text{Var}_{\pi_{\ms E}} (V)}\;,
\end{equation*}
where $\text{Var}_{\pi_{\ms E}} (V)$ stands for the variance of $V$
with respect to the measure $\pi_{\ms E}$. We estimate the numerator
and the denominator separately.

Since the capacity between $\ms A^\star$ and $\ms
B^\star$ is equal to the Dirichlet form of the equilibrium potential,
\begin{equation*}
(1/2) \< V, (-L_{\ms E}) V \>_{\pi_{\ms E}} \;\le\; 
\Cap_\star (\ms A^\star, \ms B^\star) \; .
\end{equation*}
A martingale decomposition of the variance of $V$ gives that
\begin{equation*}
\text{Var}_{\pi_{\ms E}} (V) \;\ge \; \pi_{\ms E}(\ms A)\, 
\pi_{\ms E}(\ms B)\, 
\Big( E_{\pi_{\ms A}} [ V_{\ms A}] - E_{\pi_{\ms B}} [ V_{\ms B}]
\Big)^2\;,
\end{equation*}
where $V_{\ms A} = V \mb 1\{\ms A\}$, $V_{\ms B} = V \mb 1\{\ms B\}$.
Since $\Cap_\star (\ms A^\star, \ms B^\star) = \< V,
(-L_\star)V\>_{\pi_\star}$, since $(L_\star V)(\eta^\star) = \gamma
[V(\eta)-1]$, where $\eta^\star$ is the state in $\ms E^\star_N$
corresponding to the state $\eta\in\ms E_N$, and since $\pi_\star
(\eta^\star) = (1/2) \pi_{\ms E}(\eta)$, $2 \Cap_\star (\ms A^\star,
\ms B^\star) = \gamma \pi_{\ms E}(\ms A) - \gamma \sum_{\eta\in \ms A}
\pi_{\ms E}(\eta) V(\eta)$. Therefore,
\begin{equation*}
E_{\pi_{\ms A}} [ V_{\ms A}] \;=\; 1 \;-\; 
\frac{2\, \Cap_\star (\ms A^\star, \ms B^\star)}
{\gamma \, \pi_{\ms E}(\ms A)}\;\cdot
\end{equation*}
Repeating the previous argument with $1-V$ in place of $V$ we obtain
that
\begin{equation*}
E_{\pi_{\ms B}} [ V_{\ms B}] \;=\; 
\frac{2\, \Cap_\star (\ms A^\star, \ms B^\star)}
{\gamma \, \pi_{\ms E}(\ms B)}\;\cdot
\end{equation*}
Putting together the previous estimates, we conclude the proof of the
lower bound of \eqref{36}. \qed

\section{Applications}
\label{sec5}

We present in this section two applications of Theorems \ref{s02} and
\ref{s00}. Both processes do not visit points in the time scale where
tunneling occurs and, therefore, do not satisfy the assumptions of the
theory developed in \cite{bl2,bl7}. Furthermore, these models have
logarithmic energy or entropy barriers, restraining the application of
large deviations methods. On the other hand, both dynamics are
monotone with respect to a partial order, allowing the use of coupling
techniques.  The first model, which has only entropy barriers, was
suggested by A. Gaudilli\`ere to the authors as a model for testing
metastability techniques. We prove for to this model the mixing
conditions introduced in Section \ref{sec0}.E. The second one has been
examined in details in \cite{cmt, clmst}. We apply to this model the
$L^2$-theory presented in Section \ref{sec0}.D.

\subsection{The dog graph \cite{sc1}}
For $N\ge 1$ and $d\ge 2$, let $Q_N = \{0, \dots, N\}^d$ be a
$d$-dimensional cube of length $N$, let $\breve{Q}_N$ be the
reflection of $Q_N$ through the origin, $\breve{Q}_N = \{\eta\in \bb
Z^d : -\eta\in Q_N\}$, and let $V_N = Q_N \cup \breve{Q}_N$. Denote by
$E_N$ the set of edges formed by pairs of nearest-neighbor sites of
$V_N$, $E_N = \{ (\eta,\xi)\in V_N\times V_N: |\eta-\xi|=1\}$.  The
graph $G_N=(V_N,E_N)$ is called the dog graph \cite{sc1}.

Let $\{\eta(t) : t\ge 0\}$ be the continuous-time Markov chain on
$G_N$ which jumps from $\eta$ to $\xi$ at rate $1$ if $(\eta,\xi)\in
E_N$. The uniform measure on $V_N$, denoted by $\pi$, is the unique
stationary state.  Diaconis and Saloff Coste \cite[Example 3.2.5]{sc1}
proved that there exist constants $0<c(d) < C(d)<\infty$ such that for
all $N\ge 1$,
\begin{equation}
\label{63}
\frac{c(2)}{N^2 \,\log N} \;\le\; \mf g  \;\le\; \frac{C(2)}{N^2
  \,\log N} \quad\text{in $d=2$ and}\quad
\frac{c(d)}{N^d} \;\le\; \mf g  \;\le\; \frac{C(d)}{N^d}
\end{equation}
in dimension $d\ge 3$.

Fix a sequence $\alpha_N$, $(\log N)^{-1/2} \ll \alpha_N \ll 1$, and
let $\ms B_N = \{\eta =(\eta_1, \dots, \eta_d) \in V_N : \min_j \eta_j
\ge \alpha_N\, N\}$, $\ms A_N = - \ms B_N = \{\eta\in V_N : -\eta\in
\ms B_N\}$. Denote by $\mf g_{\ms A}$ and $T^{\rm mix}_{\mb r, \ms
  A}$ (resp. $\mf g_{\ms B}$ and $T^{\rm mix}_{\mb r, \ms B}$) the
spectral gap and the mixing time of the continuous-time random walk
$\eta(t)$ reflected at $\ms A_N$ (resp. $\ms B_N$).  It is well known
that there exist finite constants $0<c(d) < C(d)<\infty$ such that for
all $N\ge 1$,
\begin{equation}
\label{70}
\frac{c(d)}{N^2} \;\le\; \mf g_{\ms A}  \;\le\; \frac{C(d)}{N^2} \;,
\quad
c(d)\, N^2 \;\le\; T^{\rm mix}_{\mb r, \ms A} \;\le\; C(d) N^2\;,
\end{equation}
with similar inequalities if $\ms B$ replaces $\ms A$.

\smallskip\noindent\emph{Condition \eqref{43}}. Let $\ms E_N = \ms A_N
\cup \ms B_N$, and recall the notation introduced in Section
\ref{sec0}.  We claim that condition \eqref{43} is fulfilled for
$\theta_N = N^2 \log N$ in dimension $2$ and for $\theta_N = N^d$ in
dimension $d\ge 3$. Indeed, if $\pi_{\ms A}$, $\pi_{\ms B}$, $\pi_{\ms
  E}$ represent the uniform measure $\pi$ conditioned to $\ms A_N$,
$\ms B_N$, $\ms E_N$, respectively, by \eqref{61},
\begin{equation*}
E_{\pi_{\ms A}}[R^{\ms E}(\eta, \ms B)]  \;=\; \frac 1{\pi(\ms A_N)}\, 
\Cap_N (\ms A_N, \ms B_N)\;.
\end{equation*}
By the Dirichlet principle, the capacity is bounded by the Dirichlet
form of any function which vanishes on $\ms A_N$ and is equal to $1$
on $\ms B_N$. In dimension $d\ge 3$ we simply choose the indicator of
the set $Q_N$. In dimension $2$, let $D_k = \{\eta \in Q_N :
\eta_1+\eta_2 = k\}$, $k\ge 0$. Fix $1\le L\le N$ and consider the function
$f_L: Q_N \to \bb R_+$ defined by $f(0) =0$,
\begin{equation}
\label{66}
f_L(\eta) \;=\; \frac 1{\Phi(L)} \sum_{j=1}^k \frac 1j \quad \eta\in
D_k \;, \quad 1\le k\le L\;,
\end{equation}
where $\Phi(L)= \sum_{1\le j\le L} j^{-1}$, and $f_L(\eta) = 1$
otherwise. It is easy to see that the Dirichlet form of $f_L$ is
bounded by $C_0 (N^2 \log L)^{-1}$ for some finite constant $C_0$.
Choosing $L=N^{1/2}$, we conclude that there exists a finite
constant $C_0$ such that
\begin{equation}
\label{65}
\Cap_N (\ms A_N, \ms B_N) \;\le\; \frac{C_0}{N^2 \, \log N} \;, \quad
d=2\;, \quad  \Cap_N (\ms A_N, \ms B_N) \;\le\; 
\frac{C_0}{N^d} \;, \quad d\ge 3\;.
\end{equation}
Condition \eqref{43} follows from this estimate and the definition of
the sequence $\theta_N$.

\smallskip\noindent\emph{Condition} ({\bf L1B}) in Lemma \ref{s26}.
By Lemma \ref{s22} and by the previous estimate of the capacity, there
exists a finite constant $C_0$ such that $\mf g_{\ms E} \le C_0 [N^2
\log N]^{-1}$ in dimension $2$ and $\mf g_{\ms E} \le C_0 N^{-d}$ in
dimension $d\ge 3$. Condition ({\bf L1B}) is thus fulfilled in view of
\eqref{70}.

\smallskip\noindent\emph{Condition} ({\bf L4}) in Theorem \ref{s00}.
We claim that there exists a sequence $T_N$ satisfying the conditions
({\bf L4}) if $\nu_N$ is a sequence of measures concentrated on $\ms
A_N$ and such that
\begin{equation}
\label{64}
\lim_{N\to\infty} \frac 1{R_N} \,
E_{\pi_{\ms E}} \Big[ \Big( \frac{\nu_N}{\pi_{\ms E}} \Big)^2 \Big]
\;=\; 0\;, 
\end{equation}
where $R_N = \log N$ in dimension $d=2$, and $R_N = N^{d-2}$ in
dimension $d\ge 3$. Let $M_N$ be an increasing sequence, $M_N \gg 1$,
for which \eqref{64} still holds if multiplied by $M_N$.  Since
$\Cap_{\star} (\ms A^{\star}_N , \ms B_N) \le \Cap_{\ms E} (\ms A_N ,
\ms B_N)$, by Corollary \ref{s19}, by \cite[Lemma 6.9]{bl2}, by
\eqref{65} and by \eqref{64} multiplied by $M_N$,
\begin{equation*}
\lim_{N\to\infty} \bb P^{\ms E}_{\nu_N} \big[ H_{\ms B_N} \le N^2 \,
M_N \big] \;=\; 0\;.
\end{equation*}

The strategy proposed in Section \ref{sec3} permits to weaken
assumption \eqref{64}.

\begin{lemma}
\label{s23}
Let $T_N$ be a sequence such that $T_N \ll \alpha_N^2 \, N^2 \,\log N$
in dimension $2$, and $T_N \ll \alpha_N^d \, N^d$ in dimension $d\ge
3$. Then,
\begin{equation*}
\lim_{N\to\infty} \, \max_{\eta \in \ms A_N}\, \bb P_{\eta} 
\big[ H_{Q_N} \le T_N \big] \;=\; 0\;.
\end{equation*}
\end{lemma}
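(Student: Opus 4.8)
The plan is to exploit the fact that the dog graph has a unique cut vertex, the origin $\mathbf 0=(0,\dots,0)$, which is the only point of $Q_N\cap\breve Q_N$. Every configuration in $\ms A_N$ has all coordinates $\le-\alpha_N N<0$, so $\ms A_N\subset\breve Q_N\setminus\{\mathbf 0\}$, and any path from $\ms A_N$ to $Q_N$ must traverse $\mathbf 0$; since $\mathbf 0\in Q_N$ this gives $H_{Q_N}=H_{\{\mathbf 0\}}$ under $\bb P_\eta$ for every $\eta\in\ms A_N$. Before reaching $\mathbf 0$ the chain stays in $\breve Q_N\setminus\{\mathbf 0\}$, where its jump rates agree with those of the simple random walk on $\breve Q_N\cong\{-N,\dots,0\}^d$ reflected at the whole boundary of the box; in particular the $d$ coordinates then evolve as \emph{independent} continuous-time nearest-neighbour walks $Z^1(t),\dots,Z^d(t)$ on $\{-N,\dots,0\}$, each reflected at both endpoints. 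Thus it suffices to bound, uniformly in $\eta\in\ms A_N$, the probability that the product walk $Z(t)=(Z^1(t),\dots,Z^d(t))$ visits the corner $\mathbf 0$ before time $T_N$.

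To turn a hitting probability into an occupation-time estimate I would use the elementary inequality, valid for every $t>0$ by the strong Markov property at $H_{\{\mathbf 0\}}$,
\begin{equation*}
\bb P_\eta\big[H_{\{\mathbf 0\}}\le t\big]\;\le\;
\frac{\bb E_\eta\big[\int_0^{2t}\mb 1\{Z(s)=\mathbf 0\}\,ds\big]}
{\bb E_{\mathbf 0}\big[\int_0^{t}\mb 1\{Z(s)=\mathbf 0\}\,ds\big]}\;.
\end{equation*}
By independence of the coordinates both expectations factorise as $\int_0^u\prod_{j=1}^d p_s(\zeta_j,0)\,ds$, where $p_s(a,0)=\bb P_a[Z^1(s)=0]$ is the heat kernel of the one-dimensional reflected walk. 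One then inserts the standard estimates for this kernel: the lower bound $p_s(0,0)\ge c\,s^{-1/2}$ for $1\le s\le N^2$ and $p_s(0,0)\ge c/N$ for $s\ge N^2$ (local central limit theorem at a reflecting endpoint), and, for the starting coordinate $a\le-\alpha_N N$, the Gaussian upper bound $p_s(a,0)\le C\,(s^{-1/2}\wedge N^{-1})\exp(-c(\alpha_N N)^2/s)$, whose exponential factor is irrelevant once $s\gtrsim N^2$. For the easy range of small $t$ in which these integrals carry no information one argues directly: $H_{\{\mathbf 0\}}\le t$ forces $|Z^1(s)-\eta_1|\ge\alpha_N N$ for some $s\le t$, whose probability decays like $\exp(-c(\alpha_N N)^2/t)$ for $t\le(\alpha_N N)^2$ and hence tends to $0$ whenever $t=o((\alpha_N N)^2)$ (in particular when $t$ stays bounded, since $\alpha_N N\to\infty$).

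Carrying out the $s$-integrals, in dimension $d\ge3$ the denominator is bounded below by a positive constant (indeed $\int_1^2 p_s(0,0)^d\,ds\ge c_d>0$) and the numerator is $\le C_d\big((\alpha_N N)^{2-d}+t\,N^{-d}\big)$, so
\begin{equation*}
\max_{\eta\in\ms A_N}\bb P_\eta\big[H_{\{\mathbf 0\}}\le t\big]\;\lesssim\;(\alpha_N N)^{2-d}+t\,N^{-d}\;,
\end{equation*}
which vanishes for $t=T_N\ll\alpha_N^d N^d$ because $\alpha_N N\to\infty$ and $\alpha_N\to0$. In dimension $d=2$ the same scheme applies but every $s$-integral is logarithmic: the numerator is $\lesssim 1+\big[\log(T_N/(\alpha_N N)^2)\big]_+ + T_N N^{-2}$ and the denominator $\gtrsim \log(T_N\wedge N^2)$. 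Assuming $T_N\to\infty$ (else the direct estimate applies), the hypotheses $T_N\ll\alpha_N^2 N^2\log N$ and $\alpha_N\gg(\log N)^{-1/2}$ force $\big[\log(T_N/(\alpha_N N)^2)\big]_+\le\log\log N$ and $T_N N^{-2}\le\alpha_N^2\log N$, hence the numerator is $o(\log N)$, while whenever the $\log$-term is present one has $T_N\ge(\alpha_N N)^2=\alpha_N^2 N^2\ge N^2/\log N$, so the denominator is of order $\log N$; therefore the ratio tends to $0$ here as well.

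The only delicate point is the two-dimensional case: unlike in $d\ge3$, the expected time $Z$ spends at $\mathbf 0$ does \emph{not} tend to $0$, so one cannot discard the denominator and must carry the whole occupation-time ratio, verifying that the logarithm generated by starting at distance $\alpha_N N$ from $\mathbf 0$ — of size $\log(1/\alpha_N)\lesssim\log\log N$ — is negligible against the logarithm $\sim\log N$ coming from the return kernel at $\mathbf 0$. This is exactly the balance that the hypotheses $\alpha_N\gg(\log N)^{-1/2}$ and $T_N\ll\alpha_N^2 N^2\log N$ are tailored to provide. One could alternatively invoke the monotonicity of the walk on $V_N$, in the spirit of the monotonicity technique of Section~\ref{sec6}, to replace $\max_{\eta\in\ms A_N}$ by the single configuration of $\ms A_N$ nearest to $Q_N$, but the argument above does not require that reduction.
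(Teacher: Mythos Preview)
Your argument is correct and takes a genuinely different route from the paper's proof.

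The paper first reduces the statement (via Lemma~\ref{s11}) to showing that the equilibrium potential $W(\eta)=\bb P^{\gamma}_\eta[H_{0}<H_{\breve Q_N^{\star}}]$ of the $\gamma$-enlargement, with $\gamma_N=T_N^{-1}$, vanishes uniformly on $\ms A_N$. This is done by a contradiction: on the one hand the Dirichlet principle and the test function $f_L$ of \eqref{66} give $\langle W,(-L_\star)W\rangle_{\pi_\star}\le C_0/(N^2\log N)$; on the other hand a coupling shows that $W$ is monotone on $\breve Q_N$, so if $W(\eta^N)\ge\epsilon$ then $W\ge\epsilon$ on the whole order-filter above $\eta^N$, whose $\pi_\star$-measure is at least $c\,\alpha_N^d$, and the $\gamma$-part of the Dirichlet form then yields $\langle W,(-L_\star)W\rangle_{\pi_\star}\ge c\,\gamma_N\epsilon^2\alpha_N^d$. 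Comparing the two bounds contradicts $\gamma_N^{-1}=T_N\ll\alpha_N^d N^d$ (resp.\ $\alpha_N^2 N^2\log N$).

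You instead use that the origin is the unique cut vertex, so $H_{Q_N}=H_{\{\mathbf 0\}}$ and, before hitting $\mathbf 0$, the dog-graph walk coincides with the product of $d$ independent one-dimensional reflected walks on $\{-N,\dots,0\}$; you then control the corner-hitting probability by the standard occupation-time ratio and one-dimensional heat-kernel bounds. The paper's approach illustrates the general ``monotonicity + capacity'' strategy of Section~\ref{sec6}.A, which transfers to other monotone models without a product structure; your approach is more elementary and self-contained, bypasses the enlargement machinery entirely, and makes transparent why the thresholds on $T_N$ and $\alpha_N$ are sharp (they exactly balance, in $d=2$, the $\log\log N$ coming from the arrival kernel against the $\log N$ coming from the return kernel at the corner).
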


\begin{proof}
In view of the definition of $\alpha_N$, we may assume that $T_N\gg
N^2$.  We present the arguments in dimension $2$, the case of higher
dimension being similar.  Fix a sequence $\eta^N \in \ms A_N$. Let
$\gamma_N = T^{-1}_N$ and denote by $\eta^\star(t)$ the
$\gamma$-enlargement of the process $\eta(t)$ on $V_N \cup V^\star_N$,
as defined in Section \ref{sec0}. Here, $V^\star_N$ represents a copy
of $V_N$, and the process $\eta^\star(t)$ jumps from $\eta$ to
$\eta^\star$ (and from $\eta^\star$ to $\eta$) at rate
$\gamma_N$. Denote by $\bb P^{\gamma}_{\eta}$ the probability measure
on the path space $D(\bb R_+, V_N \cup V^\star_N)$ induced by the
Markov process $\eta^\star(t)$ starting from $\eta$.

Let $W$ be the equilibrium potential $W (\eta) = \bb P^{\gamma}_{\eta}
[ H_{0} < H_{\breve{Q}^{\star}_N} ]$, where $0$ represents the
origin. In view of Lemma \ref{s11}, it is enough to show that
$W(\eta^N)$ vanishes as $N\uparrow\infty$. By the Dirichlet principle,
\begin{equation}
\label{67}
\< W , (-L_\star) W \>_{\pi_\star} 
\;=\;\Cap_{\star} (0 , \breve{Q}^{\star}_N) \;=\;
\inf_f \, \< f, (-L_\star) f\>_{\pi_\star} \;,
\end{equation}
where the infimum is carried over all functions $f$ which vanish on
$\breve{Q}^{\star}_N$ and which are equal to $1$ at the origin.  Using
the function $f_L$ introduced in \eqref{66}, we may show that the last
term is bounded by $C_0 (N^2 \log N)^{-1}$ for some finite constant
$C_0$. We used here the fact that $\gamma_N \ll (N \,\log N)^{-1}$.

Denote by $\prec$ the partial order of $\bb Z^d$ so that $\eta\prec
\xi$ if $\eta_j\le\xi_j$ for $1\le j\le d$.  A coupling argument shows
that the equilibrium potential $W$ is monotone on $\breve{Q}_N$: $W
(\eta) \le W(\xi)$ for $\eta\prec\xi$, $\eta$, $\xi\in \breve{Q}_N$.
Suppose that $W(\eta^N)$ does not vanish as $N\uparrow\infty$. In this
case there exists $\epsilon >0$ and a subsequence $N_j$, still denoted
by $N$, such that $W(\eta^N)\ge \epsilon$ for all $N$. Let $U_N =
\{\xi\in\breve{Q}_N : \eta^N \prec \xi\}$. By monotonicity of the
equilibrium potential, $W(\xi) \ge W(\eta^N)\ge \epsilon$ for all
$\xi\in U_N$. Therefore,
\begin{equation*}
\< W , (-L_\star) W \>_{\pi_\star} \;\ge\; \gamma_N \sum_{\xi\in U_N}
\pi_\star(\xi) W(\xi)^2 \;\ge\; c_0\, \gamma_N\, \epsilon^2
\,\alpha_N^2 
\end{equation*}
for some positive constant $c_0$. This contradicts the estimate
\eqref{67} because $\gamma_N \gg (\alpha^2_N\, N^2 \log N)^{-1}$.
\end{proof}

\smallskip\noindent\emph{Condition} ({\bf L4U}).  The proof of Lemma
\ref{s23} shows that condition ({\bf L4U}) is in force.

\begin{lemma}
\label{s27}
let $T_N$ be a sequence such that $T_N \ll \alpha^2_N \, N^2 \,\log N$
in dimension $2$, and $T_N \ll \alpha^d_N \, N^d$ in dimension $d\ge
3$. Then,
\begin{equation*}
\lim_{N\to\infty} \, \max_{\eta \in \ms A_N}\, \bb P^{\ms E}_{\eta} 
\big[ H_{\ms B_N} \le T_N \big] \;=\; 0\;. 
\end{equation*}
\end{lemma}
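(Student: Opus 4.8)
The plan is to mimic the proof of Lemma~\ref{s23}: reduce to showing that an equilibrium potential vanishes, and then combine the capacity bound \eqref{65} with the fact that the chain reflected at $\ms A_N$ equilibrates well before it escapes to $\ms B_N$. We may assume $T_N\gg N^2$, since the probability in the statement is non-decreasing in $T_N$ and the hypothesis on $T_N$ passes to any smaller sequence. Set $\gamma_N=T_N^{-1}$. Applying Lemma~\ref{s11} with the well $\ms A_N$ (so $\breve{\ms E}^1_N=\ms B_N$ and $\ms E^{\star,1}_N=\ms A^\star_N$), the statement reduces to a bound, uniform in $\eta\in\ms A_N$, on $\bb P^{\ms E}_\eta[H_{\ms B_N}\le T_N]$; equivalently, to showing that the equilibrium potential $\bb P^{\star,\gamma_N}_\eta[H_{\ms B_N}<H_{\ms A^\star_N}]$ of the $\gamma_N$-enlargement of the trace process tends to $0$ uniformly.

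I would record three inputs, with $C_0$ a finite constant. By \eqref{61} and \eqref{65}, $E_{\pi_{\ms A}}[R^{\ms E}(\eta,\ms B_N)]=\pi(\ms A_N)^{-1}\Cap_N(\ms A_N,\ms B_N)\le C_0(N^2\log N)^{-1}$ when $d=2$, and $\le C_0N^{-d}$ when $d\ge3$. By the Dirichlet principle applied to the test function $\mathbf 1\{\ms A_N\cup\ms A^\star_N\}$ in the enlarged trace process, $\Cap_\star(\ms A^\star_N,\ms B_N)\le\pi_\star(\ms A_N)\,E_{\pi_{\ms A}}[R^{\ms E}(\eta,\ms B_N)]\le C_0\,\Cap_N(\ms A_N,\ms B_N)$. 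Finally, by \eqref{70}, $T^{\rm mix}_{\mb r,\ms A}\le C_0N^2\ll T_N$.

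Started from the local equilibrium $\pi_{\ms A}$, Corollary~\ref{s19} (with $\nu_N=\pi_{\ms A}$, $\gamma=\gamma_N$, and $E_{\pi_{\ms E}}[(\pi_{\ms A}/\pi_{\ms E})^2]=\pi_{\ms E}(\ms A_N)^{-1}\le C_0$) together with the capacity bound just derived gives
\begin{equation*}
\bb P^{\ms E}_{\pi_{\ms A}}\big[H_{\ms B_N}\le T_N\big]^2\;\le\;C_0\,T_N\,\Cap_N(\ms A_N,\ms B_N)\;\le\;\frac{C_0\,T_N}{N^2\log N}
\end{equation*}
in dimension two (and $\le C_0\,T_N\,N^{-d}$ for $d\ge3$), which tends to $0$ precisely because $T_N\ll\alpha^2_N N^2\log N$ (resp. $T_N\ll\alpha^d_N N^d$). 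To replace $\pi_{\ms A}$ by an arbitrary $\eta\in\ms A_N$ I would repeat the coupling argument of the proofs of Theorem~\ref{s00} and Lemma~\ref{s15}: choosing $T^{\rm mix}_{\mb r,\ms A}\ll S_N\ll T_N$ and coupling, on the event $\{H_{\ms B_N}>S_N\}$, the trace process with the chain reflected at $\ms A_N$ up to time $S_N$, one obtains, up to terms of the same form,
\begin{equation*}
\bb P^{\ms E}_\eta\big[H_{\ms B_N}\le T_N\big]\;\le\;\bb P^{\ms E}_\eta\big[H_{\ms B_N}\le S_N\big]\;+\;\bb P^{\ms E}_{\pi_{\ms A}}\big[H_{\ms B_N}\le T_N\big]\;+\;\big\|\delta_\eta S^{\mb r,\ms A}(S_N)-\pi_{\ms A}\big\|_{\rm TV}\;,
\end{equation*}
where the last term is at most $(1/2)^{S_N/T^{\rm mix}_{\mb r,\ms A}}$.

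The delicate point — and the main obstacle — is the leftover term $\bb P^{\ms E}_\eta[H_{\ms B_N}\le S_N]$: the coupling only lowers the time scale, so the splitting has to be iterated along a decreasing sequence of scales $T_N\gg S_N\gg S_N^{(2)}\gg\cdots$, each still much larger than $T^{\rm mix}_{\mb r,\ms A}$, which is of order $N^2$. Every scale produced in this way is $\le T_N\ll\alpha^2_N N^2\log N$, so the equilibrium estimate above applies verbatim at each step; iterating down to a scale of order $N^2$, one is reduced to $\bb P^{\ms E}_\eta[H_{\ms B_N}\le C_0N^2]$, for which the monotone–equilibrium-potential argument of Lemma~\ref{s23} applies directly — a coupling yields monotonicity on $\breve Q_N$ of the relevant equilibrium potential, and a non-vanishing subsequential limit would contradict the Dirichlet estimate behind \eqref{65} (cf. \eqref{67}), exactly as there. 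The case $d\ge3$ is identical, with $N^2\log N$ and $\alpha^2_N$ replaced throughout by $N^d$ and $\alpha^d_N$.
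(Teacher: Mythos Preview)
Your argument has a genuine gap at the iteration step. The recursion
\[
\bb P^{\ms E}_\eta\big[H_{\ms B_N}\le T_N\big]\;\le\;\bb P^{\ms E}_\eta\big[H_{\ms B_N}\le S_N\big]\;+\;\bb P^{\ms E}_{\pi_{\ms A}}\big[H_{\ms B_N}\le T_N\big]\;+\;\big\|\delta_\eta S^{\mb r,\ms A}(S_N)-\pi_{\ms A}\big\|_{\rm TV}
\]
reproduces on the right-hand side a term of exactly the same type as the left, with $T_N$ replaced by $S_N$. Iterating brings the scale down, but the total-variation term forces every scale to stay $\gg T^{\rm mix}_{\mb r,\ms A}\asymp N^2$, so you never reach a level at which an independent estimate is available: you are using the decomposition of Lemma~\ref{s15} to prove its own hypothesis ({\bf L4U}). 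Your claim that at the bottom ``the monotone--equilibrium-potential argument of Lemma~\ref{s23} applies directly'' is not justified: that argument is for the \emph{original} walk $\eta(t)$ on $\breve Q_N$ and the hitting time of $Q_N$ (equivalently of the origin), and relies on the coordinate-wise monotone coupling of simple random walk on the cube. The equilibrium potential you need, $\eta\mapsto\bb P^{\star,\gamma}_\eta[H_{\ms B_N}<H_{\ms A^\star_N}]$, is for the \emph{trace} process on $\ms A_N\cup\ms B_N$, which has long-range jumps through the collapsed region $\Delta_N$ and is not defined on $\breve Q_N\setminus\ms A_N$; no monotonicity has been established for it.

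The paper avoids this circularity altogether. It first passes from the enlarged trace process to the enlarged \emph{original} process via the identity $\bb P^{\star,\gamma}_\eta[H_{\ms B_N}<H_{\ms A^\star_N}]=\bb P^{\gamma}_\eta[H_{\ms B_N}<H_{\ms A^\star_N}]$, then bounds $H_{\ms B_N}$ by $H_0$ and switches to the walk reflected at $\breve Q_N$. The remaining obstacle is that $\ms A^\star_N\subsetneq\breve Q_N^\star$: one can reach $\breve Q_N^\star$ before the origin (this is Lemma~\ref{s23}) yet still land in $(\breve Q_N\setminus\ms A_N)^\star$ rather than $\ms A_N^\star$. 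The paper handles this by decomposing $\breve Q_N\setminus\ms A_N$ into the thin slabs $\ms A^j_N=\{\eta_j>-\alpha_N N\}$ and showing, via a single mixing step for the walk reflected at $\breve Q_N$, that the probability of ringing in a slab is at most $\pi_{\breve Q}(\ms A^j_N)\asymp\alpha_N\to0$. No iteration is needed.
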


\begin{proof}
Consider the case of dimension $2$.  In view of the definition of
$\alpha_N$, we may assume that $T_N\gg N^2$. Let $\gamma_N =
T^{-1}_N$, and fix a sequence $\eta^N \in \ms A_N$. By the proof of
Lemma \ref{s23}, it is enough to show that $\bb P^{\star,
  \gamma}_{\eta^N} [ H_{\ms B_N} < H_{\ms A^\star_N} ]$ vanishes as
$N\uparrow\infty$, where $\bb P^{\star, \gamma}_{\eta}$ has been
introduced in Section \ref{sec0} just after the definition of
enlargements. Clearly,
\begin{equation*}
\bb P^{\star, \gamma}_{\eta^N} \big [ H_{\ms B_N} < H_{\ms A^\star_N}
\big ] \;=\; 
\bb P^{\gamma}_{\eta^N} \big [ H_{\ms B_N} < H_{\ms A^\star_N} \big ] 
\;\le\; 
\bb P^{\gamma}_{\eta^N} \big [ H_{0} < H_{\ms A^\star_N} \big ]\;, 
\end{equation*}
where $\bb P^{\gamma}_{\eta}$ is the probability measure introduced in
the proof of Lemma \ref{s23}. Denote by $\eta^{{\bf r},\breve{Q}} (t)$
the process $\eta(t)$ reflected at $\breve{Q}_N$ and by $\eta^{{\bf
    r}, \breve{Q}, \gamma} (t)$ the $\gamma$-enlargement of the
process $\eta^{{\bf r},\breve{Q}} (t)$ on $\breve{Q}_N \cup
\breve{Q}^\star_N$.  The last probability is clearly equal to $\bb
P^{{\bf r}, \breve{Q}, \gamma}_{\eta^N} [ H_{0} < H_{\ms A^\star_N}
]$, where $\bb P^{{\bf r}, \breve{Q}, \gamma}_{\eta}$ is the law of the
process $\eta^{{\bf r}, \breve{Q}, \gamma} (t)$ starting from $\eta$.

Let $\ms A^j_N = \{\eta\in \breve{Q}_N :
\eta_j > -\alpha_N\, N\}$, $j=1,2$, so that $\breve{Q}_N = \ms A_N \cup \ms
A^1_N \cup \ms A^2_N$ and
\begin{equation*}
\bb P^{{\bf r}, \breve{Q}, \gamma}_{\eta^N} 
\big [ H_{0} < H_{\ms A^\star_N} \big ] \;\le\;
\bb P^{{\bf r}, \breve{Q}, \gamma}_{\eta^N} 
\big [ H_{0} < H_{\breve{Q}^\star_N} \big ] \; +\;
\sum_{j=1}^2 \bb P^{{\bf r}, \breve{Q}, \gamma}_{\eta^N} 
\big [ H_{\ms A^{j,\star}_N} <  H_{\ms A^{\star}_N} \big ] \;.
\end{equation*}
We have shown in the proof of Lemma \ref{s23} that the first term on
the right hand side of the previous formula vanishes as
$N\uparrow\infty$. The other two are one-dimensional problems.

Let $W(\eta)$ be the equilibrium potential $\bb P^{{\bf r}, \breve{Q},
  \gamma}_{\eta} [ H_{\ms A^{1,\star}_N} < H_{\ms A^{\star}_N} ]$. We
claim that
\begin{equation*}
\lim_{N\to\infty} \max_{\eta\in \breve{Q}_N} \, W(\eta) \;=\; 0\;. 
\end{equation*}

Let $R_N$ be a sequence such that $N^2 \ll R_N \ll T_N$. With respect
to the measure $\bb P^{{\bf r}, \breve{Q}, \gamma}_{\eta}$,
$H_{\breve{Q}^\star_N}$ is a mean $T_N$ exponential time. Hence, $\bb
P^{{\bf r}, \breve{Q}, \gamma}_{\eta} [ H_{\breve{Q}^\star_N} < R_N]$
vanishes as $N\uparrow\infty$. It is therefore enough to show that
\begin{equation*}
  \lim_{N\to\infty} \bb P^{{\bf r}, \breve{Q}, \gamma}_{\eta} 
\big [ H_{\ms A^{1,\star}_N} < H_{\ms A^{\star}_N} \,,\, 
H_{\breve{Q}^\star_N} \ge R_N \big] \;=\; 0\;.
\end{equation*}
By the Markov property, the previous probability is equal to
\begin{equation*}
\bb E^{{\bf r}, \breve{Q}, \gamma}_{\eta} 
\Big [ \mb 1\{H_{\breve{Q}^\star_N} \ge R_N\} \,
\bb P^{{\bf r}, \breve{Q}, \gamma}_{\eta^{{\bf r},\breve{Q}}  (R_N)} 
\big [ H_{\ms A^{1,\star}_N} < H_{\ms  A^{\star}_N} \big] \, \Big] \;,
\end{equation*}
where $\eta^{{\bf r},\breve{Q}} (t)$ is the process $\eta(t)$
reflected at $\breve{Q}_N$. We bound last expectation by removing the
indicator of the set $H_{\breve{Q}^\star_N} \ge R_N$ and we estimate the
remaining term by
\begin{equation*}
\bb P^{{\bf r}, \breve{Q}, \gamma}_{\pi_{\breve{Q}}} \big [ H_{\ms A^{1,\star}_N} <
H_{\ms A^{\star}_N} \big] \;+\; \Vert \delta_\eta S^{{\bf
    r},\breve{Q}} (R_N) - \pi_{\breve{Q}} \Vert_{VT}\;,
\end{equation*}
where $\pi_{\breve{Q}}$ is the uniform measure on $\breve{Q}$ and
$S^{{\bf r},\breve{Q}} (t)$ the Markov semigroup of the process
$\eta^{{\bf r},\breve{Q}} (t)$. As $R_N \gg N^2$, which is the mixing
time of $\eta^{{\bf r},\breve{Q}} (t)$, the second term vanishes as
$N\uparrow\infty$, while the first term is the expectation of the
equilibrium potential $W$ with respect to the measure
$\pi_{\breve{Q}}$. If $L_{{\bf r},\breve{Q}}$ represents the generator
of the Markov process $\eta^{{\bf r},\breve{Q}} (t)$, we have that
$L_{{\bf r},\breve{Q}} W - \gamma W = - \gamma \mb 1\{\ms
A^{1}_N\}$. Taking the expectation with respect to $\pi_{\breve{Q}}$,
we conclude that $E_{\pi_{\breve{Q}}}[W] = \pi_{\breve{Q}} (\ms
A^{1}_N)$, which vanishes as $N\uparrow\infty$. This concludes the
proof of the lemma.
\end{proof}

In view of Lemma \ref{s26}, we have just shown that all assumptions of
Theorem \ref{s00} and Lemma \ref{s15} are in force. Moreover, by
\eqref{70} and Lemma \ref{s23}, the hypotheses of Lemma \ref{s25} are
fulfilled for $\ms D_N = \ms A_N$, $\ms F_N = \breve{Q}_N$ and $N^2
\ll T_N \ll \alpha^2_N \, (\log N) \, N^2$. Hence,

\begin{proposition}
\label{s24}
Consider the Markov process $\eta(t)$ on the dog graph. Assume that
the initial state $\nu_N$ is concentrated on $\ms A_N$. Then, the
time-rescaled order ${\mb X}^N_t = X^N_{\mf g_{\ms E}^{-1} t}$
converges to the Markov process on $\{1,2\}$ which starts from $1$ and
jumps from $x$ to $3-x$ at rate $1/2$. Moreover, in the time scale
$\mf g_{\ms E}^{-1}$ the time spent by the original process $\eta(t)$
on the set $\Delta_N = V_N \setminus \ms E_N$ is negligible.
\end{proposition}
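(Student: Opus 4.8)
The plan is to deduce Proposition~\ref{s24} from Theorem~\ref{s00}, Lemma~\ref{s15} and Lemma~\ref{s25}, once all of their hypotheses have been collected for the time scale $\theta_N = \mf g^{-1}_{\ms E}$; most of the estimates for the dog graph are already in place, so the remaining task is essentially to match time scales. First I would fix the combinatorics: the reflection $\eta\mapsto-\eta$ exchanges $\ms A_N$ and $\ms B_N$ and leaves both the dynamics and the uniform measure $\pi$ invariant, so $\pi_{\ms E}(\ms A_N)=\pi_{\ms E}(\ms B_N)=1/2$; hence $m(1)=m(2)=1/2$ and, by Lemma~\ref{s26}, condition~\eqref{69} holds automatically while the asymptotic rates are $r(1,2)=r(2,1)=1/2$. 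Next I would record the two-sided estimate $\theta_N=\mf g^{-1}_{\ms E}\asymp N^2\log N$ in dimension two (resp.\ $\asymp N^d$ in dimension $d\ge 3$): the lower bound on $\mf g_{\ms E}$ comes from Proposition~\ref{s05}, which gives $\mf g\le\mf g_{\ms E}$, together with \eqref{63}, and the upper bound from Lemma~\ref{s22} and the capacity estimate \eqref{65}. With this comparison, conditions \eqref{43} and \eqref{51}, already verified for $\theta_N=N^2\log N$ (resp.\ $N^d$), hold verbatim for $\theta_N=\mf g^{-1}_{\ms E}$, and condition ({\bf L1B}), checked above via Lemma~\ref{s22} and \eqref{70}, yields ({\bf L1}).

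It then remains to supply the mixing-type hypotheses. Since $(\log N)^{-1/2}\ll\alpha_N\ll 1$, one may choose a sequence $T_N$ with $N^2\ll T_N\ll\alpha^2_N N^2\log N$ in dimension two (resp.\ $N^2\ll T_N\ll\alpha^d_N N^d$ in dimension $d\ge 3$); by \eqref{70} this forces $\max_{x\in S}T^{\rm mix}_{\mb r,x}\ll T_N$, and by the previous paragraph $T_N\ll\theta_N$. Lemma~\ref{s27}, applied to $\ms A_N$ and, via the reflection symmetry, to $\ms B_N$, gives ({\bf L4U}) for both wells, and in particular ({\bf L4}) for the starting well $\ms A_N$ and any $\nu_N$ concentrated on $\ms A_N$. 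At this point Theorem~\ref{s00} gives convergence of the finite-dimensional distributions of ${\mb X}^N_t=X^N_{t\theta_N}$ under $\bb P^{\ms E}_{\nu_N}$ to those of the Markov chain on $\{1,2\}$ started at $1$ with jump rate $1/2$, while Lemma~\ref{s15} gives tightness of $\bb Q_{\nu_N}$; together these yield convergence of the order in $D(\bb R_+,\{1,2\})$.

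Finally, to control the time spent in $\Delta_N=V_N\setminus\ms E_N$, I would apply Lemma~\ref{s25} with $\ms D^1_N=\ms A_N$ and $\ms F^1_N=\breve Q_N$. Condition ({\bf L3}) holds because $\pi(\Delta_N)=1-\pi(\ms E_N)\to 0$ (as $\alpha_N\to 0$) whereas $\pi(\ms A_N)\to 1/2$. For ({\bf L4E}), note that the two halves $Q_N$ and $\breve Q_N$ of the dog graph meet only at the origin, so a walk started in $\ms A_N\subset\breve Q_N$ must hit $0$ before leaving $\breve Q_N$; hence $\bb P_\eta[H_{(\breve Q_N)^c}\le T_N]\le\bb P_\eta[H_{Q_N}\le T_N]$, and the right-hand side tends to $0$ uniformly over $\eta\in\ms A_N$ by Lemma~\ref{s23}, since the mixing time of the walk reflected at $\breve Q_N$ is of order $N^2\ll T_N$. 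Lemma~\ref{s25} then gives \eqref{58}, i.e.\ that in the scale $\mf g^{-1}_{\ms E}$ the process spends a negligible fraction of time in $\Delta_N$, completing the proof. I expect the only delicate point to be the two-sided comparison $\theta_N\asymp N^2\log N$ (resp.\ $N^d$), which is what allows the estimates established for the dog graph to be transported to the intrinsic scale $\theta_N=\mf g^{-1}_{\ms E}$; everything else is bookkeeping of results already proved.
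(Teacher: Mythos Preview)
Your proposal is correct and follows essentially the same route as the paper: verify the hypotheses of Theorem~\ref{s00} and Lemma~\ref{s15} via Lemma~\ref{s26}, and handle the time spent in $\Delta_N$ via Lemma~\ref{s25} with $\ms D^1_N=\ms A_N$, $\ms F^1_N=\breve Q_N$. Your write-up is in fact more explicit than the paper's, which states the conclusion in a single sentence; in particular, you spell out the two-sided comparison $\mf g_{\ms E}^{-1}\asymp N^2\log N$ (resp.\ $N^d$) needed to transport conditions \eqref{43}, \eqref{51} and $T_N\ll\theta_N$ from the explicit scale to $\theta_N=\mf g_{\ms E}^{-1}$, a point the paper leaves implicit.
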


As a last step, we replace in the previous statement the spectral gap
$\mf g_{\ms E}$ of the trace process by the spectral gap $\mf g$
of the original process.  Let $T_N$ be a sequence such that $N^2\ll
T_N \ll \alpha^2_N\, N^2 \,\log N$ in dimension $2$, and $N^2\ll T_N
\ll \alpha^d_N \, N^d$ in dimension $d\ge 3$. It follows from Lemma
\ref{s23} and from \eqref{70} that
\begin{equation*}
\begin{split}
& \lim_{N\to\infty} \, \min_{\eta \in \ms A_N}\, \bb P_{\eta} 
\big[ \eta (T_N) \in \breve{Q}_N \big] \;=\; 1\;,\\
&\quad 
\lim_{N\to\infty} \, \max_{\eta \in \ms A_N}\, \Vert \delta_{\eta} S (T_N) 
- \pi_{\breve{Q}_N} \Vert_{TV} \;=\; 0\;,
\end{split}
\end{equation*}
where $S(t)$ is the semigroup of the Markov process $\eta(t)$. These
estimates are the two ingredients needed in the proof of
\cite[Proposition 2.9]{clmst}, a result which states, among other
things, that there exists a mean zero eigenfunction $f_N$ of the
generator $L_N$ associated to the eigenvalue $\mf g$ such that
$\lim_N \Vert f_N \Vert_\infty = 1$. By this result and Proposition
\ref{s05}, $\lim_N (\mf g/\mf g_{\ms E})=1$.

\subsection{A polymer in the depinned phase \cite{cmt, clmst}.}
Fix $N\ge 1$ and denote by $E_N$ the set of all lattice paths starting
at $0$ and ending at $0$ after $2N$ steps:
\begin{equation*}
E_N\;=\; \{ \eta\in \bb Z^{2N+1}: \eta_{-N} = \eta_N = 0 \,,\,
\eta_{j+1}-\eta_j = \pm 1 \,,\, -N \le j < N \}\;.
\end{equation*}
Fix $0<\alpha<1$ and consider the dynamics on $E_N$ induced by the
generator $L_N$ defined by
\begin{equation*}
(L_N f)(\eta) \;=\; \sum_{j=-N+1}^{N-1} c_{j,+} (\eta) [f(\eta^{j,+})
- f(\eta)] \;+\; \sum_{j=-N+1}^{N-1} c_{j,-} (\eta) [f(\eta^{j,-})
- f(\eta)]\;,
\end{equation*}
for every function $f: E_N\to\bb R$. In this formula $\eta^{j,\pm}$
represents the configuration which is equal to $\eta$ at every site
$k\not = j$ and which is equal to $\eta_j \pm 2$ at site $j$. The jump
rate $c_{j,+} (\eta)$ vanishes at configurations $\eta$ which do not
satisfy the condition $\eta_{j-1} = \eta_{j+1} = \eta_j +1$, and 
it is given by
\begin{equation*}
c_{j,+} (\eta) \;=\; 
\begin{cases}
1/2 & \text{if $\eta_{j-1} = \eta_{j+1} \not = \pm 
  1$,} \\
1/[(1+\alpha)] & \text{if $\eta_{j-1} = \eta_{j+1} = 
  1$,} \\
\alpha/[(1+\alpha)] & \text{if $\eta_{j-1} = \eta_{j+1} = -
  1$} 
\end{cases}
\end{equation*}
for configuration which fulfill the condition $\eta_{j-1} = \eta_{j+1}
= \eta_j +1$. Let $-\eta$ stand for the configuration $\eta$ reflected
at the origin, $(-\eta)_j = - \eta_j$, $-N\le j\le N$. The rates
$c_{j,-}(\eta)$ are given by $c_{j,-}(\eta) = c_{j,+}(-\eta)$.

Denote by $\Sigma(\eta)$ the number of zeros in the path $\eta$,
$\Sigma (\eta) = \sum_{-N\le j\le N} \mb 1\{\eta_j=0\}$. The
probability measure $\pi_N$ on $E_N$ defined by $\pi_N(\eta) =
Z^{-1}_{2N} \alpha^{\Sigma(\eta)}$, where $Z_{2N}$ is a normalizing
constant, is easily seen to be reversible for the dynamics generated
by $L_N$.

By \cite[Theorem 3.5]{cmt}, the spectral gap $\mf g$ is bounded
above by $C(\alpha) (\log N)^8 / N^{5/2}$ for some finite constant
$C(\alpha)$. Following \cite{clmst}, let $\ms E^1_N$ be the set of
configurations in $E_N$ such that $\eta_j>0$ for all $-(N - \ell) < j
< (N-\ell)$, where $\ell=\ell_N$ is a sequence such that $1\ll \ell_N
\ll N$, and let $\ms E^2_N = \{\eta\in E_N : -\eta \in \ms E^1_N\}$,
$\Delta_N = E_N \setminus (\ms E^1_N \cup \ms E^2_N)$. By equation
(2.27) in \cite{clmst}, $\pi(\ms E^1_N) = \pi(\ms E^1_N) = (1/2) +
O(\ell^{-1/2})$. Moreover, taking $\ell_N = (\log N)^{1/4}$, by
\cite[Proposition 2.6]{clmst}, for every $\epsilon >0$, there exists
$N_0$ such that for all $N\ge N_0$, $\mf g_{{\mb r}, 1} =
\mf g_{{\mb r}, 2} \ge N^{-(2+\epsilon)}$. In conclusion, choosing
$\epsilon$ small enough and $\ell_N = (\log N)^{1/4}$,
\begin{equation*}
\mf g \;\ll\; \min\big\{\mf g_{{\mb r}, 1} \,,\,
\mf g_{{\mb r}, 2} \big\}
\end{equation*}
for all $N$ large enough, which proves that condition ({\bf L1B}) is
in force.

By \cite[Proposition 2.9]{clmst}, there exists an eigenfunction $f$ of
the generator $L_N$ such that $E_{\pi}[f]=0$, $E_{\pi}[f^2]=1$, $L_N f
= \mf g f$ and $\Vert f\Vert_{\infty} = 1 + o_N(1)$ where $\Vert
f\Vert_{\infty}$ represents the sup norm of $f$ and $o_N(1)$ an
expression which vanishes as $N\uparrow\infty$. Therefore, since
$\pi(\Delta_N) \to 0$, by Proposition \ref{s05}, $\mf g/\mf g_{\ms E}$
converges to $1$ as $N\uparrow\infty$. 

Let $\nu_N$ be a sequence of probability measures concentrated on $\ms
E^1_N$ and satisfying condition \eqref{39}. For example, one may
define $\nu_N (\,\cdot\,)$ as $\pi(\,\cdot\,|\, \ms F\,)$, where $\ms
F$ is a subset of $\ms E^1_N$ such that $\liminf_{N\to\infty} \pi(\ms
F) \ge c_0$ for some positive constant $c_0$.  Define the trace
process $\eta^{\ms E} (t)$ and the order $X^N_t$ as in Section
\ref{sec0}.  By Proposition \ref{s05} and Lemma \ref{s26}, and in view
of the previous remarks, the time-rescaled process ${\mb X}^N_t =
X^N_{t/\mf g}$ converges to a Markov process on $\{1,2\}$ which starts
from $1$ and jumps from $x$ to $3-x$ at rate $1/2$. Moreover, by Lemma
\ref{s09}, the time spent by the process $\eta(t)$ on the time scale
$\mf g^{-1}$ outside the set $\ms E_N$ is negligible.

The difference between this result, derived from a general statement,
and Theorems 1.3 and 1.5 in \cite{clmst} is that we require in Theorem
\ref{s02} the initial state to be close to the stationary state of the
reflected process in one of the wells, while \cite{clmst} allows the
process to start from any state in one of the wells. This strong
assumption on the initial condition permits to consider larger wells
and to have an explicit description of these wells. To prove tunneling
for a process starting from a state, one needs to show that the mixing
conditions ({\bf L4U}) are in force.

\medskip\noindent{\bf Acknowledgments}. The authors wish to thank
A. Gaudilli\`ere, M. Jara, H. Lacoin, M. Loulakis and A. Teixeira for
stimulating discussions.

\end{document}